\documentclass[a4paper]{scrartcl}
\usepackage[T1]{fontenc}
\usepackage[english]{babel}
\usepackage[latin1]{inputenc}
\usepackage{amsmath}
\usepackage{amsthm,amssymb}
\usepackage{csquotes}
\usepackage{amssymb}
\usepackage{graphicx}
\usepackage{color, pgf}
\usepackage{hyperref}
\usepackage{authblk}
\newtheorem{thm}{Theorem}[subsection]

\newtheorem{cor}[thm]{Corollary}

\newtheorem{prop}[thm]{Proposition}
\newtheorem{theorem}{Theorem}[section]
\newtheorem{definition}[theorem]{Definition}
\newtheorem{lemma}[theorem]{Lemma}
\newtheorem{corollary}[theorem]{Corollary}
\newtheorem{remark}[theorem]{Remark}
\newtheorem{proposition}[theorem]{Proposition}
\begin{document}
\title{Well-posedness of renormalized solutions for a stochastic $p$-Laplace equation with $L^1$-initial data}
\author{Niklas Sapountzoglou, Aleksandra Zimmermann}
\affil{Faculty of Mathematics, University of Duisburg-Essen, Thea-Leymann-Strasse 9, 45127 Essen, Germany\\
E-mail: niklas.sapountzoglou@stud.uni-due.de, aleksandra.zimmermann@uni-due.de}
\maketitle
\begin{abstract}
We consider a $p$-Laplace evolution problem with stochastic forcing on a bounded domain $D\subset\mathbb{R}^d$ with homogeneous Dirichlet boundary conditions for $1<p<\infty$. The additive noise term is given by a stochastic integral in the sense of It\^{o}. The technical difficulties arise from the merely integrable random initial data $u_0$ under consideration. Due to the poor regularity of the initial data, estimates in $W^{1,p}_0(D)$ are available with respect to truncations of the solution only and therefore well-posedness results have to be formulated in the sense of generalized solutions. We extend the notion of renormalized solution for this type of SPDEs, show well-posedness in this setting and study the Markov properties of solutions.
\end{abstract}
\section{Introduction}
\subsection{Motivation of the study}
We are interested in the study of well-posedness for a $p$-Laplace evolution problem with stochastic forcing on a bounded domain $D\subset\mathbb{R}^d$ with homogeneous Dirichlet boundary conditions for $1<p<\infty$. For $p=2$, we are in the case of the classical Laplace operator, for arbitrary $1<p<\infty$, $u\mapsto-\operatorname{div}\,(|\nabla u|^{p-2}\nabla u)$ is a monotonone operator on the Sobolev space $W^{1,p}_0(D)$ that is singular for $p<2$ and degenerate for $p>2$. Evolution equations of $p$-Laplace type may appear as continuity equations in the study of gases flowing in pipes of uniform cross sectional areas and in models of filtration of an incompressible fluid through a porous medium (see \cite{BB}, \cite{DTH}): In the case of a turbulent regime, a nonlinear version of the Darcy law of $p$-power law type for $1<p<2$ is more appropriate (see \cite{DTH}). Turbulence is often associated with the presence of randomness (see \cite{BFH} and the references therein). Adding random influences to the model, we also take uncertainties and multiscale interactions into account. Randomness may be introduced as random external force by adding an It\^{o} integral on the right-hand side of the equation and by considering random initial values. Consequently, the equation becomes a stochastic partial differential equation (SPDE) and the solution is then a stochastic process.\\
For square-integrable initial data $u_0$, the stochastic $p$-Laplace evolution problem is well-posed (see, e.g. \cite{EP}, \cite{WLMR}). In this contribution, we focus on more general, merely integrable random initial data. There has been an extensive study of the corresponding deterministic problem and its generalizations (see, e.g., \cite{DBFMHR}, \cite{DB}, \cite{DB2}) and from these results it is well known that the deterministic $p$-Laplace evolution problem is not well-posed in the variational setting for initial data in $L^1$ and $1<p<d$, were $d\in\mathbb{N}$ is the space dimension. For this reason, the problem is formulated in the framework of renormalized solutions. The notion of renormalization summarizes different strategies to get rid of infinities (see \cite{DeLM}) that may appear in physical models. It has been introduced to partial differential equations by Di Perna and Lions in the study of Boltzmann equation (see \cite{DL}) and then extended to many elliptic and parabolic problems (see, e.g., \cite{PBLBTGRGMPJLV}, \cite{DB2}, \cite{BR} and the references therein). Properties of renormalized solutions for the continuity equation of viscous compressible fluids have been studied in \cite{EF}. The basic idea of the classical renormalized formulation for PDE is to use an appropriate class of nonlinear functions of the solution as test functions in the equation. For SPDEs, this concept has been applied for stochastic transport equations in \cite{AF}, \cite{CO} and for the Boltzmann equation with stochastic kinetic transport in \cite{PS}. For many physically relevant singular SPDEs, a slightly different notion of renormalization has recently been developed (see \cite{GIP2015}, \cite{Hai2014} and the references therein). For these cases, renormalized solutions may be obtained as limits of classical solutions to regularized problems with addition of diverging correction terms. These counterterms arise from a renormalization group which is defined in terms of an associated regularity structure.\\
In this contribution, it is our aim to extend the notion of renormalized solutions in the sense of \cite{DB2} for the stochastic $p$-Laplace evolution problem with random initial data in $L^1$ and to show well-posedness in this framework. 

For a quasilinear, degenerate hyperbolic-parabolic SPDE with $L^1$ random initial data, the well-posedness and regularity of kinetic solutions has been studied in \cite{BGMH}, but, to the best of our knowledge, these results do not apply in our situation. 

\subsection{Statement of the problem}
Let $(\Omega, \mathcal{F}, P,(\mathcal{F}_t)_{t \in [0,T]}, (\beta_t)_{t\in [0,T]})$ be a stochastic basis with a complete, countably generated probability space $(\Omega, \mathcal{F}, P)$, a filtration $(\mathcal{F}_t)_{t \in [0,T]}\subset \mathcal{F}$ satisfying the usual assumptions and a real valued, $\mathcal{F}_t$-Brownian motion $(\beta_t)_{t\in [0,T]}$. Let  $D \subset \mathbb{R}^d$ be a bounded Lipschitz domain, $T>0$, $Q_T=(0,T) \times D$ and $p>1$. Furthermore, let $u_0: \Omega \to L^1(D)$ be $\mathcal{F}_0$-measurable and $\Phi \in L^2(\Omega; \mathcal{C}([0,T];L^2(D)))$ be predictable.\\
We are interested in well-posedness to the following stochastic $p$-Laplace evolution problem
\begin{align}\label{1}
du - \textnormal{div}\,(|\nabla u|^{p-2} \nabla u)\,dt &= \Phi ~d\beta ~~~&\textnormal{in}~ \Omega \times Q_T, \notag\\
u&=0 ~~~&\textnormal{on}~ \Omega \times (0,T) \times \partial D, \\
u(0, \cdot)&=u_0 ~~~&\in ~L^1(\Omega \times D). \notag
\end{align}
Due to the poor regularity of the initial data $u_0$, a-priori estimates on $\nabla u$ are not available and therefore the well-posedness result has to be formulated in the sense of a generalized solution, more precisely in the framework of renormalized solutions.
To show this we first show in Section 2 that there exists a strong solution to \eqref{1} in the case where the initial value $u_0$ is an element of $L^2(\Omega \times D)$. After that, we establish a comparison principle that shows that a sequence of strong solutions is a Cauchy sequence in $L^1(\Omega;\mathcal{C}([0,T]; L^1(D)))$ whenever the sequence of initial values is a Cauchy sequence in $L^1(\Omega \times D)$. In Section 4 we prove a version of the It\^{o} formula which makes it possible to define renormalized solutions to equation \eqref{1}. Section 5 contains the definition of renormalized solutions to \eqref{1}, in Section 6 we show the existence of such a solution and Section 7 contains the uniqueness result, which is based on an $L^1$-contraction principle. Finally, in Section 8 we study the Markov properties of such a solution.
\section{Strong solutions}
\begin{theorem}\label{Theorem 2.1}
Let the conditions in the introduction be satisfied. Furthermore, let $u_0 \in L^2(\Omega \times D)$ be $\mathcal{F}_0$-measurable. Then there exists a unique strong solution to (\ref{1}), i.e., an $\mathcal{F}_t$-adapted stochastic process $u:\Omega\times [0,T]\to L^2(D)$ such that $u \in L^p(\Omega; L^p(0,T; W_0^{1,p}(D))) \cap L^2(\Omega; \mathcal{C}([0,T];L^2(D)))$, $u(0, \cdot)=u_0$ in $L^2(\Omega \times D)$ and
\begin{align*}
u(t) - u_0 - \int_0^t \textnormal{div}\,(|\nabla u|^{p-2} \nabla u)\, ds = \int_0^t \Phi \,d\beta
\end{align*}
in $W^{-1,p'}(D) + L^2(D)$ for all $t \in [0,T]$ and a.s. in $\Omega$.
\end{theorem}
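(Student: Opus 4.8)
The plan is to treat \eqref{1} within the variational framework for monotone SPDEs, via a Galerkin approximation combined with Minty's monotonicity trick. First I would fix the functional setting. Since for small $p$ the embedding $W^{1,p}_0(D)\hookrightarrow L^2(D)$ may fail, I would not work with the naive Gelfand triple but instead set $V:=W^{1,p}_0(D)\cap L^2(D)$ equipped with $\|v\|_V:=\|v\|_{W^{1,p}_0(D)}+\|v\|_{L^2(D)}$, $H:=L^2(D)$ and $V^*=W^{-1,p'}(D)+L^2(D)$, which yields a Gelfand triple $V\hookrightarrow H\hookrightarrow V^*$ valid for every $1<p<\infty$ and which explains the sum space appearing in the statement. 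On this triple I would record the standard properties of $A(v):=-\operatorname{div}(|\nabla v|^{p-2}\nabla v)$: hemicontinuity, monotonicity $\langle A(v)-A(w),v-w\rangle\ge 0$, coercivity $\langle A(v),v\rangle=\|\nabla v\|_{L^p(D)}^p$, and the growth bound $\|A(v)\|_{V^*}\le C(1+\|v\|_V^{p-1})$.

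Next I would set up the Galerkin scheme. Choosing a system $(e_k)_{k\in\mathbb{N}}\subset V$ whose linear span is dense in $V$ and orthonormal in $H$, I would project the equation onto $V_n:=\mathrm{span}(e_1,\dots,e_n)$ to obtain the finite-dimensional It\^o SDE $du_n=-P_nA(u_n)\,dt+P_n\Phi\,d\beta$ with $u_n(0)=P_nu_0$. Existence and uniqueness of $u_n$ on $[0,T]$ follow from classical SDE theory, since the drift is continuous and monotone and the additive noise $P_n\Phi$ is adapted and square integrable. Applying the It\^o formula to $\|u_n(t)\|_H^2$, using coercivity and a Burkholder--Davis--Gundy estimate for the martingale part, I would derive the a priori bound
\begin{align*}
\mathbb{E}\sup_{t\in[0,T]}\|u_n(t)\|_{H}^2+\mathbb{E}\int_0^T\|\nabla u_n\|_{L^p(D)}^p\,dt\le C\Big(\mathbb{E}\|u_0\|_H^2+\mathbb{E}\|\Phi\|_{L^2(0,T;H)}^2\Big),
\end{align*}
uniformly in $n$, together with a bound on $A(u_n)$ in $L^{p'}(\Omega\times(0,T);V^*)$ via the growth condition.

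These estimates would let me extract a subsequence with $u_n\rightharpoonup u$ in $L^p(\Omega;L^p(0,T;V))$ and weak-$*$ in $L^2(\Omega;L^\infty(0,T;H))$, and $A(u_n)\rightharpoonup\chi$ in $L^{p'}(\Omega\times(0,T);V^*)$. Passing to the limit in the linear approximate equation, using that the stochastic integral is continuous and linear in its integrand, gives $u(t)-u_0-\int_0^t\chi\,ds=\int_0^t\Phi\,d\beta$ in $V^*$. The heart of the argument, and the step I expect to be hardest, is the identification $\chi=A(u)$ by a stochastic version of Minty's trick: I would apply the It\^o formula to $\|u_n\|_H^2$ and to the limiting $\|u\|_H^2$, take expectations, and combine the resulting energy identities with monotonicity, namely $\mathbb{E}\int_0^T\langle A(u_n)-A(v),u_n-v\rangle\,dt\ge 0$ for arbitrary $v$. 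The delicate points are the weak lower semicontinuity of $\mathbb{E}\|u_n(T)\|_H^2$ and the convergence of the cross terms; here the additive structure of the noise is decisive, since the quadratic-variation contribution $\mathbb{E}\int_0^T\|P_n\Phi\|_H^2\,dt\to\mathbb{E}\int_0^T\|\Phi\|_H^2\,dt$ matches on both sides, so the martingale terms do not obstruct the limit passage.

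Finally, uniqueness, together with the identity $u(0)=u_0$ and the $H$-continuity of $u$, would follow directly from monotonicity: for two solutions $u,v$ the additive noise cancels in the It\^o expansion of $\|u-v\|_H^2$, leaving $\mathbb{E}\|u(t)-v(t)\|_H^2\le \mathbb{E}\|u(0)-v(0)\|_H^2 - 2\mathbb{E}\int_0^t\langle A(u)-A(v),u-v\rangle\,ds\le 0$, whence $u=v$. Alternatively, once the conditions above are verified on the triple $V\hookrightarrow H\hookrightarrow V^*$, the whole statement can be obtained by directly invoking the abstract well-posedness theory for variational SPDEs with monotone coefficients, as in \cite{WLMR}.
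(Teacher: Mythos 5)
Your proposal is correct, and its closing remark (verify the structural conditions on the Gelfand triple and invoke the abstract variational theory, as in \cite{WLMR}) is in fact exactly what the paper does: its entire proof consists of choosing the triple and checking the hypotheses (A1)--(A5) of \cite{NVKBLR}, Chapter II, citing Theorem 2.1 and Corollary 2.1 there for existence and Theorem 3.2 for uniqueness. The main body of your proposal instead unfolds the proof of that abstract theorem -- Galerkin projection, It\^{o} energy estimate with Burkholder--Davis--Gundy, weak compactness, and the stochastic Minty argument -- so it is a self-contained reconstruction rather than a citation. Both arguments hinge on the same two observations, and you identified them correctly: first, the choice $V=W^{1,p}_0(D)\cap L^2(D)$ to repair the failure of $W^{1,p}_0(D)\hookrightarrow L^2(D)$ for small $p$ (the paper makes the same choice for $1<p<2$ and takes $V=W^{1,p}_0(D)$ for $p\geq 2$, where the embedding holds on the bounded domain); second, the fact that $\langle A(v),v\rangle=\|\nabla v\|_{L^p(D)}^p$ controls only the gradient part of the $V$-norm, so the $L^2$-part must be absorbed elsewhere -- in your scheme by the $\mathbb{E}\sup_t\|u_n(t)\|_H^2$ term on the left of the energy estimate, in the paper by the allowance of a $K\|v\|_H^2$ term in condition (A3), verified via the elementary inequality $x^p\leq 1+x^2$. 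What your route buys is transparency: the role of the additive noise (cancellation in the uniqueness computation, matching quadratic-variation terms in the Minty limit) is made explicit. What the paper's route buys is brevity and rigor on precisely the points your sketch asserts but does not prove: the adaptedness of the weak limit, the pathwise continuity $u\in\mathcal{C}([0,T];L^2(D))$ (which requires the It\^{o} formula for $\|\cdot\|_H^2$ in the variational setting, a stochastic analogue of the Lions--Magenes lemma), and the global solvability of the finite-dimensional monotone SDEs with superlinear drift when $p>2$; all of these are contained in the cited abstract theory, and a complete standalone write-up of your argument would have to supply them.
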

\begin{remark}\label{Remark 2.2}
Since we know from all terms except the term $\int_0^t \textnormal{div}\,(|\nabla u|^{p-2} \nabla u) ~ds $ that these terms are elements of $L^2(D)$ for all $t \in [0,T]$ and a.s. in $\Omega$ it follows that $\int_0^t \textnormal{div} (|\nabla u|^{p-2} \nabla u) ~ds  \in L^2(D)$ for all $t \in [0,T]$ and a.s. in $\Omega$. Therefore this equation is an equation in $L^2(D)$.
\end{remark}
\begin{proof}
The existence result is a consequence of \cite{NVKBLR}, Chapter II, Theorem 2.1 and Corollary 2.1. We only have to check the assumptions of this theorem. Following the notations therein, we set $V= W_0^{1,p}(D) \cap L^2(D)$ in the case $1<p<2$ and $V=W_0^{1,p}(D)$ in the case $p \geq 2$, $H=L^2(D)$, $E=\mathbb{R}$, $A: V \to V^*$, $A(u) = - \textnormal{div}\,(|\nabla u|^{p-2} \nabla u)$, $B=\Phi$, $ f(t,\omega) = 2 + \Vert B(t,\omega) \Vert_2^2$ for almost each $(t,\omega) \in (0,T) \times \Omega$ and $z=0$. Then we have $\mathcal{L}_Q(E;H)= \mathcal{L}_2(\mathbb{R}, L^2(D)) = L^2(D)$.\\
We remark that $A$ does not depend on $(t,\omega) \in [0,T] \times \Omega$ and that $B$ does not depend on $u \in V$. Obviously, conditions (A1), (A2) and (A5) in \cite{NVKBLR} are satisfied. Moreover, in the case $p \geq 2$ the validity of conditions (A3) and (A4) is well known in the theory of monotone operators. Therefore we only consider the case $1<p<2$.\\
In this case we check condition (A3). Using the norms
\begin{align*}
\Vert v \Vert_V &:= \bigg( \Vert v \Vert_{W_0^{1,p}(D)}^p + \Vert v \Vert_2^p \bigg)^{\frac{1}{p}}, \\
\Vert v \Vert_{W_0^{1,p}(D)}&:= \Vert \nabla v \Vert_{L^p(D)^d}
\end{align*}
we have
\begin{align*}
|B|_Q^2 + 2 \Vert v \Vert_V^p &= \Vert B \Vert_2^2 + 2 \Vert v \Vert_V^p \\
&= f-2 + 2 \Vert v \Vert_V^p\\
&= f - 2 + 2 \Vert v \Vert_{W_0^{1,p}(D)}^p + 2 \Vert v \Vert_2^p \\
&= f-2 + 2 \Vert v \Vert_2^p +  2\langle Av,v \rangle_{V^*,V}\\
&\leq f + \Vert v \Vert_2^2  + 2\langle Av,v \rangle_{V^*,V}
\end{align*}
for all $v \in V$ since $x^p \leq 1 + x^2$ for all $x \geq 0$. This proves condition (A3) for $\alpha=K=2$. \\
Now we check condition (A4). We estimate
\begin{align*}
\Vert A(u) \Vert_{V^*} \leq \Vert A(u) \Vert_{W^{-1,p'}(D)} \leq \Vert \nabla u \Vert_{L^p(D)^d}^{p-1} \leq \Vert u \Vert_V^{p-1}.
\end{align*}
The uniqueness is a consequence of \cite{NVKBLR}, Chapter II, Theorem 3.2, which applies under the same assumptions.
\end{proof}
\section{Comparison principle}
\begin{theorem}\label{Theorem 3.1}
Let $u_0, v_0 \in L^2(\Omega \times D)$ and $u$ and $v$ strong solutions to the problem \eqref{1} with initial value $u_0$ and $v_0$, respectively. Then
\begin{align*}
\sup\limits_{t \in [0,T]} \int_D |u(t) - v(t)| \,dx \leq \int_D |u_0 - v_0| \,dx
\end{align*}
a.s. in $\Omega$.
\end{theorem}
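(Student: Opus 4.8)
The plan is to exploit the crucial structural feature that the noise in \eqref{1} is additive and \emph{identical} for both solutions: subtracting the two integral equations, the stochastic integrals $\int_0^t \Phi\, d\beta$ cancel exactly. Writing $w := u - v$ and $w_0 := u_0 - v_0$, the difference therefore satisfies, pathwise and for all $t \in [0,T]$,
\begin{align*}
w(t) = w_0 + \int_0^t \big( \operatorname{div}(|\nabla u|^{p-2}\nabla u) - \operatorname{div}(|\nabla v|^{p-2}\nabla v)\big)\, ds
\end{align*}
in $W^{-1,p'}(D)$, with no martingale part. Thus for almost every fixed $\omega \in \Omega$ we are reduced to a purely deterministic evolution equation: $w(\cdot,\omega) \in L^p(0,T; W_0^{1,p}(D)) \cap \mathcal{C}([0,T];L^2(D))$ is absolutely continuous with values in $W^{-1,p'}(D)$ and weak derivative $\partial_t w = \operatorname{div}(|\nabla u|^{p-2}\nabla u) - \operatorname{div}(|\nabla v|^{p-2}\nabla v) \in L^{p'}(0,T; W^{-1,p'}(D))$, where one uses that $(p-1)p'=p$ so that $|\nabla u|^{p-2}\nabla u \in L^{p'}(Q_T)^d$. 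This means the entire argument can be carried out $\omega$ by $\omega$ with deterministic tools, without appealing to an It\^o formula (which is only developed later, in Section 4).

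Next, to reach the $L^1$-norm I would smooth the absolute value. Fix $\delta > 0$ and set $j_\delta(r) := \sqrt{r^2 + \delta^2} - \delta$, so that $j_\delta$ is smooth and convex, $j_\delta'(r) = r/\sqrt{r^2+\delta^2}$ satisfies $|j_\delta'| \le 1$, $j_\delta'(0) = 0$ and $j_\delta''(r) = \delta^2 (r^2+\delta^2)^{-3/2} \ge 0$, while $j_\delta(r) \nearrow |r|$ as $\delta \downarrow 0$. Since $j_\delta'$ is Lipschitz and vanishes at $0$, we have $j_\delta'(w(t)) \in W_0^{1,p}(D)$ with $\nabla(j_\delta'(w)) = j_\delta''(w)\nabla w$, so $j_\delta'(w) \in L^p(0,T;W_0^{1,p}(D))$ is an admissible test function. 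Applying the chain rule for the convex function $j_\delta$ to the pairing of $\partial_t w$ with $j_\delta'(w)$ and then integrating by parts in space yields, for all $t \in [0,T]$,
\begin{align*}
\int_D j_\delta(w(t))\, dx - \int_D j_\delta(w_0)\, dx = - \int_0^t \int_D \big(|\nabla u|^{p-2}\nabla u - |\nabla v|^{p-2}\nabla v\big)\cdot(\nabla u - \nabla v)\, j_\delta''(w)\, dx\, ds.
\end{align*}
The integrand on the right is nonnegative: $j_\delta'' \ge 0$ by convexity, and monotonicity of the $p$-Laplace operator gives $(|\nabla u|^{p-2}\nabla u - |\nabla v|^{p-2}\nabla v)\cdot(\nabla u - \nabla v) \ge 0$ pointwise. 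Hence the right-hand side is $\le 0$ and $\int_D j_\delta(w(t))\, dx \le \int_D j_\delta(w_0)\, dx$ for every $t$, a.s.

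Finally, letting $\delta \downarrow 0$ and using monotone convergence on both sides gives $\int_D |w(t)|\, dx \le \int_D |w_0|\, dx$ for all $t \in [0,T]$ and a.s. in $\Omega$, and taking the supremum over $t$ yields the claim. I expect the main obstacle to be the rigorous justification of the nonlinear chain rule, i.e. the identity $\frac{d}{dt}\int_D j_\delta(w)\, dx = \langle \partial_t w, j_\delta'(w)\rangle_{W^{-1,p'},W_0^{1,p}}$ together with the spatial integration by parts: this requires carefully combining the weak time-regularity of $w$ in $W^{-1,p'}(D)$ with the nonlinearity $j_\delta$, which is the standard but delicate point settled by a time-regularization (Steklov averaging, or an Alt--Luckhaus type integration-by-parts lemma) before passing to the limit. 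One must also confirm that the pathwise exceptional null set can be chosen independently of $t$, which follows from the continuity $w \in \mathcal{C}([0,T];L^2(D))$ a.s.
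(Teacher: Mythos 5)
Your proof is correct and its skeleton coincides with the paper's: subtract the two equations so that the additive noise cancels, approximate the absolute value by smooth convex functions, use monotonicity of the $p$-Laplacian to get a sign, and pass to the limit. The difference lies in the implementation. The paper stays inside the stochastic framework and invokes an It\^{o} formula applied to an approximation of the absolute value, citing Proposition 5 of \cite{VZ18}; you instead observe that, since the martingale parts cancel exactly, the difference $w=u-v$ solves a pathwise deterministic evolution equation, so the whole argument can be run $\omega$ by $\omega$ with the classical chain rule for $w\in L^p(0,T;V)$, $\partial_t w\in L^{p'}(0,T;V^*)$ (Alt--Luckhaus/Steklov averaging). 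This buys a self-contained proof that needs no stochastic calculus at all and makes transparent why no It\^{o} correction term can appear; the paper's route is shorter on the page but outsources precisely the regularization step you identify as the delicate point. One detail you should repair: for $1<p<\frac{2d}{d+2}$ one has $W^{1,p}_0(D)\not\hookrightarrow L^2(D)$, hence $L^2(D)\not\hookrightarrow W^{-1,p'}(D)$, so the difference equation cannot be read as an identity in $W^{-1,p'}(D)$ (its terms $w(t)$ and $w_0$ lie only in $L^2(D)$); it should be stated in $W^{-1,p'}(D)+L^2(D)$, and the Gelfand triple for your chain rule must be $V=W^{1,p}_0(D)\cap L^2(D)\hookrightarrow L^2(D)\hookrightarrow V^*$, exactly the choice made in Theorem \ref{Theorem 2.1} for $1<p<2$. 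With that cosmetic adjustment the argument goes through unchanged: $j_\delta'(w)\in V$ because $|j_\delta'(r)|\le\min(1,|r|/\delta)$ and $j_\delta'$ is Lipschitz with $j_\delta'(0)=0$, the pairing $\langle\partial_t w, j_\delta'(w)\rangle_{V^*,V}$ reduces to the $W^{-1,p'}$--$W^{1,p}_0$ duality since $\partial_t w\in L^{p'}(0,T;W^{-1,p'}(D))$, and monotone convergence finishes the proof.
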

\begin{proof}
We subtract the equations for $u$ and $v$ and we get
\begin{align*}
u(t) - v(t) - (u_0 - v_0) - \int_0^t \textnormal{div} (|\nabla u|^{p-2} \nabla u - |\nabla v|^{p-2} \nabla v) \,ds = 0
\end{align*}
for all $t \in [0,T]$ and a.s. in $\Omega$.\\
Using the It\^{o} formula with an approximation of the absolute value and tending to the limit yields (see, e.g., Proposition 5 in \cite{VZ18})
\begin{align*}
\int_D |u(t) - v(t)| \,dx - \int_D |u_0 - v_0| \,dx \leq 0
\end{align*}
for all $t \in [0,T]$ and a.s. in $\Omega$.
\end{proof}
\section{It\^{o} formula and renormalization}
In order to find an appropriate notion of renormalized solutions to \eqref{1}, we prove an It\^{o} formula in the $L^1$-framework. We remark that the combined It\^{o} chain and product rule from \cite{BFH}, Appendix A4 does not apply to our situation for two reasons. Firstly, we take the bouded domain $D\subset\mathbb{R}^d$ into account in our regularizing procedure by adding a cutoff function (see Appendix, Subsection \ref{proof}). Secondly, the spacial regularities are different in our case.\\
For two Banach spaces $X$, $Y$, let $L(X;Y)$ denote the Banach space of bounded, linear operators from $X$ to $Y$ and $L(X)$ denote the space of bounded linear operators from $X$ to $X$ respectively.\\
For the sake of completeness, we recall the following regularization procedure: 
\begin{lemma}\label{190212_lem01}
Let $D\subset\mathbb{R}^d$ be bounded domain with Lipschitz boundary, $1\leq p<\infty$. There exists a sequence of operators
\[\Pi_n:W^{-1,p'}(D)+L^1(D)\rightarrow W^{1,p}_0(D)\cap L^{\infty}(D),~n\in\mathbb{N}\]
such that 
\begin{itemize}
\item[$i.)$] $\Pi_n(v)\in W^{1,p}_0(D)\cap C^{\infty}(\overline{D})$ for all $v\in W^{-1,p'}(D)+L^1(D)$ and all $n\in\mathbb{N}$ 
\item[$ii.)$] For any $n\in\mathbb{N}$ and any Banach space 
\[F\in \{W^{1,p}_0(D), L^2(D), L^1(D), W^{-1,p'}(D), W^{-1,p'}(D)+L^1(D)\}.\]
$\Pi_n : F \to F$ is a bounded linear operator such that 
$\lim_{n \to \infty}{\Pi_n}_{|F}=I_F$ pointwise in $F$, where $I_F$ is the identity on $F$.
\end{itemize}
\end{lemma}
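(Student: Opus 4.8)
The plan is to build $\Pi_n$ as a \emph{mollification composed with an interior cutoff}, so that its image lies in $C_c^\infty(D)\subset W^{1,p}_0(D)\cap C^\infty(\overline D)$, which yields $i.)$ for free. Concretely, I would fix a standard mollifier $(\rho_n)_n$ with $\mathrm{supp}\,\rho_n\subset B(0,\varepsilon_n)$, $\varepsilon_n\to 0$, and a cutoff $\theta_n\in C_c^\infty(D)$ with $0\le\theta_n\le 1$, $\theta_n\to 1$ pointwise, $\mathrm{supp}\,\theta_n\subset\{x\in D:\mathrm{dist}(x,\partial D)>2\varepsilon_n\}$ and $|\nabla\theta_n|\le C\,\mathrm{dist}(\cdot,\partial D)^{-1}$. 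For $v\in W^{-1,p'}(D)+L^1(D)$ I set $\Pi_n v:=\rho_n*\widetilde{(\theta_n v)}$, where $\widetilde{\,\cdot\,}$ is extension by zero to $\mathbb{R}^d$. Since $\theta_n v$ is a compactly supported distribution (respectively an $L^1$ function), its mollification is smooth, and the support conditions on $\theta_n$ and $\rho_n$ force $\mathrm{supp}\,\Pi_n v$ to be a compact subset of $D$. This defines a single linear operator into $C_c^\infty(D)$ and settles $i.)$.

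For $ii.)$ I would treat the five spaces by one uniform scheme: in each case establish (a) a bound $\sup_n\|\Pi_n\|_{L(F)}<\infty$ and (b) convergence $\Pi_n v\to v$ on a convenient dense subset (typically $C_c^\infty(D)$), whence strong pointwise convergence on all of $F$ by Banach--Steinhaus. For $F=L^1(D)$ and $F=L^2(D)$ this is elementary: Young's inequality and $0\le\theta_n\le 1$ give $\|\Pi_n\|_{L(L^q)}\le 1$, while convergence on $C_c^\infty(D)$ follows from $\theta_n\to 1$ together with the approximate-identity property of $(\rho_n)_n$.

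The substantial case is $F=W^{1,p}_0(D)$, where differentiating produces, besides the harmless $\rho_n*\widetilde{(\theta_n\nabla v)}$, the commutator term $\rho_n*\widetilde{(v\,\nabla\theta_n)}$. Here the uniform bound hinges on \textbf{Hardy's inequality} on the bounded Lipschitz domain $D$: using $|\nabla\theta_n|\le C\,\mathrm{dist}(\cdot,\partial D)^{-1}$ one estimates $\|v\,\nabla\theta_n\|_{L^p}\le C\|v/\mathrm{dist}(\cdot,\partial D)\|_{L^p}\le C'\|\nabla v\|_{L^p}$ uniformly in $n$; the same integral, restricted to the shrinking shell $\{\mathrm{dist}(\cdot,\partial D)\lesssim\varepsilon_n\}$, tends to $0$ by dominated convergence, giving convergence of the commutator to $0$. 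The space $W^{-1,p'}(D)$ I would handle by duality: the formal adjoint is $\Pi_n^*\phi=\theta_n\,(\widetilde\rho_n*\widetilde\phi)$, again of cutoff-times-mollification type, and the same Hardy estimate gives $\sup_n\|\Pi_n^*\|_{L(W^{1,p}_0)}<\infty$, hence $\sup_n\|\Pi_n\|_{L(W^{-1,p'})}<\infty$; combined with convergence on a dense set of regular distributions (e.g. $L^{p'}(D)$) and Banach--Steinhaus, this yields strong convergence on $W^{-1,p'}(D)$. The sum space $W^{-1,p'}(D)+L^1(D)$ then follows by splitting any element along a fixed decomposition and applying the two previous cases in the sum norm.

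The main obstacle I anticipate is exactly the $W^{-1,p'}$ estimate and, underlying it, the $W^{1,p}_0$ estimate. Strong convergence on $W^{-1,p'}(D)$ forces a uniform operator bound, which cannot come from the crude $\|\nabla\theta_n\|_\infty\sim\varepsilon_n^{-1}\to\infty$ and genuinely requires Hardy's inequality valid for $1<p<\infty$ on bounded Lipschitz domains, together with the care needed to check that $\Pi_n^*$ maps $W^{1,p}_0(D)$ into itself with uniformly bounded norm. Once these uniform bounds are secured, all convergence statements reduce to density plus Banach--Steinhaus and become routine.
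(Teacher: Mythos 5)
Your proposal is correct, and its core coincides with the paper's own proof: the operator is the same (mollification of a cutoff, $\Pi_n v=\rho_n\ast(\theta_n v)$, with $|\nabla\theta_n|\lesssim \operatorname{dist}(\cdot,\partial D)^{-1}$ on a shrinking boundary shell), and the key lemma is the same, namely Hardy's inequality on the bounded Lipschitz domain $D$ to absorb the commutator term; indeed the paper's estimate of the term it calls $I_1^n$ \emph{is} your adjoint bound, since pairing $\Pi_n(-\operatorname{div}G)$ against $g\in W^{1,p}_0(D)$ produces exactly $\langle -\operatorname{div}G,\varphi_n(\rho_n\ast g)\rangle$. Where you genuinely diverge is the concluding step for $F=W^{-1,p'}(D)$: you deduce strong convergence from the uniform operator bound plus convergence on the dense subspace $L^{p'}(D)$ (an $\varepsilon/3$ argument), whereas the paper first shows $\Pi_n(-\operatorname{div}G)\to-\operatorname{div}G$ in $\mathcal{D}'(D)$ (citing H\"ormander), upgrades this to weak convergence in $W^{-1,p'}(D)$ by boundedness and density, and then to strong convergence by combining the norm estimate $\limsup_n\Vert\Pi_n(-\operatorname{div}G)\Vert_{W^{-1,p'}}\leq\Vert-\operatorname{div}G\Vert_{W^{-1,p'}}$ (the Hardy term carries a factor $\Vert G\Vert_{L^{p'}(H_n)^d}\to 0$) with the uniform convexity of $W^{-1,p'}(D)$, i.e.\ a Radon--Riesz argument. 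Your route is the more elementary one: it avoids uniform convexity and the distributional-convergence step entirely, at the mild price of the duality/reflexivity identification needed to transfer the bound from $\Pi_n^\ast$ to $\Pi_n$. Two points you should make explicit in a write-up: your cutoffs must satisfy $\theta_n\equiv 1$ outside a shell of width comparable to $\varepsilon_n$ (as the paper's $\varphi_n\equiv 1$ on $D_n$ does), since pointwise convergence $\theta_n\to 1$ alone does not localize $\nabla\theta_n$ to a shrinking shell for your dominated-convergence step; and your restriction to $1<p<\infty$ for the Hardy and duality arguments is shared by the paper's proof (Hardy and uniform convexity both fail at $p=1$), consistent with the paper's standing assumption $p>1$ despite the lemma being stated for $1\leq p<\infty$.
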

\begin{proof}
See Appendix, Subsection \ref{proof}.
\end{proof}

\begin{proposition}\label{Proposition 4.2}
Let $G \in L^{p'}(\Omega \times Q_T)^d$, $g\in L^2(\Omega\times Q_T)$, $f\in L^1(\Omega\times Q_T)$ be progressively measurable, $u_0 \in L^1(\Omega \times D)$ be $\mathcal{F}_0$-measurable and $u \in L^1(\Omega; \mathcal{C}([0,T];L^1(D))) \cap L^p(\Omega; L^p(0,T;W_0^{1,p}(D)))$ satisfy the equality
\begin{align}\label{2}
u(t) - u_0 +\int_0^t (-\textnormal{div}\,G +f) \,ds = \int_0^t g \,d\beta
\end{align}
in $L^2(D)$ for all $t \in [0,T]$ and a.s. in $\Omega$.\\
Then for all $\psi \in C^{\infty}([0,T] \times \overline{D})$ and all $S\in W^{2,\infty}(\mathbb{R})$ with $S''$ piecewise continuous 
such that $S'(0)=0$ or $\psi(t,x) =0$ for all $(t,x) \in [0,T]\times \partial D$ we have
\begin{align}\label{Itoformulat}
&(S(u(t)),\psi(t))_2 - (S(u_0),\psi(0))_2+ \int_0^t \langle -\operatorname{div}\,G+f,S'(u)\psi\rangle\,ds\nonumber\\ 
&=\int_0^t (S'(u)g,\psi)_2\,d\beta + \int_0^t (S(u),\psi_t)_2 \,ds + \frac{1}{2} \int_0^t \int_D  S''(u)g^2 \psi  \,dx\,ds
\end{align}
for all $t \in [0,T]$ and a.s. in $\Omega$, where
\begin{align*}
&\langle -\operatorname{div}\,G+f,S'(u)\psi\rangle=\langle -\operatorname{div}\,G+f,S'(u)\psi\rangle_{W^{-1,p'}(D)+L^1(D), W^{1,p}_0(D)\cap L^{\infty}(D)}\\
&=\int_D(G\cdot\nabla[S'(u)\psi]+fS'(u)\psi)\, dx
\end{align*}
a.s. in $\Omega\times (0,T)$.
In particular, for $\psi \in \mathcal{C}^{\infty}(\overline{D})$ not depending on $t\in [0,T]$ we get
\begin{align}\label{Itoformula}
&\int_D (S(u(t)) - S(u_0)) \psi \,dx + \int_0^t \int_D G \cdot\nabla[S'(u)\psi] \,dx\,ds + \int_0^t \int_D fS'(u)\psi \,dx\,ds\nonumber\\
= &\int_0^t \int_D S'(u) \psi g \,dx\,d\beta + \frac{1}{2} \int_0^t \int_D  S''(u) \psi g^2 \,dx\,ds
\end{align}
for all $t \in [0,T]$ and a.s. in $\Omega$.
\end{proposition}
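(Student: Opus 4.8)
The core difficulty is that equation \eqref{2} holds only in $L^2(D)$, where the term $-\operatorname{div}\,G$ lives in $W^{-1,p'}(D)$, while to apply the classical Itô formula in a Hilbert space I need the process to take values in a nice space and the integrands to be sufficiently regular. The natural approach is to apply the operators $\Pi_n$ from Lemma \ref{190212_lem01} to smooth $u$ spatially, derive the Itô formula for the regularized process $\Pi_n u$ (where everything lives in smooth spaces and the classical finite-dimensional/Hilbert-space Itô formula applies), and then pass to the limit $n\to\infty$.

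Let me think about the structure carefully. The paper mentions $S(0)=0$ or $\psi=0$ on the boundary as a condition, which signals that boundary terms in the integration by parts are the delicate point. So first I would set $u_n := \Pi_n u$ and apply $\Pi_n$ to \eqref{2}. Since $\Pi_n$ is linear and bounded on the relevant spaces and maps into $W^{1,p}_0(D)\cap C^\infty(\overline D)$, the regularized equation $u_n(t) - \Pi_n u_0 + \int_0^t \Pi_n(-\operatorname{div}\,G + f)\,ds = \int_0^t \Pi_n g\,d\beta$ holds, now with all quantities in a smooth space. For such a regular process, $S(u_n(t))$ can be handled: I would apply the standard Itô formula (the chain-and-product rule, e.g. from an infinite-dimensional Itô formula in $L^2(D)$ paired against $\psi$) to compute $d\,(S(u_n),\psi)_2$. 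This produces exactly the terms in \eqref{Itoformulat} but with $u_n$ in place of $u$ and $\Pi_n$-regularized coefficients: the drift pairing $\langle \Pi_n(-\operatorname{div}\,G+f), S'(u_n)\psi\rangle$, the stochastic integral $\int_0^t(S'(u_n)\Pi_n g,\psi)_2\,d\beta$, the time-derivative term, and the Itô correction $\tfrac12\int_0^t\int_D S''(u_n)(\Pi_n g)^2\psi$.

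**The main obstacle is the passage to the limit $n\to\infty$.** By property $ii.)$ of Lemma \ref{190212_lem01}, $\Pi_n u \to u$ in $L^p(0,T;W^{1,p}_0(D))$ and in $\mathcal C([0,T];L^1(D))$-type spaces, $\Pi_n g \to g$ in $L^2$, $\Pi_n u_0 \to u_0$ in $L^1$, etc. The terms involving $S$ and $S'$ are handled using that $S\in W^{2,\infty}(\mathbb R)$, so $S,S',S''$ are bounded and $S',S''$ Lipschitz/bounded; this gives uniform control and allows me to pass through the nonlinearities using dominated convergence together with (a subsequence of) a.e. convergence. The genuinely hard term is the drift pairing $\int_0^t\langle-\operatorname{div}\,G+f,\,S'(u)\psi\rangle\,ds$: here I must justify that the distributional divergence pairs correctly against $S'(u)\psi$, i.e. that $S'(u_n)\psi \to S'(u)\psi$ in $W^{1,p}_0(D)$ (so that $G\cdot\nabla[S'(u_n)\psi]\to G\cdot\nabla[S'(u)\psi]$ in $L^1$), which requires controlling $\nabla[S'(u_n)\psi] = S''(u_n)\nabla u_n\,\psi + S'(u_n)\nabla\psi$ — this is where $\nabla u_n \to \nabla u$ in $L^p$ and boundedness of $S',S''$ are essential. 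The boundary condition ($S'(0)=0$ or $\psi|_{\partial D}=0$) is precisely what guarantees $S'(u)\psi \in W^{1,p}_0(D)$ so that the duality pairing $\langle-\operatorname{div}\,G, S'(u)\psi\rangle = \int_D G\cdot\nabla[S'(u)\psi]\,dx$ is legitimate with no boundary contribution. For the stochastic integral term I would use the Itô isometry to upgrade convergence of integrands in $L^2(\Omega\times Q_T)$ to convergence of the stochastic integrals.

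**Finally**, once \eqref{Itoformulat} is established, the specialization \eqref{Itoformula} follows immediately by taking $\psi$ independent of $t$, so that $\psi_t \equiv 0$ and the $\int_0^t(S(u),\psi_t)_2\,ds$ term vanishes; the remaining rearrangement is purely notational. I would organize the write-up by first recording the regularized Itô formula as the computational heart, then devoting the bulk of the argument to the four limit passages, isolating the drift/divergence term as the crux.
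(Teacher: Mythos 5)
Your overall strategy coincides with the paper's: regularize spatially with the operators $\Pi_n$ of Lemma \ref{190212_lem01}, apply the classical It\^{o} formula to the regularized equation (the paper does this pointwise in $x\in D$ with $h(t,r)=S(r)\psi(t,x)$ and then integrates over $D$ via Fubini), and pass to the limit $n\to\infty$ term by term, with exactly the convergences you single out: $S'(u_n)\psi\to S'(u)\psi$ in $L^p(0,t;W^{1,p}_0(D))$ paired against $\Pi_n(-\operatorname{div}\,G)\to-\operatorname{div}\,G$ in $L^{p'}(0,t;W^{-1,p'}(D))$ for the drift, the It\^{o} isometry for the stochastic integral, and dominated convergence elsewhere. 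Your identification of the role of the hypothesis $S'(0)=0$ or $\psi|_{\partial D}=0$ (it is what places $S'(u)\psi$ in $W^{1,p}_0(D)$ so the duality pairing carries no boundary term) also matches the paper.

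There is, however, one genuine gap: you apply the classical It\^{o} formula directly to $S\in W^{2,\infty}(\mathbb{R})$ with $S''$ merely piecewise continuous. The classical formula you invoke --- whether the one-dimensional one applied pointwise in $x$, or an infinite-dimensional one for the functional $v\mapsto\int_D S(v)\psi\,dx$ --- requires $S\in\mathcal{C}^2$; boundedness of $S$, $S'$, $S''$ is not a substitute for continuity of $S''$, and your write-up nowhere accounts for this. The paper's proof opens with precisely this reduction: it first establishes the formula under the assumption $S\in\mathcal{C}^2(\mathbb{R})$ with $S'$, $S''$ bounded, and then recovers the general case by an approximation argument (Corollary \ref{cor1} in the Appendix), choosing $S_{\delta}\in\mathcal{C}^2_b(\mathbb{R})$ with $\Vert S''_{\delta}\Vert_{\infty}\leq\Vert S''\Vert_{\infty}$, $S_{\delta}\to S$ and $S'_{\delta}\to S'$ uniformly on compact sets, $S''_{\delta}\to S''$ pointwise, and passing to the limit $\delta\to 0$ in every term, including the It\^{o} correction $\frac{1}{2}\int_0^t\int_D S''_{\delta}(u)g^2\psi\,dx\,ds$, by dominated convergence. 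Without this extra layer of approximation your proof, as written, invokes a theorem whose hypotheses are not satisfied; with it, your argument is in substance the paper's own.
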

\begin{proof}
Let us assume $S\in \mathcal{C}^2(\mathbb{R})$ such that $S'$, $S''$ is bounded, the general result then follows by an approximation argument (see Corollary \ref{cor1} in the Appendix).\\
We choose the regularizing sequence $(\Pi_n)$ according to Lemma \ref{190212_lem01} and set $u_n:= \Pi_n(u)$, $u_0^{n}:= \Pi_n(u_0)$, $G_n:= \Pi_n(-\operatorname{div}\,G)$, $f_n:=\Pi_n(f)$ and $g_n:=\Pi_n(g)$. We apply the operator $\Pi_n$ to both sides of \eqref{2}. Since $\Pi_n \in L(W^{-1,p'}(D)+L^1(D); W_0^{1,p}(D)\cap L^{\infty}(D))$, we may conclude
\begin{align*}
u_n(t) - u_0^n +\int_0^t G_n+f_n \,ds= \int_0^t g_n \,d\beta
\end{align*}
in $D$, for all $t \in [0,T]$ and a.s. in $\Omega$. For $x \in D$ fixed, we apply the classic It\^{o} formula for $h(t,r):=S(r)\psi(t,x)$ with respect to the time variable $t$. Integration over $D$ afterwards and Fubini Theorem yield
\begin{align*}
I_1+I_2+I_3=I_4+I_5+\frac{1}{2}I_6,
\end{align*}
where
\begin{align*}
I_1=\int_D S(u_n(t))\psi(t) - S(u_0^n) \psi(0) \,dx,
\end{align*}
\begin{align*}
I_2=\int_0^t \langle G_n, S'(u_n) \psi \rangle_{W^{-1,p'}(D),W_0^{1,p}(D)}\,ds,
\end{align*}
\begin{align*}
I_3=\int_0^t\int_D f_nS'(u_n) \psi\,dx \,ds,
\end{align*}
\begin{align*}
I_4=\int_0^t \int_D S'(u_n) \psi g_n \,dx\,d\beta
\end{align*}
\begin{align*}
I_5=\int_0^t \int_D S(u_n) \psi_t \,dx\,ds
\end{align*}
\begin{align*} 
I_6=\int_0^t \int_D  S''(u_n) \psi g_n^2 \,dx\,ds
\end{align*} 
for all $t \in [0,T]$ and a.s. in $\Omega$. Now, we want to pass to the limit with $n\to \infty$ in $I_1-I_6$. Since $u_0^n\rightarrow u_0$ and $u_n(t)\rightarrow u(t)$ in $L^1(D)$ a.s. in $\Omega$ for any $t\in [0,T]$,
\begin{align}\label{190314_01}
\lim_{n\rightarrow\infty} I_1=\int_D S(u(t))\psi(t) - S(u_0) \psi(0) \,dx.
\end{align}
For any $s\in (0,t)$ and a.s. in $\Omega$, $G_n(\omega,s)\rightarrow -\operatorname{div}\, G(\omega,s)$ in $W^{-1,p'}(D)$ for $n\rightarrow\infty$. Moreover,
\begin{align*}
\Vert G_n(\omega,s)\Vert_{W^{-1,p'}(D)}&\leq \Vert \Pi_n\Vert_{L(W^{-1,p'}(D))}\Vert -\operatorname{div}\, G(\omega,s)\Vert_{W^{-1,p'}(D)}\\
&\leq C_U\Vert -\operatorname{div}\, G(\omega,s)\Vert_{W^{-1,p'}(D)},
\end{align*}
where $C_U\geq 0$ is a generic constant not depending on $n\in\mathbb{N}$ from the Uniform Boundedness Principle. Since $G(\omega,\cdot)\in L^{p'}(Q_T)^d$ for a.e. $\omega\in\Omega$, it follows that $-\operatorname{div}\, G(\omega,\cdot)\in L^{p'}(0,T;W^{-1,p'}(D))$ and from Lebesgue's dominated convergence theorem it follows that
\begin{align*}
\lim_{n\rightarrow\infty} G_n=-\operatorname{div}\, G
\end{align*}
in $L^{p'}(0,t;W^{-1,p'}(D))$ for every $t\in (0,T)$, a.s. in $\Omega$. For every $s\in (0,t)$ and a.e. $\omega\in\Omega$, from the chain rule for Sobolev functions we get
\begin{align}\label{190315_01}
\nabla[S'(u_n(\omega,s))\psi(s)]=S''(u_n(\omega,s))\nabla u_n(\omega,s)\psi(s)+S'(u_n(\omega,s))\nabla \psi(s).
\end{align}
For any $s\in [0,t]$ and almost every $\omega\in\Omega$, $u_n(\omega,s)\rightarrow u(\omega,s)$ in $W^{1,p}_0(D)$ for $n\rightarrow\infty$, passing to a (not relabeled) subsequence if necessary (that may depend on $(\omega,s)$), the right-hand side of \eqref{190315_01} converges to $S''(u(\omega,s))\nabla u(\omega,s)\psi(s)+S'(u(\omega,s))\nabla\psi(s)$ for $n\rightarrow\infty$ a.e. in $D$ and there exists $\zeta\in L^p(D)$, that may depend on $(\omega,s)$, such that
\begin{align*}
|u_n(\omega,s)|+|\nabla u_n(\omega,s)|\leq \zeta(\omega,s)
\end{align*}
for all $n\in\mathbb{N}$, a.s. in $D$. Consequently, $S'(u_n(\omega,s))\psi(s)\rightarrow S'(u(\omega,s))\psi(s)$ for $n\rightarrow\infty$ in $W^{1,p}_0(D)$ and this convergence holds for the whole sequence. From the boundedness of $S'$, $S''$, $\psi$ and $\nabla \psi$ it follows that there exist constants $C, \tilde{C}\geq 0$ not depending on the parameters $n,\omega,s$ such that
\begin{align*}
\Vert S'(u_n(\omega,s))\psi(s)\Vert_{W^{1,p}_0(D)}=C\Vert \Pi_n\Vert_{L(W^{1,p}_0(D))}\Vert u(\omega,s)\Vert_{W^{1,p}_0(D)}+\tilde{C}
\end{align*}
and $\Vert \Pi_n\Vert_{L(W^{1,p}_0(D))}\leq C_U$ for all $n\in\mathbb{N}$ thanks to the Uniform Boundedness Principle. For these reasons, from Lebesgue's dominated convergence theorem it follows that 
\begin{align}\label{190315_03}
\lim_{n\rightarrow\infty}S'(u_n)\psi=S'(u)\psi 
\end{align}
in $L^p(0,t;W^{1,p}_0(D))$ a.s. in $\Omega$ and therefore
\begin{align}\label{190315_02}
\lim_{n\rightarrow\infty} I_2=\int_0^t \langle -\operatorname{div}\, G, S'(u) \psi \rangle_{W^{-1,p'}(D),W_0^{1,p}(D)}\,ds.
\end{align}
a.s. in $\Omega$. For any $s\in (0,t)$ and a.e. $\omega\in \Omega$, $f_n(\omega,s)\rightarrow f(\omega,s)$ in $L^1(D)$. Moreover,
\begin{align*}
\Vert f_n(\omega,s)\Vert_{L^1(D)}\leq C_U\Vert f(\omega,s)\Vert_{L^1(D)}
\end{align*}
for all $n\in\mathbb{N}$, for all $s\in (0,t)$ and a.s. in $\Omega$. Therefore,
from Lebesgue's dominated convergence theorem it follows that $f_n\rightarrow f$ in $L^1((0,t)\times D)$ a.s. in $\Omega$ for $n\rightarrow\infty$. On the other hand, since $S'(u_n)\psi$ is bounded with respect to $n\in\mathbb{N}$ in $L^{\infty}(Q_T)$ and from the convergence \eqref{190315_03} in $L^p(0,t;W^{1,p}_0(D))$ it follows that $S'(u_n)\psi\stackrel{\ast}{\rightharpoonup}S'(u)\psi$ in $L^{\infty}(Q_t)$ a.s. in $\Omega$, therefore
\begin{align}\label{190315_04}
\lim_{n\rightarrow\infty}I_3=\int_0^t\int_D fS'(u)\psi\,dx\,ds
\end{align}
a.s. in $\Omega$. Using It\^{o} isometry we get that
\begin{align}\label{190318_01}
&\mathbb{E}\left|\int_0^t\int_D S'(u_n)\psi g_n-S'(u)\psi g\,dx\,d\beta\right|^2\nonumber\\
&=\mathbb{E}\int_0^t\int_D |S'(u_n)\psi g_n-S'(u)\psi g|^2\,dx\,ds\nonumber\\
&\leq 2\Vert \psi\Vert_{\infty}^2\mathbb{E}\left[\int_0^t\int_D|S'(u_n)(g_n-g)|^2\,dx\,ds+\int_0^t\int_D|(S'(u_n)-S'(u))g|^2\,dx\,ds\right]\nonumber\\
&\leq 2\Vert \psi\Vert_{\infty}^2\mathbb{E}\left[\Vert S'\Vert_{\infty}^2\int_0^t\Vert g_n-g\Vert^2_{L^2(D)}\,ds+\int_0^t\int_D|(S'(u_n)-S'(u))g|^2\,dx\,ds\right].
\end{align}
Since $g_n(\omega,s)\rightarrow g(\omega,s)$ for $n\rightarrow\infty$ a.s. in $\Omega\times (0,T)$ and
\begin{align*}
\Vert g_n(\omega,s)-g(\omega,s)\Vert^2\leq 2\Vert g(\omega,s)\Vert^2_{L^2(D)}(C_U+1),
\end{align*}
from Lebesgue's dominated convergence theorem it follows that
\begin{align}\label{190318_03}
\lim_{n\rightarrow\infty} \mathbb{E}\int_0^t \Vert g_n-g\Vert^2_{L^2(D)}\, ds=0.
\end{align}
Since $u_n(\omega,s)\rightarrow u(\omega,s)$ for $n\rightarrow\infty$ in $L^1(D)$ and
\begin{align*}
\Vert u_n(\omega,s)\Vert_{L^1(D)}\leq C_U\Vert u(\omega,s)\Vert_{L^1(D)}
\end{align*} 
for a.e. $(\omega,s)\in \Omega\times (0,T)$ and all $n\in\mathbb{N}$, from Lebesgue's dominated convergence theorem it follows that $u_n\rightarrow u$ in $L^1(\Omega\times Q_T)$ and, passing to a not relabeled subsequence if necessary, also a.s. in $\Omega\times Q_T$. Consequently, a.s. in $\Omega\times Q_T$, we get
\begin{align*}
\lim_{n\rightarrow\infty}|S'(u_n(\omega,s,x))-S'(u(\omega,s,x))|^2|g(\omega,s,x)|^2=0.
\end{align*}
In addition,
\begin{align*}
|S'(u_n(\omega,s,x))-S'(u(\omega,s,x))|^2|g(\omega,s,x)|^2\leq 2\Vert S'\Vert_{\infty}^2|g(\omega,s,x)|^2
\end{align*}
a.s. in $\Omega\times Q_T$ and from Lebesgue's dominated convergence theorem it follows that
\begin{align}\label{190318_04}
\lim_{n\rightarrow\infty} E\int_0^t\int_D|S'(u_n)-S'(u)|^2|g|^2\,dx\,ds=0
\end{align}
for any $t\in [0,T]$. Combining \eqref{190318_01}, \eqref{190318_03} and \eqref{190318_04}, it follows that
\begin{align*}
\lim_{n\rightarrow\infty}\int_0^t\int_DS'(u_n)\psi g_n\,dx\,d\beta=\int_0^t\int_DS'(u)\psi g \,dx\,d\beta
\end{align*}
in $L^2(\Omega)$ for any $t\in [0,T]$, and, passing a not relabeled subsequence if necessary, also a.s. in $\Omega$. Hence, up to a not relabeled subsequence,
\begin{align}\label{190318_05}
\lim_{n\rightarrow\infty}I_4=\int_0^t\int_D S(u)\psi g\,dx\,d\beta
\end{align}
a.s. in $\Omega$. From the boundedness of $S$ and the convergence of $u_n(\omega,s)$ to $u(\omega,s)$ in $L^1(D)$ for all $s\in(0,t)$, a.s. in $\Omega$, it follows that $S(u_n)\rightarrow S(u)$ for $n\rightarrow\infty$ in $L^1(Q_T)$, a.s. in $\Omega$ and therefore
\begin{align}\label{190318_06}
\lim_{n\rightarrow\infty}I_5=\int_0^t S(u)\psi_t\,dx\,ds
\end{align} 
a.s. in $\Omega$. According to the convergence properties of $(g_n)$, $g_n^2\rightarrow g^2$ in $L^1((0,t)\times D)$ for $n\rightarrow\infty$ a.s. in $\Omega$. On the other hand, from the boundedness and the continuity of $S''$ we get $S''(u_n)\rightarrow S''(u)$ in $L^q((0,t)\times D)$ for all $1\leq q<\infty$ and weak-$\ast$ in $L^{\infty}((0,t)\times D)$ a.s. in $\Omega$, thus it follows that
\begin{align}\label{190318_07}
\lim_{n\rightarrow\infty} I_6=\int_0^t\int_DS''(u)\psi g^2\,dx\,ds
\end{align}
a.s. in $\Omega$.
Summarizing our results in \eqref{190314_01}, \eqref{190315_02}, \eqref{190315_04}, \eqref{190318_05}, \eqref{190318_06} and \eqref{190318_07}, we get
\begin{align*}
&\int_D S(u(t))\psi(t) - S(u_0) \psi(0) \,dx\\
&+\int_0^t \langle (-\operatorname{div} \,G+f), S'(u)\psi \rangle_{W^{-1,p'}(D)+L^1(D), W_0^{1,p}(D)\cap L^{\infty}(D)} \,ds\\
= &\int_0^t \int_D S'(u) \psi g \,dx\,d\beta + \int_0^t \int_D S(u) \psi_t \,dx\,ds + \frac{1}{2} \int_0^t \int_D  S''(u) \psi g^2 \,dx\,ds
\end{align*}
for all $t \in [0,T]$ and a.s. in $\Omega$. 

\end{proof}

\section{Renormalized solutions}
Let us assume that there exists a strong solution $u$ to \eqref{1} in the sense of Theorem \ref{Theorem 2.1}. We observe that for initial data $u_0$ merely in $L^1$, the It\^{o} formula for the square of the norm (see, e.g., \cite{EP}) can not be applied and consequently the natural a priori estimate for $\nabla u$ in $L^p(\Omega\times Q_T)^d$ is not available. Choosing $g=\Phi$, $f\equiv 0$, $\psi\equiv 1$ and 
\[S(u)=\int_0^{u}T_k(r)\, dr\]
in \eqref{Itoformula}, where $T_k:\mathbb{R}\rightarrow\mathbb{R}$ is the truncation function at level $k>0$ defined by
\begin{align*}
T_k(r)=\begin{cases} r &,~|r| \leq k, \\ 
k\operatorname{sign}(r) &,~ |r|> k,
\end{cases}
\end{align*}
we find that there exists a constant $C(k)\geq 0$ depending on the truncation level $k>0$, such that
\[\mathbb{E}\int_0^T\int_D |\nabla T_k(u)|^p\,dx\,ds\leq C(k).\]
As in the deterministic case, the notion of renormalized solutions takes this information into account : 
\begin{definition}\label{Definition 5.1}
Let $u_0 \in L^1(\Omega \times D)$ be $\mathcal{F}_0$-measurable. An $\mathcal{F}_t$-adapted stochastic process $u: \Omega \times [0,T] \to L^1(D)$ such that $u \in L^1(\Omega; \mathcal{C}([0,T];L^1(D)))$ is a renormalized solution to (\ref{1}) with initial value $u_0$, if and only if
\begin{itemize}
\item[(i)]
$T_k(u) \in L^p(\Omega; L^p(0,T;W_0^{1,p}(D)))$ for all $k>0$. 
\item[(ii)]
For all $\psi \in \mathcal{C}^{\infty}([0,T] \times \bar{D})$ and all $S \in \mathcal{C}^2(\mathbb{R})$ such that $S'$ has compact support with $S'(0)=0$ or $\psi(t,x) =0$ for all $(t,x) \in [0,T] \times \partial D$ the equality
\begin{align}\label{reneq190204}
&\int_D S(u(t))\psi(t) - S(u_0) \psi(0) \,dx + \int_0^t \int_D S''(u) |\nabla u|^p \psi \,dx\,ds\nonumber \\
+ &\int_0^t \int_D S'(u) |\nabla u|^{p-2} \nabla u \cdot \nabla \psi \,dx\,ds\nonumber\\
= &\int_0^t \int_D S'(u) \psi \Phi \,dx\,d\beta + \int_0^t \int_D S(u) \psi_t \,dx\,ds + \frac{1}{2} \int_0^t \int_D  S''(u) \psi \Phi^2 \,dx\,ds
\end{align}
holds true for all $t \in [0,T]$ and a.s. in $\Omega$.
\item[(iii)]
The following energy dissipation condition holds true:
\begin{align*}
\lim\limits_{k \to \infty}\mathbb{E}\int_{\{k < |u| < k+1 \}} |\nabla u|^p \,dx\,dt= 0.
\end{align*}
\end{itemize}
\end{definition}
Several remarks about Definition \ref{Definition 5.1} are in order:
Let $u$ be a renormalized solution in the sense of Definition \ref{Definition 5.1}. Since $\operatorname{supp}\,(S')\subset [-M,M]$, it follows that $S$ is constant outside $[-M,M]$ and for all $k\geq M$, $S(u(t))=S(T_k(u(t)))$ a.s. in $\Omega\times D$ for all $t\in [0,T]$. In particular, we have 
\[S(u)\in L^1(\Omega;\mathcal{C}([0,T];L^1(D)))\cap L^p(\Omega;L^p(0,T;W^{1,p}(D)))\cap L^{\infty}(\Omega\times Q_T).\] 
From the chain rule for Sobolev functions it follows that
\begin{equation}\label{180701_02}
S'(u)(|\nabla u|^{p-2}\nabla u)=S'(T_M(u))(|\nabla T_M(u)|^{p-2}\nabla T_M(u))=S'(u)\chi_{\{|u|<M\}}(|\nabla u|^{p-2}\nabla u)
\end{equation}
a.s. in $\Omega\times Q_T$ and therefore from $(i)$ it follows that all the terms in \eqref{reneq190204} are well-defined. In general, for the  renormalized solution $u$, $\nabla u$ may not be in $L^p(\Omega\times Q_T)^d$ and therefore $(iii)$ is an additional condition which can not be derived from $(ii)$. However, for $u\in L^1(\Omega\times Q_T)$ satisfying $(i)$, we can define a generalized gradient (still denoted by $\nabla u$) by setting
\[\nabla u(\omega,t,x):=\nabla T_k(u)\]
a.s. in $ \{ |u| < k\}$ for $k>0$. The function $\nabla u$ is well defined since $\bigcup\limits_{k>0}  \{ |u| < k\} = \Omega \times Q_T$, $T_k(u)=T_k(T_{k+\epsilon}(u))$, $T_{k + \epsilon}(u) \in L^p(\Omega;L^p(0,T;W^{1,p}(D)))$ and therefore
\begin{align*}
\nabla T_k(u)= \nabla T_k(T_{k + \epsilon}(u)) = \nabla T_{k + \epsilon}(u) \chi_{\{|u|<k\}} = \nabla T_{k+ \epsilon}(u)
\end{align*}
in $ \{ |u| < k\}$ for all $k, \epsilon >0$.
For $u\in L^1(\Omega;\mathcal{C}([0,T];L^1(D)))$ such that $T_k(u)\in L^p(\Omega;L^p(0,T;W^{1,p}_0(D)))$ for all $k>0$, $(ii)$ is equivalent to
\begin{align}\label{180625_01}
&S(u(t))-S(u(0))-\int_0^t \operatorname{div}\,(S'(u)|\nabla u|^{p-2}\nabla u) \,ds+\int_0^t S''(u)[|\nabla u|^p-\frac{1}{2}\Phi^2] \,ds \nonumber\\
&=\int_0^t \Phi S'(u) \,d\beta,
\end{align}
or equivalently, in differential form,
\begin{align}\label{SPDE1}
&dS(u)-\operatorname{div}\,(S'(u)|\nabla u|^{p-2}\nabla u) \,dt+S''(u)[|\nabla u|^p-\frac{1}{2}\Phi^2]\,dt
=\Phi S'(u) \,d\beta
\end{align}
in $W^{-1,p'}(D)+L^1(D)$ for all $t\in[0,T]$, a.s. in $\Omega$ and for any $S\in \mathcal{C}^2(\mathbb{R})$ with $\operatorname{supp}(S')$ compact, and, since the right-hand side of \eqref{SPDE1} is in $L^2(D)$, also in $L^2(D)$.
\begin{remark}\label{Remark 5.2}
If $u$ is a renormalized solution to \eqref{1}, thanks to \eqref{SPDE1}, the It\^{o} formula from Proposition \ref{Proposition 4.2} still holds true for $S(u)$ for any $S\in \mathcal{C}^2(\mathbb{R})$ with $\operatorname{supp}(S')$ compact such that $S(u)\in W^{1,p}_0(D)$ a.s. in $\Omega\times (0,T)$. Indeed, in this case \eqref{2} is satisfied for the progressively measurable functions
\[\tilde{u}=S(u)\in L^1(\Omega; \mathcal{C}([0,T];L^1(D))) \cap L^p(\Omega; L^p(0,T;W_0^{1,p}(D))),\] 
\[G=S'(u)|\nabla u|^{p-2}\nabla u\in L^{p'}(\Omega\times Q_T)^d,\]
\[f=S''(u)[|\nabla u|^p-\frac{1}{2}\Phi^2\in L^1(\Omega\times Q_T),\]
\[g=\Phi S'(u)\in L^2(\Omega\times Q_T).\] 
\end{remark}
\begin{remark}
Let $u$ be a renormalized solution to \eqref{1} with $\nabla u\in L^p(\Omega\times Q_T)^d$. For fixed $l>0$, let $h_l:\mathbb{R}\rightarrow\mathbb{R}$ be defined by
\begin{align*}
h_l(r)=\begin{cases} 0 &,~|r| \geq l+1 \\ 
l+1-|r| &,~ l<|r|<l+1\\
1 &,~ |r|\leq l.
\end{cases}
\end{align*}
Taking  $S(u)=\int_0^{u} h_l(r) \,dr$ as a test function in \eqref{181201_04}, we may pass to the limit with $l\rightarrow\infty$ and we find that $u$ is a strong solution to \eqref{1}.
\end{remark}

\section{Existence of renormalized solutions}

Before we show the existence of a renormalized solution, we show the following lemma.
\begin{lemma}\label{Lemma 6.1}
Let $(u_0^n)_n \subset L^2(\Omega \times D)$ be an $\mathcal{F}_0$-measurable sequence such that $u_0^n \to u_0$ in $L^1(\Omega \times D)$ for a function $u_0 \in L^1(\Omega \times D)$. Furthermore, let $u_n$ be a strong solution to \eqref{1} with respect to the initial value $u_0^n$. Then there exists an $\mathcal{F}_t$-adapted stochastic process $u: \Omega \times [0,T] \to L^1(D)$ such that $u \in L^1(\Omega; \mathcal{C}([0,T]; L^1(D)))$ and a subsequence in $n$ such that
\begin{align*}
u_n \to u ~~~\textnormal{in}~L^1(\Omega; \mathcal{C}([0,T]; L^1(D)))~\textnormal{and in}~\mathcal{C}([0,T]; L^1(D))~\textnormal{a.s. in}~\Omega.
\end{align*}
\end{lemma}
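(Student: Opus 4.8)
The plan is to use the comparison principle from Theorem~\ref{Theorem 3.1} to transfer the Cauchy property of the initial data directly to the solutions. Since $u_n$ and $u_m$ are strong solutions with initial data $u_0^n$ and $u_0^m$, Theorem~\ref{Theorem 3.1} yields the pathwise estimate
\[
\sup_{t \in [0,T]} \int_D |u_n(t) - u_m(t)| \, dx \leq \int_D |u_0^n - u_0^m| \, dx
\]
a.s.\ in $\Omega$, that is, $\Vert u_n - u_m\Vert_{\mathcal{C}([0,T]; L^1(D))} \leq \Vert u_0^n - u_0^m\Vert_{L^1(D)}$ almost surely.

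First I would take expectations to obtain
\[
\mathbb{E}\,\Vert u_n - u_m\Vert_{\mathcal{C}([0,T]; L^1(D))} \leq \Vert u_0^n - u_0^m\Vert_{L^1(\Omega \times D)}.
\]
Since $(u_0^n)_n$ converges in $L^1(\Omega \times D)$ it is Cauchy there, so the right-hand side tends to zero as $n, m \to \infty$, and consequently $(u_n)_n$ is Cauchy in the Bochner space $L^1(\Omega; \mathcal{C}([0,T]; L^1(D)))$. As $\mathcal{C}([0,T]; L^1(D))$ is a Banach space, this Bochner space is complete, and therefore there exists a limit $u \in L^1(\Omega; \mathcal{C}([0,T]; L^1(D)))$ with $u_n \to u$ in this norm. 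The membership of $u$ in this space already provides almost surely continuous $L^1(D)$-valued paths, so $u$ is of the required form as a process $\Omega \times [0,T] \to L^1(D)$.

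Convergence in the Bochner norm provides a subsequence (which I do not relabel) along which $\Vert u_n - u\Vert_{\mathcal{C}([0,T]; L^1(D))} \to 0$ a.s.\ in $\Omega$; this is exactly the asserted convergence in $\mathcal{C}([0,T]; L^1(D))$ a.s.\ in $\Omega$. It then remains to check that $u$ is $\mathcal{F}_t$-adapted. For each fixed $t \in [0,T]$ and a.e.\ $\omega \in \Omega$, the subsequence satisfies $u_n(t) \to u(t)$ in $L^1(D)$; since each $u_n(t)$ is $\mathcal{F}_t$-measurable and the filtration satisfies the usual assumptions (so each $\mathcal{F}_t$ is complete), the almost sure limit $u(t)$ is again $\mathcal{F}_t$-measurable, whence $u$ is adapted.

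The comparison principle does essentially all of the work here, so I do not expect a genuine obstacle: the only points requiring care are the passage from convergence in the Bochner norm to almost sure convergence of the paths, which costs a subsequence, and the verification that $\mathcal{F}_t$-adaptedness is preserved under this almost sure limit.
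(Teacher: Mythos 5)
Your proposal is correct and follows essentially the same route as the paper: both rest entirely on the comparison principle of Theorem~\ref{Theorem 3.1}, pass from the Cauchy property of $(u_0^n)_n$ in $L^1(\Omega\times D)$ to the Cauchy property of $(u_n)_n$ in $L^1(\Omega;\mathcal{C}([0,T];L^1(D)))$, and obtain adaptedness of the limit as an almost sure limit of $\mathcal{F}_t$-measurable functions. If anything, your version is slightly more careful than the paper's in making explicit that the almost sure convergence of paths costs a subsequence (the right-hand side $\int_D|u_0^n-u_0^m|\,dx$ only tends to zero in $L^1(\Omega)$ a priori), which is precisely why the lemma is stated for a subsequence.
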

\begin{proof}
By assumption $(u_0^n)_n$ is a Cauchy sequence in $L^1(\Omega \times D)$. Then, Theorem \ref{Theorem 3.1} yields
\begin{align*}
\sup\limits_{t \in [0,T]} \int_D |u_n(t) - u_m(t)| \,dx \leq \int_D |u_0^n - u_0^m| \,dx \to 0
\end{align*}
as $n,m \to \infty$, a.s. in $\Omega$ and in $L^1(\Omega)$. Especially, a.s. in $\Omega$ and for all $t \in [0,T]$ we have $u_n(t) \to u(t)$ in $L^1(D)$. As a limit function of a sequence of $\mathcal{F}_t$-measurable functions we may conclude that $u(t)$ is $\mathcal{F}_t$-measurable.
\end{proof}
\begin{theorem}\label{Theorem 6.2}
Let the assumptions in Definition \ref{Definition 5.1} be satisfied. Then there exists a renormalized solution to \eqref{1}.
\end{theorem}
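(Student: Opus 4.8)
The plan is to obtain the renormalized solution as the limit of the approximating strong solutions from Lemma \ref{Lemma 6.1}. Fix an $\mathcal{F}_0$-measurable sequence $u_0^n\in L^2(\Omega\times D)$ with $u_0^n\to u_0$ in $L^1(\Omega\times D)$, let $u_n$ be the associated strong solutions, and let $u$ be the limit from Lemma \ref{Lemma 6.1}, so that $u_n\to u$ in $L^1(\Omega;\mathcal{C}([0,T];L^1(D)))$ and a.s.\ in $\Omega$. The first step is the uniform truncation estimate already used in Section 5 preceding Definition \ref{Definition 5.1}: applying Proposition \ref{Proposition 4.2} to $u_n$ with $g=\Phi$, $f\equiv0$, $G=|\nabla u_n|^{p-2}\nabla u_n$, $\psi\equiv1$ and $S=S_k$ the primitive of $T_k$ (so that $S_k''=\chi_{\{|\cdot|<k\}}$) and taking expectations, which annihilates the It\^o term, yields
\[\mathbb{E}\int_D S_k(u_n(T))\,dx+\mathbb{E}\int_0^T\int_D|\nabla T_k(u_n)|^p\,dx\,ds=\mathbb{E}\int_D S_k(u_0^n)\,dx+\tfrac12\mathbb{E}\int_0^T\int_{\{|u_n|<k\}}\Phi^2\,dx\,ds.\]
Since $S_k\geq0$ and $|S_k(r)|\leq k|r|$, the right-hand side is bounded by $k\sup_n\Vert u_0^n\Vert_{L^1(\Omega\times D)}+\tfrac12\Vert\Phi\Vert^2_{L^2(\Omega\times Q_T)}=:C(k)$, uniformly in $n$. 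Hence $(T_k(u_n))_n$ is bounded in $L^p(\Omega;L^p(0,T;W_0^{1,p}(D)))$ for every $k>0$.

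Next I pass to weak limits. Because $u_n\to u$ a.s.\ and $|T_k(u_n)|\leq k$, one has $T_k(u_n)\to T_k(u)$ strongly in every $L^q(\Omega\times Q_T)$, $q<\infty$; combined with the uniform gradient bound this identifies the weak limit and gives $T_k(u)\in L^p(\Omega;L^p(0,T;W_0^{1,p}(D)))$ with $\nabla T_k(u_n)\rightharpoonup\nabla T_k(u)$ in $L^p(\Omega\times Q_T)^d$ and $|\nabla T_k(u_n)|^{p-2}\nabla T_k(u_n)\rightharpoonup\sigma_k$ in $L^{p'}(\Omega\times Q_T)^d$ for some $\sigma_k$. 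This already establishes condition $(i)$ of Definition \ref{Definition 5.1}. To pass to the limit in the renormalized equation $(ii)$, however, weak convergence is insufficient: with $\operatorname{supp}(S')\subset[-M,M]$ the term $\int_0^t\int_D S''(u_n)|\nabla u_n|^p\psi$ equals $\int_0^t\int_D S''(T_M(u_n))|\nabla T_M(u_n)|^p\psi$ and involves the $p$-th power of the gradient, while the flux term forces us to identify $\sigma_M=|\nabla T_M(u)|^{p-2}\nabla T_M(u)$. Both require strong convergence of the truncated gradients.

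The main obstacle is therefore to prove $\nabla T_k(u_n)\to\nabla T_k(u)$ strongly in $L^p(\Omega\times Q_T)^d$ for each $k>0$. Here I would exploit a crucial simplification: the noise $\Phi\,d\beta$ is common to all $u_n$ and cancels in the difference $w_{n,m}:=u_n-u_m$, which solves the \emph{pathwise deterministic} problem $\partial_t w_{n,m}-\operatorname{div}(|\nabla u_n|^{p-2}\nabla u_n-|\nabla u_m|^{p-2}\nabla u_m)=0$ with $w_{n,m}(0)=u_0^n-u_0^m$ and $w_{n,m}\in\mathcal{C}([0,T];L^2(D))\cap L^p(0,T;W_0^{1,p}(D))$ a.s. The disappearance of the stochastic integral from the energy balance for differences lets me run a Minty-type monotonicity argument as in the deterministic renormalized theory. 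Using the monotonicity inequality $(|\xi|^{p-2}\xi-|\eta|^{p-2}\eta)\cdot(\xi-\eta)\geq0$, I would show that $\mathbb{E}\int_{Q_T}(|\nabla T_k(u_n)|^{p-2}\nabla T_k(u_n)-|\nabla T_k(u_m)|^{p-2}\nabla T_k(u_m))\cdot(\nabla T_k(u_n)-\nabla T_k(u_m))\to0$ as $n,m\to\infty$, so that $(\nabla T_k(u_n))_n$ is Cauchy in $L^p$ and converges strongly to $\nabla T_k(u)$, whence $\sigma_k=|\nabla T_k(u)|^{p-2}\nabla T_k(u)$. I expect the delicate point to be controlling the time-derivative pairing on the set where $u_n$ and $u_m$ disagree within the truncation level (since $T_k(u_n)-T_k(u_m)$ is not a function of $w_{n,m}$ alone), the usual source of difficulty in these arguments; it is precisely the cancellation of the common noise that keeps this computation purely pathwise and so within reach of the deterministic techniques.

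With strong convergence of the truncated gradients in hand, the remaining steps are routine. Passing to the limit $n\to\infty$ in the It\^o formula of Proposition \ref{Proposition 4.2} for $u_n$ (with general $S\in\mathcal{C}^2(\mathbb{R})$, $\operatorname{supp}(S')\subset[-M,M]$, and test function $\psi$), using strong $L^1$-convergence of $u_n$ for the zeroth-order and martingale terms and strong $L^p$-convergence of $\nabla T_M(u_n)$ for the two gradient terms, yields the renormalized identity \eqref{reneq190204}, which is condition $(ii)$. Finally, for the energy dissipation $(iii)$ I would use the renormalization $S_k$ with $S_k'=T_{k+1}-T_k$, so that $S_k''=\chi_{\{k<|\cdot|<k+1\}}$; the energy identity for $u_n$ with $\psi\equiv1$ then gives $\mathbb{E}\int_0^T\int_{\{k<|u_n|<k+1\}}|\nabla u_n|^p\leq\mathbb{E}\int_{\{|u_0^n|>k\}}|u_0^n|\,dx+\tfrac12\mathbb{E}\int_0^T\int_{\{k<|u_n|<k+1\}}\Phi^2$, whose right-hand side tends to $0$ as $k\to\infty$ uniformly in $n$ by the uniform integrability of $(u_0^n)$ and of $\Phi^2$ together with $|\{|u_n|>k\}|\to0$; passing to the limit with the strong gradient convergence at a.e.\ level $k$ (where $|\{|u|=k\}|=0$) then yields $(iii)$ and completes the proof.
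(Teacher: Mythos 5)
Your proposal follows the same skeleton as the paper's proof: approximation by strong solutions $u_n$ with $L^2$ data, the limit $u$ from Lemma \ref{Lemma 6.1}, the uniform bound on $T_k(u_n)$ in $L^p(\Omega;L^p(0,T;W^{1,p}_0(D)))$ from Proposition \ref{Proposition 4.2} with $S$ the primitive of $T_k$, weak limits giving condition (i), strong convergence of truncated gradients as the crux for (ii), and an energy identity with $S'=T_{k+1}-T_k$ combined with uniform integrability and a Fatou-type passage to the limit for (iii) --- the latter is exactly the paper's Lemma \ref{190206_lem01} and the computation following it. Up to that point the proposal is sound.

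The gap lies in your plan for the central step, the analogue of the paper's Lemma \ref{Lemma 6.3}. You assert that, because the additive noise cancels in $w_{n,m}=u_n-u_m$, the proof of
\begin{align*}
\lim_{n,m\to\infty}\mathbb{E}\int_{Q_T}\left(|\nabla T_k(u_n)|^{p-2}\nabla T_k(u_n)-|\nabla T_k(u_m)|^{p-2}\nabla T_k(u_m)\right)\cdot\left(\nabla T_k(u_n)-\nabla T_k(u_m)\right)dx\,dt=0
\end{align*}
stays \emph{purely pathwise} and hence within reach of the deterministic renormalization techniques. That is true only for the first ingredient: testing the difference equation with $T_k(u_n-u_m)$ gives, without stochastic calculus, the paper's estimate \eqref{23}. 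But after decomposing the integral over $\{|u_n|\le k\}\cap\{|u_m|\le k\}$ and the cross sets $\{|u_n|\le k\}\cap\{|u_m|>k\}$, you must in particular prove
\begin{align*}
\lim_{n,m\to\infty}\mathbb{E}\int_{\{|u_n|\le k\}\cap\{|u_n-u_m|>k'\}}|\nabla u_n|^p\,dx\,dt=0,
\end{align*}
a quantity involving $|\nabla u_n|^p$ itself (not a difference of fluxes) weighted by a function of $u_n-u_m$. The only handle on it is a renormalized energy identity for $u_n$ \emph{alone}, tested against functions of the pair $(u_n,\,u_n-u_m)$, and the equation for $u_n$ alone retains the noise. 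Concretely, one needs an It\^{o} product rule for $Z(u_n-u_m)H(u_n)$ --- Proposition \ref{itoproduct} and Lemma \ref{Lemma 5.6} in the paper --- whose identity carries genuinely stochastic terms, namely the martingale term $\int_0^t\int_D H'(u_n)Z(u_n-u_m)\Phi\,dx\,d\beta$ and the correction $\frac12\int_0^t\int_D H''(u_n)Z(u_n-u_m)\Phi^2\,dx\,ds$, plus a further application of the It\^{o} formula to show that $\delta\,\limsup_n\mathbb{E}\int_{\{k\le|u_n|\le k+1/\delta\}}|\nabla u_n|^p\,dx\,dt$ vanishes as $\delta\to0$. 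None of this can be run pathwise from the difference equation. Nor can you shortcut via $|\{|u_n-u_m|>k'\}|\to0$: the family $|\nabla T_k(u_n)|^p$ is only bounded in $L^1(\Omega\times Q_T)$, and its equi-integrability is essentially what is at stake, so a set of small measure does not carry small energy. Hence the key lemma of your proof is left unproved, and the premise you offer for why it should reduce to the deterministic case is false.
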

\begin{proof}\renewcommand{\qedsymbol}{}
Let $(u_0^n)_n \subset L^2(\Omega \times D)$ be an $\mathcal{F}_0$-measurable sequence such that $u_0^n \to u_0$ in $L^1(\Omega \times D)$. Now, let $u_n$ be a strong solution to (\ref{1}) with initial value $u_0^n$, i.e.,
\begin{align}\label{20}
u_n(t) - u_0^n - \int_0^t \textnormal{div}\, (|\nabla u_n|^{p-2} \nabla u_n) \,ds = \int_0^t \Phi \,d\beta
\end{align}
for all $t \in [0,T]$ and a.s. in $\Omega$. By Lemma \ref{Lemma 6.1} there exists an $\mathcal{F}_t$-adapted stochastic process $u: \Omega \times [0,T] \to L^1(D)$ such that $u \in L^1(\Omega; \mathcal{C}([0,T]; L^1(D)))$ and a subsequence in $n$ such that
\begin{align*}
u_n \to u ~~~\textnormal{in}~L^1(\Omega; \mathcal{C}([0,T]; L^1(D)))~\textnormal{and in}~\mathcal{C}([0,T]; L^1(D))~\textnormal{a.s. in}~\Omega.
\end{align*}
We claim that this function $u$ is a renormalized solution to \eqref{1} with initial value $u_0$.\\
\newline
Firstly, we apply the It\^{o} formula introduced in Proposition \ref{Proposition 4.2} to equality \eqref{20}. Therefore we know that for all $\psi \in C^{\infty}([0,T] \times \overline{D})$ and all $S \in W^{2,\infty}(\mathbb{R})$ such that $S''$ is piecewise continuous and $S'(0)=0$ or $\psi(t,x) =0$ for all $(t,x) \in [0,T] \times\partial D$ the equality
\begin{align}\label{190206_03}
&\int_D S(u_n(t))\psi(t) - S(u_0^n) \psi(0) \,dx + \int_0^t \int_D S''(u_n) |\nabla u_n|^p \psi \,dx\,ds \notag \\
&+ \int_0^t \int_D S'(u_n) |\nabla u_n|^{p-2} \nabla u_n \cdot \nabla \psi \,dx\,ds\\
= &\int_0^t \int_D S'(u_n) \psi \Phi \,dx\,d\beta + \int_0^t \int_D S(u_n) \psi_t \,dx\,ds + \frac{1}{2} \int_0^t \int_D  S''(u_n) \psi \Phi^2 \,dx\,ds \notag
\end{align}
holds true for all $t \in [0,T]$ and a.s. in $\Omega$.\\
Now, we plug $S(r)=\int_0^{r} T_k(\overline{r}) \,d\overline{r}$  into \eqref{190206_03}.
Using $\psi=1$ we get
\begin{align*}
&\int_D \int_{u_0^n}^{u_n(t)} T_k(r) \,dr\,dx + \int_0^t \int_D \chi_k(u_n) |\nabla u_n|^p \,dx\,ds\\
= &\int_0^t \int_D T_k(u_n) \Phi \,dx\,d\beta + \frac{1}{2} \int_0^t \int_D  \chi_k(u_n) \Phi^2 \,dx\,ds
\end{align*}
for all $k>0$, all $t \in [0,T]$ and a.s. in $\Omega$. Taking expectation yields
\begin{align*}
&\mathbb{E} \int_D \int_{u_0^n}^{u_n(t)} T_k(r) \,drdx + \mathbb{E}\int_0^t \int_D |\nabla T_k(u_n)|^p \,dx\,ds\\
= \frac{1}{2} &\mathbb{E} \int_0^t \int_D  \chi_k(u_n) \Phi^2 \,dx\,ds
\end{align*}
for all $k>0$, all $t \in [0,T]$ and a.s. in $\Omega$. The first term on the left hand side is nonnegative. Since $|\chi_k| \leq 1$ we may conclude that $T_k(u_n)$ is bounded in $L^p(\Omega; L^p(0,T;W_0^{1,p}(D)))$ for all $k>0$.\\
Hence for a subsequence we have $T_k(u_n) \rightharpoonup T_k(u)$ in $L^p(\Omega; L^p(0,T;W_0^{1,p}(D)))$ for all $k>0$, which claims (i).\\
Furthermore, $ |\nabla T_k(u_n)|^{p-2} \nabla T_k(u_n) $ is bounded in $L^{p'}(\Omega \times Q_T)^d$. Consequently, there exists a not relabeled subsequence of $n$ such that $|\nabla T_k(u_n)|^{p-2} \nabla T_k(u_n) \rightharpoonup \sigma_k$ in $L^{p'}(\Omega \times Q_T)^d$.\\
\newline
Obviously, the proof of (ii) is done as far as we can show that 
\begin{align*}
T_k(u_n) \to T_k(u)~~~ \textnormal{in} ~~~L^p(\Omega; L^p(0,T;W_0^{1,p}(D)))
\end{align*}
for all $k>0$. 
This will be done in the following lemma that is inspired by Theorem 2 of \cite{DB}.
\end{proof}
\begin{lemma}\label{Lemma 6.3}
Under the assumptions of Theorem \ref{Theorem 6.2} we have
\begin{align}\label{22}
\lim\limits_{n,m \to \infty} \mathbb{E} &\int_0^T \int_D \bigg( |\nabla T_k(u_n)|^{p-2} \nabla T_k(u_n) - |\nabla T_k(u_m)|^{p-2} \nabla T_k(u_m) \bigg) \cdot  \\
&\cdot( \nabla T_k(u_n) - \nabla T_k(u_m)) \,dx\,ds=0, \notag
\end{align}
where $u_n$ is a strong solution to \eqref{1} with initial value $u_0^n \in L^2(\Omega \times D)$ satisfying $u_0^n \to u_0$ in $L^1(\Omega \times D)$.\\
Especially, we have
\begin{align*}
\nabla T_k(u_n) \to \nabla T_k(u)~~~ \textnormal{in} ~~~L^p(\Omega \times Q_T)^d
\end{align*}
and
\begin{align*}
T_k(u_n) \to T_k(u)~~~ \textnormal{in} ~~~L^p(\Omega; L^p(0,T;W_0^{1,p}(D)))
\end{align*}
for $n \to \infty$ and for all $k>0$.
\end{lemma}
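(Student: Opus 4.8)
The plan is to exploit the cancellation of the It\^o noise in the difference of two strong solutions, which reduces the heart of the matter to a pathwise, essentially deterministic monotonicity estimate in the spirit of the proof of Theorem \ref{Theorem 3.1}. Writing $w_{n,m}:=u_n-u_m$ and $a(\xi):=|\xi|^{p-2}\xi$, subtraction of the two instances of \eqref{20} makes the stochastic integrals disappear, so that $w_{n,m}$ satisfies \eqref{2} pathwise with $G=a(\nabla u_n)-a(\nabla u_m)\in L^{p'}$, $f=0$, $g=0$ and initial datum $u_0^n-u_0^m$. Since $u_n,u_m\in L^p(\Omega;L^p(0,T;W_0^{1,p}(D)))\cap L^2(\Omega;\mathcal{C}([0,T];L^2(D)))$, Proposition \ref{Proposition 4.2} applies to $w_{n,m}$.

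Applying that It\^o formula to $w_{n,m}$ with $\psi\equiv1$ and $S$ the primitive of $T_k$ (so that $S'=T_k$ and $S''=\chi_{(-k,k)}$), the absence of a noise term ($g=0$) and the chain rule turn the flux contribution into the monotone pairing restricted to $\{|u_n-u_m|<k\}$, yielding
\[\mathbb{E}\int_0^T\!\!\int_D\chi_{\{|u_n-u_m|<k\}}\big(a(\nabla u_n)-a(\nabla u_m)\big)\cdot(\nabla u_n-\nabla u_m)\,dx\,ds=\mathbb{E}\int_D\big[\Theta_k(u_0^n-u_0^m)-\Theta_k(w_{n,m}(T))\big]\,dx,\]
with $\Theta_k=\int_0^{\cdot}T_k$. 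As $|\Theta_k(r)|\le k|r|$ and, by Lemma \ref{Lemma 6.1}, both $u_0^n-u_0^m\to0$ and $w_{n,m}(T)\to0$ in $L^1(\Omega\times D)$, the right-hand side tends to $0$. Because $u_n-u_m\to0$ in $L^1$ and the integrand is nonnegative, I pass to a subsequence along which $\big(a(\nabla u_n)-a(\nabla u_m)\big)\cdot(\nabla u_n-\nabla u_m)\to0$ a.e.\ in $\Omega\times Q_T$; the algebraic monotonicity bounds for the $p$-Laplace field (from below by $c_p|\xi-\eta|^p$ for $p\ge2$, resp.\ by $c_p|\xi-\eta|^2(1+|\xi|+|\eta|)^{p-2}$ for $1<p<2$) then force $(\nabla u_n)$ to be a.e.\ Cauchy, hence $\nabla u_n\to\nabla u$ a.e., the limit being the generalized gradient of $u$ (consistent with $\nabla T_k(u_n)\rightharpoonup\nabla T_k(u)$).

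It remains to upgrade this to the quantitative statement \eqref{22}, and here lies the main difficulty. Decomposing the integrand of \eqref{22} over $\{|u_n|<k,|u_m|<k\}$ and its complement, the bulk part coincides with the pairing above restricted to a subset of $\{|u_n-u_m|<2k\}$ and is thus controlled by the previous step, whereas the complementary part reduces, on $\{|u_n|\ge k,\,|u_m|<k\}$, to $|\nabla T_k(u_m)|^p$ (and symmetrically). These tail contributions cannot be handled by a.e.\ convergence alone, since $|\nabla T_k(u_n)|^p$ is only bounded, not equi-integrable, in $L^1$; ruling out a concentration of gradient energy across the truncation level is the crux. The tool I would use is a uniform energy-dissipation estimate: plugging $S'=T_k$, $\psi\equiv1$ into \eqref{190206_03} as in the proof of Theorem \ref{Theorem 6.2} gives the level-wise energy identity
\[\mathbb{E}\int_0^T\!\!\int_D|\nabla T_k(u_n)|^p\,dx\,ds=\tfrac12\mathbb{E}\int_0^T\!\!\int_D\chi_{(-k,k)}(u_n)\Phi^2\,dx\,ds+\mathbb{E}\int_D\big[\Theta_k(u_0^n)-\Theta_k(u_n(T))\big]\,dx,\]
and subtracting the identities at levels $k-\delta$ and $k$ bounds the annular energy $\mathbb{E}\int_0^T\!\!\int_{\{k-\delta<|u_n|<k\}}|\nabla u_n|^p$ by a quantity $\eta(\delta)$ that is uniform in $n$ and tends to $0$ as $\delta\to0$, thanks to the equi-integrability of $\{u_0^n\}$ and $\{u_n(T)\}$ (which are $L^1$-convergent) and of $\Phi^2$ over the level sets $\{|u_n|>k-\delta\}$, whose measure is $O(1/k)$ uniformly in $n$. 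Splitting the tail region along $\{k-\delta<|u_m|<k\}$ versus $\{|u_m|\le k-\delta\}\subset\{|u_n-u_m|\ge\delta\}$ and combining $\eta(\delta)$ with the a.e.\ gradient convergence and the vanishing in measure of $\{|u_n-u_m|\ge\delta\}$ makes the tail arbitrarily small, which yields \eqref{22}.

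Once \eqref{22} is established, the strong convergence follows from the same algebraic inequalities: for $p\ge2$ the integrand of \eqref{22} dominates $c_p|\nabla T_k(u_n)-\nabla T_k(u_m)|^p$, so $(\nabla T_k(u_n))_n$ is Cauchy in $L^p(\Omega\times Q_T)^d$; for $1<p<2$ the same conclusion follows after a H\"older argument, estimating $\mathbb{E}\int\!\!\int|\nabla T_k(u_n)-\nabla T_k(u_m)|^p$ by a power of the left-hand side of \eqref{22} times the uniformly bounded factor $\big(\mathbb{E}\int\!\!\int(|\nabla T_k(u_n)|+|\nabla T_k(u_m)|)^p\big)^{(2-p)/2}$. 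The $L^p$-limit is identified with $\nabla T_k(u)$ through the weak limit, giving $\nabla T_k(u_n)\to\nabla T_k(u)$ in $L^p(\Omega\times Q_T)^d$; since $T_k(u_n)\to T_k(u)$ in $L^p(\Omega\times Q_T)$ by dominated convergence, I conclude $T_k(u_n)\to T_k(u)$ in $L^p(\Omega;L^p(0,T;W_0^{1,p}(D)))$ for every $k>0$. I expect the tail/energy-concentration control of the third paragraph to be the main obstacle.
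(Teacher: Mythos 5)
Your opening steps are sound and coincide with the paper's: subtracting the two strong equations cancels the noise, and testing with the primitive of $T_k$ gives exactly the paper's key limit \eqref{23}, which controls the bulk region $\{|u_n|<k\}\cap\{|u_m|<k\}$ of \eqref{22}; the final passage from \eqref{22} to strong convergence of $\nabla T_k(u_n)$ is also fine. The genuine gap is in the tail terms $\mathbb{E}\int_{\{|u_n|\ge k,\,|u_m|<k\}}|\nabla T_k(u_m)|^p\,dx\,dt$, i.e.\ exactly the part you flag as the crux, and both of your claims there fail. First, your thin-annulus energy bound is not uniform in $n$: subtracting the level-$k$ and level-$(k-\delta)$ energy identities leaves the term $\tfrac12\,\mathbb{E}\iint\chi_{\{k-\delta\le|u_n|<k\}}\Phi^2\,dx\,dt$, and for this to be $o(1)$ as $\delta\to0$ uniformly in $n$ you would need the measure of the thin annulus $\{k-\delta\le|u_n|<k\}$ to be small uniformly in $n$. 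The Chebyshev bound $O(1/k)$ you invoke is small in $k$, not in $\delta$: nothing in the available estimates prevents $u_n$ from concentrating near the level $k$ (say $|u_n|\in(k-1/n,k)$ on a set of fixed positive measure), in which case $\sup_n\mathbb{E}\iint\chi_{\{k-\delta\le|u_n|<k\}}\Phi^2$ does not vanish as $\delta\to0$. (A successive-limit variant, using $u_n\to u$ in measure, only bounds the $\limsup$ by $\mathbb{E}\iint\chi_{\{|u|=k\}}\Phi^2$, which vanishes for all but countably many $k$, whereas the lemma is claimed for all $k>0$.) Second, on the far region $\{|u_m|\le k-\delta\}\subset\{|u_n-u_m|\ge\delta\}$ you integrate $|\nabla T_k(u_m)|^p$ over a set of vanishing measure; since $|\nabla T_k(u_m)|^p$ is only bounded in $L^1(\Omega\times Q_T)$, and its equi-integrability is essentially the conclusion the lemma is meant to deliver, vanishing measure plus a.e.\ convergence of gradients gives nothing (Fatou goes the wrong way), so this step is circular. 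Incidentally, your extraction of a.e.\ Cauchyness of $\nabla u_n$ from \eqref{23} also needs the standard Cauchy-in-measure argument rather than a bare subsequence extraction (a.e.\ convergence along one diagonal pairing $(n_j,m_j)$ does not make the whole sequence a.e.\ Cauchy), but that part is repairable.

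The paper closes exactly this hole with a tool your argument never produces: the It\^{o} \emph{product} rule (Proposition \ref{itoproduct}) applied to the two-process functional $Z(u_n-u_m)H(u_n)$, which yields Lemma \ref{Lemma 5.6}, i.e.\ $\lim_{n,m\to\infty}\mathbb{E}\int_0^T\int_D H''(u_n)Z(u_n-u_m)|\nabla u_n|^p\,dx\,dt=0$. This mixed term, coupling the gradient of one solution with the difference of the two, cannot be reached by applying the scalar It\^{o} formula (Proposition \ref{Proposition 4.2}) to $u_n$ and to $u_n-u_m$ separately, which is all your scheme uses. Moreover, the annular energy control in the paper is done with a \emph{wide} annulus $\{k\le|u_n|\le k+\tfrac1\delta\}$ weighted by the small factor $k\delta$ (the choice $H_k^\delta$ together with the test function $\tilde{\theta}_k^{1/\delta}$), so that the $\Phi^2$-contribution carries the explicit prefactor $\delta$ and no uniform thin-annulus estimate is ever needed; this is precisely how the concentration obstruction you correctly identified is circumvented. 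As it stands, your proposal reproduces the easy half of the paper's argument but replaces its essential stochastic ingredient with two estimates that do not hold, so the proof is incomplete.
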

\begin{remark}\label{Remark 6.4}
In Lemma \ref{Lemma 6.3} and in the following we use the notation $\lim\limits_{n,m \to \infty} F_{n,m}$ if $n$ and $m$ tend successively to $\infty$ and
\begin{align*}
\lim\limits_{m \to \infty} \lim\limits_{n \to \infty} F_{n,m} = \lim\limits_{n \to \infty} \lim\limits_{m \to \infty} F_{n,m}.
\end{align*}
\end{remark}
\begin{proof}
Since $u_n$ and $u_m$ are strong solutions to \eqref{1}, we consider the difference of the corresponding equations. Using $T_k(u_n - u_m)$ as a test function it yields
\begin{align*}
&\int_D \tilde{T}_k(u_n(T) - u_m(T)) \,dx \\
+ &\int_0^T \int_D (|\nabla u_n|^{p-2} \nabla u_n - |\nabla u_m|^{p-2} \nabla u_m) \cdot \nabla T_k(u_n - u_m) \,dx\,dt\\
= &\int_D \tilde{T}_k(u_0^n - u_0^m) \,dx
\end{align*}
a.s. in $\Omega$ and for all $k>0$, where $\tilde{T}_k(s) := \int_0^s T_k(r) \,dr$ for all $s \in \mathbb{R}$. Since $\tilde{T}_k$ is nonnegative we may conclude that
\begin{align}\label{23}
\lim\limits_{n,m \to \infty} \mathbb{E} \int_0^T \int_D (|\nabla u_n|^{p-2} \nabla u_n - |\nabla u_m|^{p-2} \nabla u_m) \cdot \nabla T_k(u_n - u_m) \,dx\,dt=0
\end{align}
for all $k>0$. We set 
\begin{align*}
&\int_0^T \int_D \bigg( |\nabla T_k(u_n)|^{p-2} \nabla T_k(u_n) - |\nabla T_k(u_m)|^{p-2} \nabla T_k(u_m) \bigg) \cdot  \\
&\cdot( \nabla T_k(u_n) - \nabla T_k(u_m)) \,dx\,dt \\
&= I_k^{n,m} + J_k^{n,m} + J_k^{m,n},
\end{align*}
a.s. in $\Omega$, where
\begin{align*}
I_k^{n,m} &= \int_{\{|u_n| \leq k\} \cap \{|u_m| \leq k\}} (|\nabla u_n|^{p-2} \nabla u_n - |\nabla u_m|^{p-2} \nabla u_m) \cdot \nabla (u_n - u_m) \,dx\,dt, \\
J_k^{n,m} &= \int_{\{|u_n| \leq k\} \cap \{|u_m| > k\}} |\nabla u_n|^{p-2} \nabla u_n \cdot \nabla u_n \,dx\,dt
\end{align*}
a.s. in $\Omega$. $J_k^{m,n}$ is the same as $J_k^{n,m}$ where the roles of $n$ and $m$ are reversed. Therefore these two terms can be treated simultaneously.\\
Since $ \{|u_n| \leq k\} \cap \{|u_m| \leq k\} \subset \{ |u_n - u_m| \leq 2k \}$, we get
\begin{align*}
0 &\leq \lim\limits_{n,m \to \infty} \mathbb{E} I_k^{n,m} \\
&\leq \lim\limits_{n,m \to \infty} \mathbb{E} \int_0^T \int_D (|\nabla u_n|^{p-2} \nabla u_n - |\nabla u_m|^{p-2} \nabla u_m) \cdot \nabla T_{2k}(u_n - u_m) \,dx\,dt =0
\end{align*}
for all $k>0$ by \eqref{23}. Now we set
\begin{align*}
0 \leq J_k^{n,m} = J_{1,k,k'}^{n,m} + J_{2,k,k'}^{n,m},
\end{align*}
where
\begin{align*}
J_{1,k,k'}^{n,m} &= \int_{\{|u_n| \leq k\} \cap \{|u_m| > k\} \cap \{|u_n - u_m| \leq k'\}} |\nabla u_n|^{p-2} \nabla u_n \cdot \nabla u_n \,dx\,dt,\\
J_{2,k,k'}^{n,m} &=\int_{\{|u_n| \leq k\} \cap \{|u_m| > k\} \cap \{|u_n - u_m| > k'\}} |\nabla u_n|^{p-2} \nabla u_n \cdot \nabla u_n \,dx\,dt
\end{align*}
for all $k'>k>0$, a.s. in $\Omega$. Firstly, we focus on $J_{1,k,k'}^{n,m}$. It is $ \{|u_n| \leq k\} \cap \{|u_m| > k\} \cap \{|u_n - u_m| \leq k'\} \subset \{|u_n| \leq k\} \cap \{k < |u_m| \leq k+k' \}$. Therefore we can estimate
\begin{align*}
0 &\leq J_{1,k,k'}^{n,m} \\
&\leq \int_{\{|u_n| \leq k\} \cap \{k < |u_m| \leq k+k' \}} (|\nabla u_n|^{p-2} \nabla u_n - |\nabla u_m|^{p-2} \nabla u_m) \cdot \nabla (u_n - u_m) \,dx\,dt \\
&+ \int_{\{|u_n| \leq k\} \cap \{k < |u_m| \leq k+k' \}} |\nabla u_n|^{p-2} \nabla u_n \nabla u_m \,dx\,dt\\
&+ \int_{\{|u_n| \leq k\} \cap \{k < |u_m| \leq k+k' \}} |\nabla u_m|^{p-2} \nabla u_m \nabla u_n \,dx\,dt\\
&= J_{1,1,k,k'}^{n,m} +J_{1,2,k,k'}^{n,m} + J_{1,3,k,k'}^{n,m}
\end{align*}
a.s. in $\Omega$. We see that \eqref{23} yields
\begin{align*}
0 &\leq \lim\limits_{n,m \to \infty} \mathbb{E} J_{1,1,k,k'}^{n,m} \\
&\leq \lim\limits_{n,m \to \infty} \mathbb{E} \int_0^T \int_D (|\nabla u_n|^{p-2} \nabla u_n - |\nabla u_m|^{p-2} \nabla u_m) \cdot \nabla T_{2k + k'}(u_n - u_m) \,dx\,dt = 0.
\end{align*}
It is
\begin{align*}
\mathbb{E} J_{1,2,k,k'}^{n,m} = \mathbb{E} \int_0^T \int_D |\nabla T_k(u_n)|^{p-2} \nabla T_k(u_n) \cdot \nabla T_{k+k'}(u_m) \chi_{\{k < |u_m| \leq k+k' \}} \,dx\,dt.
\end{align*}
Let us define $\theta_k^{k'}(r):= T_{k+k'}(r) - T_k(r)$. \\
Then $\nabla \theta_k^{k'}(u_m) = \nabla T_{k+k'}(u_m) \chi_{\{k < |u_m| < k+k' \}}$ and $\nabla \theta_k^{k'}(u_m) \rightharpoonup \nabla \theta_k^{k'}(u)$ in $L^{p}(\Omega \times Q_T)^d$. Now we can estimate
\begin{align*}
0 \leq \lim\limits_{n,m \to \infty} \mathbb{E} J_{1,2,k,k'}^{n,m} = \mathbb{E} \int_0^T \int_D \sigma_k \nabla \theta_k^{k'}(u) \,dx\,dt.
\end{align*}
We show that $\sigma_k= \chi_{\{|u| < k\}} \sigma_{k+1}$ a.e. on $\{|u| \neq k \}$. If we do so it follows that $\sigma_k=0$ a.e. on $\{|u| > k \}$. Since $\nabla \theta_k^{k'}(u)=0$ on $\{ |u| \leq k \}$ it follows $\lim\limits_{n,m \to \infty} \mathbb{E} J_{1,2,k,k'}^{n,m}=0$.\\
Let $\psi \in L^p(\Omega \times Q_T)^d$. Then
\begin{align*}
\lim\limits_{n \to \infty} \mathbb{E} \int_{Q_T} |\nabla T_k(u_n)|^{p-2} \nabla T_k(u_n) \cdot \psi \cdot \chi_{\{|u| \neq k \}} \,dx\,dt = \mathbb{E} \int_{Q_T} \sigma_k \psi \cdot \chi_{\{|u| \neq k \}} \,dx\,dt.
\end{align*}
On the other hand we know that $u_n \to u$ a.e. in $\Omega \times Q_T$. Hence, we have $\chi_{\{|u_n|<k \}} \to \chi_{\{|u|<k \}}$ a.e. in $\{|u| \neq k \}$. Therefore the theorem of Lebesgue yields
\begin{align*}
\chi_{\{|u_n|<k \}} \cdot \chi_{\{|u| \neq k \}} \cdot \psi \to \chi_{\{|u|<k \}} \cdot \chi_{\{|u|\neq k \}} \cdot \psi ~~~\textnormal{in}~ L^p(\Omega \times Q_T)^d.
\end{align*}
We may conclude that
\begin{align*}
&\lim\limits_{n \to \infty} \mathbb{E} \int_{Q_T} |\nabla T_k(u_n)|^{p-2} \nabla T_k(u_n) \cdot \psi \cdot \chi_{\{|u| \neq k \}} \,dx\,dt  \\
= &\lim\limits_{n \to \infty} \mathbb{E} \int_{Q_T} |\nabla T_{k+1}(u_n)|^{p-2} \nabla T_{k+1}(u_n) \cdot \psi \cdot \chi_{\{|u| \neq k \}} \cdot \chi_{\{|u_n|<k \}} \,dx\,dt \\
= & \mathbb{E} \int_{Q_T} \sigma_{k+1} \psi \chi_{\{|u| \neq k \}} \cdot \chi_{\{|u|<k \}} \,dx\,dt.
\end{align*}
It follows that $\sigma_k= \chi_{\{|u| < k\}} \sigma_{k+1}$ a.e. on $\{|u| \neq k \}$ and therefore 
\begin{align*}
\lim\limits_{n,m \to \infty} \mathbb{E} J_{1,2,k,k'}^{n,m}=0.
\end{align*}
Now let us consider $ J_{1,3,k,k'}^{n,m}$. Since $|\nabla \theta_k^{k'}(u_n)|^{p-2} \nabla \theta_k^{k'}(u_n) \rightharpoonup \tilde{\sigma}_k^{k'}$ in $L^{p'}(\Omega \times Q_T)$ for a subsequence we have
\begin{align*}
\lim\limits_{n,m \to \infty} \mathbb{E} J_{1,3,k,k'}^{n,m} = \mathbb{E} \int_0^T \int_D \tilde{\sigma}_k^{k'} \cdot \nabla T_k(u) \,dx\,dt.
\end{align*}
Since $\nabla \theta_k^{k'}(v)= \chi_{\{k < |v| < k+k'\}} \nabla \theta_{k-1}^{k'+2}(v)$ for all $v \in L^p(\Omega; L^p(0,T;W_0^{1,p}(D)))$, we can show by similar arguments as before that $\tilde{\sigma}_k^{k'}=  \chi_{\{k < |u| < k+k'\}}\tilde{\sigma}_{k-1}^{k'+2}$ a.e. on $\{|u| \neq k\} \cup \{|u| \neq k+k'\}$. Therefore it follows that
\begin{align*}
\lim\limits_{n,m \to \infty} \mathbb{E} J_{1,3,k,k'}^{n,m}=0.
\end{align*}
It is left to show that
\begin{align*}
\lim\limits_{n,m \to \infty} \mathbb{E} J_{2,k,k'}^{n,m}=0.
\end{align*}
This is a consequence of the following lemma, that is similar to Lemma 2 in \cite{DB} (see also Theorem 2 in \cite{DB}).
\begin{lemma}\label{Lemma 5.6}
Let $H$ and $Z$ be two real valued functions belonging to $W^{2,\infty}(\mathbb{R})$ such that $H''$ and $Z''$ are piecewise continuous, $H'$ and $Z'$ have compact supports and $Z(0)=Z'(0)=0$ is satisfied. Then
\begin{align}\label{7}
\lim\limits_{n,m \to \infty} \mathbb{E} \int_0^T \int_D H''(u_n) Z(u_n - u_m) |\nabla u_n|^p \,dx\,dt =0.
\end{align}
\end{lemma}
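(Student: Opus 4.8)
The plan is to test the renormalized equation for $H(u_n)$ against $Z(w)$, where $w:=u_n-u_m$, and to read off \eqref{7} as the only non-obviously-vanishing term in the resulting identity. Writing $G_n:=|\nabla u_n|^{p-2}\nabla u_n$, the decisive structural observation is that, since $u_n$ and $u_m$ are driven by the same noise $\Phi\,d\beta$, the difference solves the \emph{noise-free} equation $dw=\operatorname{div}(G_n-G_m)\,dt$; hence $Z(w)$ is of finite variation in time and carries no martingale part. This lets me apply an It\^o product rule to $H(u_n)Z(w)$ with vanishing cross-variation, which is legitimate in the present framework by Proposition \ref{Proposition 4.2} and Remark \ref{Remark 5.2} (the renormalized formulation \eqref{SPDE1} supplies $dH(u_n)$, and the pathwise chain rule supplies $dZ(w)$). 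Using the identity $H''(u_n)|\nabla u_n|^p=G_n\cdot\nabla H'(u_n)$, the target \eqref{7} appears with a definite sign, so it suffices to show that every other term in the product-rule identity tends to $0$.

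After integration by parts in space, these other terms fall into three groups. First, the time-boundary contributions $\mathbb{E}\int_D H(u_n)Z(w)\,dx$ evaluated at $t=0$ and $t=T$, and the It\^o correction $\tfrac12\mathbb{E}\int_0^T\int_D H''(u_n)\Phi^2 Z(w)$. Since $u_n-u_m\to0$ in $L^1(\Omega;\mathcal{C}([0,T];L^1(D)))$ by Lemma \ref{Lemma 6.1}, we have $Z(w)\to Z(0)=0$ and $Z'(w)\to Z'(0)=0$ a.e.\ and boundedly; combined with $\|H\|_\infty,\|H''\|_\infty<\infty$ and $\Phi^2\in L^1(\Omega\times Q_T)$ these all vanish by dominated convergence. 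Second, the terms carrying a factor $Z''(w)$ multiply the monotone density $(G_n-G_m)\cdot\nabla w\ge 0$; using that $Z''$ has compact support, say $\operatorname{supp}Z''\subset[-M',M']$, they are bounded in absolute value by $C\,\mathbb{E}\int_0^T\int_D(G_n-G_m)\cdot\nabla T_{M'}(w)$, which tends to $0$ by \eqref{23}.

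The third group, carrying a factor $Z'(w)$, is the heart of the matter; its problematic constituent is $\mathbb{E}\int_0^T\int_D H'(u_n)Z'(w)|\nabla u_n|^p$. Here $Z'(w)\to0$ only a.e.\ and weak-$\ast$, whereas on $\operatorname{supp}H'$ the density $|\nabla u_n|^p=|\nabla T_M(u_n)|^p$ is merely bounded in $L^1(\Omega\times Q_T)$ and not equi-integrable, so a direct weak-convergence argument fails. I plan to treat it by the algebraic splitting $|\nabla u_n|^p=(G_n-G_m)\cdot\nabla w+(G_n-G_m)\cdot\nabla u_m+G_m\cdot\nabla u_n$: the monotone part $(G_n-G_m)\cdot\nabla w$ is absorbed by \eqref{23} on $\operatorname{supp}Z'$, while the genuinely mixed products $G_m\cdot\nabla u_n$ and $G_n\cdot\nabla u_m$ are evaluated through the ordered limits of Remark \ref{Remark 6.4}. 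Sending $n\to\infty$ at fixed $m$ turns $\nabla u_m$ and $G_m$ into \emph{fixed} functions in $L^p$ and $L^{p'}$, against which the weakly convergent $\nabla T_M(u_n)\rightharpoonup\nabla T_M(u)$ in $L^p$ and $G_n\chi_{\{|u_n|\le M\}}\rightharpoonup\sigma_M$ in $L^{p'}$ converge by weak-times-strong convergence; letting $m\to\infty$, the surviving integrands are a fixed $L^p$- (resp.\ $L^{p'}$-) function times an $L^{p'}$- (resp.\ $L^p$-) bounded family times the factor $Z'(u-u_m)\to0$ a.e., which is uniformly integrable (a fixed $L^p$ function against an $L^{p'}$-bounded family is equi-integrable by absolute continuity of the integral) and hence vanishes by Vitali's theorem. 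All gradient products must first be localized, via the compact supports of $H'$, $Z'$, $Z''$, to a region where $u_n$, $u_m$ and $w$ are truncated, so that every occurring gradient is that of a fixed-level truncation and the weak-convergence and equi-integrability arguments apply.

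I expect this self-energy term to be the only genuinely delicate point: the quadratic vanishing $Z(0)=Z'(0)=0$ is what renders the boundary and correction terms negligible and, through $|Z'(r)|\le\|Z''\|_\infty|r|$, quantifies the smallness that is needed, while the monotonicity of the $p$-Laplacian together with the Cauchy estimate \eqref{23} and the iterated limits of Remark \ref{Remark 6.4} replace the equi-integrability of $|\nabla T_k(u_n)|^p$ that is unavailable for merely $L^1$ data. The hard part will be to show that the reduction just described truly closes, i.e.\ that the companion self-energy of $u_m$ reintroduced by the splitting is itself folded back into monotone densities controlled by \eqref{23} and into equi-integrable products, so that no self-energy term survives the iterated limit.
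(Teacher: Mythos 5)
Your setup coincides with the paper's own proof: the It\^{o} product rule applied to $H(u_n)Z(u_n-u_m)$ is exactly Proposition \ref{itoproduct} (your observation that $w:=u_n-u_m$ satisfies a noise-free equation, hence contributes no martingale part and no cross-variation, is precisely what makes that proposition work), and your treatment of the time-boundary terms and the It\^{o} correction (via $Z(0)=Z'(0)=0$, the $L^1$-Cauchy property and dominated convergence) and of the $Z''(w)$-term (via its compact support, the sign of the monotone density and \eqref{23}) matches the paper's reduction of \eqref{7} to the vanishing of the three terms $L^{n,m}$, $M^{n,m}$, $N^{n,m}$. Up to that point the proposal is correct and is the same argument.

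The gap is in your treatment of the $Z'(w)$-terms $M^{n,m}$, $N^{n,m}$, and you concede it yourself in your last sentence. The splitting $|\nabla u_n|^p=(G_n-G_m)\cdot\nabla w+(G_n-G_m)\cdot\nabla u_m+G_m\cdot\nabla u_n$ is algebraically correct, and your weak-times-strong/Vitali handling of the genuinely mixed products $G_m\cdot\nabla u_n$ and $G_n\cdot\nabla u_m$ does work; but the middle term contains $-|\nabla u_m|^p$, so the splitting only trades the self-energy of $u_n$ for that of $u_m$. Concretely, with $\operatorname{supp}H'\subset[-M,M]$ and $\operatorname{supp}Z'\subset[-M',M']$, after your first limit $n\to\infty$ at fixed $m$ (which is legitimate, everything being dominated by the fixed function $C|\nabla T_{M+M'}(u_m)|^p\in L^1$) you are left with $\mathbb{E}\int_0^T\int_D Z'(u-u_m)H'(u)\,|\nabla T_{M+M'}(u_m)|^p\,dx\,dt$ as $m\to\infty$, which is exactly the obstruction you started from: a bounded factor tending to $0$ only a.e.\ against a family that is merely bounded in $L^1(\Omega\times Q_T)$ and not known to be equi-integrable. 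Vitali's theorem does not apply; \eqref{23} does not apply because $|\nabla T_{M+M'}(u_m)|^p$ is not a monotone difference density; and the strong convergence $\nabla T_k(u_m)\to\nabla T_k(u)$ in $L^p(\Omega\times Q_T)^d$, which would supply the missing equi-integrability, is unavailable here, since it is the conclusion of Lemma \ref{Lemma 6.3}, whose proof rests on this very lemma --- invoking it would be circular. So your reduction replaces \eqref{7} by a statement of identical difficulty rather than proving it. The paper closes precisely this gap not by such a splitting but by carrying over the deterministic truncation/monotonicity argument of \cite{DB}, Theorem 2, for the terms $M^{n,m}$ and $N^{n,m}$; without that ingredient (or an equivalent one) the proof is incomplete. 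A minor point in addition: the target term does not appear ``with a definite sign'' --- neither $H''$ nor $Z$ is signed --- but this is harmless, since your argument only needs that all other terms in the identity vanish in the limit.
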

\begin{proof}
Using the product rule for the It\^{o} formula (see Proposition \ref{itoproduct}) yields
\begin{align*}
&\int_D Z(u_n(t) - u_m(t)) H(u_n(t)) \,dx= \int_D Z(u_0^n - u_0^m)H(u_0^n) \,dx \\
- &\int_0^t \int_D |\nabla u_n|^{p-2} \nabla u_n \nabla \bigg(Z(u_n - u_m) H'(u_n) \bigg) \,dx\,ds \\
+ &\frac{1}{2} \int_0^t \int_D H''(u_n)Z(u_n - u_m) \Phi^2 \,dx\,ds \\
+ &\int_0^t \int_D H'(u_n)Z(u_n - u_m) \Phi \,dx\,d\beta \\
- &\int_0^t \int_D (|\nabla u_n|^{p-2} \nabla u_n - |\nabla u_m|^{p-2} \nabla u_m) \nabla \bigg( Z'(u_n - u_m)H(u_n) \bigg) \,dx\,ds
\end{align*}
for all $t \in [0,T]$ and a.s. in $ \Omega$. Using $t=T$ and passing to the limit yields
\begin{align*}
&\lim\limits_{n,m \to \infty} \mathbb{E} \int_0^T \int_D H''(u_n) Z(u_n - u_m) |\nabla u_n|^p \,dx\,dt\\
= -&\lim\limits_{n,m \to \infty}  L^{n,m} -\lim\limits_{n,m \to \infty} M^{n,m} -\lim\limits_{n,m \to \infty} N^{n,m},
\end{align*}
where
\begin{align*}
L^{n,m} &=  \mathbb{E} \int_0^T \int_D Z''(u_n - u_m) H(u_n)(|\nabla u_n|^{p-2} \nabla u_n - |\nabla u_m|^{p-2} \nabla u_m) \cdot \\
&\cdot \nabla (u_n - u_m) \,dx\,dt, \\
M^{n,m} &=  \mathbb{E} \int_0^T \int_D Z'(u_n - u_m) H'(u_n)(|\nabla u_n|^{p-2} \nabla u_n - |\nabla u_m|^{p-2} \nabla u_m) \cdot \nabla u_n \,dx\,dt, \\
N^{n,m} &=  \mathbb{E} \int_0^T \int_D Z'(u_n - u_m) H'(u_n)|\nabla u_n|^{p-2} \nabla u_n \cdot \nabla (u_n - u_m) \,dx\,dt.
\end{align*}
The rest of the proof is the same as the proof of \cite{DB}, Theorem 2.
\end{proof}
We continue the proof of Lemma \ref{Lemma 6.3}. Using
\begin{align*}
(H_k^{\delta})''(r)=
\begin{cases}
1, ~&|r|<k, \\
-k \delta, ~&k \leq |r| \leq k+ \frac{1}{\delta}, \\
0, ~&|r| > k + \frac{1}{\delta}
\end{cases}
\end{align*}
in equality \eqref{7} yields
\begin{align*}
&\limsup\limits_{n\to \infty} \limsup\limits_{m \to \infty}  \mathbb{E}\int_{\{|u_n| \leq k\}} Z(u_n - u_m) |\nabla u_n|^p \,dx\,dt\,\\
\leq &k \delta \limsup\limits_{n\to \infty} \limsup\limits_{m \to \infty}  \mathbb{E} \int_{\{k \leq |u_n| \leq k+\frac{1}{\delta}\}} Z(u_n - u_m) |\nabla u_n|^p \,dx\,dt\, \\
\leq &\delta \cdot k \Vert Z \Vert_{\infty} \limsup\limits_{n \to \infty}  \mathbb{E} \int_{\{k \leq |u_n| \leq k+\frac{1}{\delta}\}} |\nabla u_n|^p \,dx\,dt\,.
\end{align*}
Now applying Proposition \ref{Proposition 4.2} with $S= \int_0^{\cdot} \theta_{k}^{\frac{1}{\delta}} =: \tilde{\theta}_k^{\frac{1}{\delta}}$, $\psi= 1$, $g=\Phi$ and $f=0$ and taking expectation yields
\begin{align*}
&\mathbb{E} \int_D \tilde{\theta}_k^{\frac{1}{\delta}}(u_n(T)) \,dx + \mathbb{E} \int_0^T \int_D \chi_{\{k \leq |u_n| \leq u_n + \frac{1}{\delta}\}} |\nabla u_n|^p \,dx \,dt \\
= &\mathbb{E} \int_D \tilde{\theta}_k^{\frac{1}{\delta}}(u_0^n) \,dx + \frac{1}{2} \mathbb{E} \int_0^T \int_D \chi_{\{k \leq |u_n| \leq u_n + \frac{1}{\delta}\}} \Phi^2 \,dx \, dt.
\end{align*}
The first term on the left hand side is nonnegative and the integrand of the second term on the right hand side can be estimated as follows
\begin{align*}
 \chi_{\{k \leq |u_n| \leq u_n + \frac{1}{\delta}\}} \Phi^2 \leq \Phi^2 ~\in ~L^1(\Omega \times Q_T).
\end{align*}
Multiplying by $\delta$ and passing to the limit with $n \to \infty$ yields
\begin{align*}
\delta \cdot \limsup\limits_{n \to \infty}~ \mathbb{E} \int_0^T \int_D \chi_{\{k \leq |u_n| \leq u_n + \frac{1}{\delta}\}} |\nabla u_n|^p \,dx \,dt \leq \mathbb{E} \int_D \delta \tilde{\theta}_k^{\frac{1}{\delta}}(u_0) \,dx + \frac{1}{2} \delta \Vert \Phi \Vert_{L^2(\Omega \times Q_T)}^2.
\end{align*}
We can estimate that $\delta \tilde{\theta}_k^{\frac{1}{\delta}}(u_0) \to 0$ a.e. in $\Omega \times D$ as $\delta \to 0$ and $|\delta \tilde{\theta}_k^{\frac{1}{\delta}}(u_0)| \leq u_0 + C$ for a constant $C>0$. Therefore Lebesgue's Theorem yields
\begin{align*}
\lim\limits_{\delta \to 0} \limsup\limits_{n \to \infty} \delta \cdot \mathbb{E} \int_{\{k \leq |u_n| \leq k+\frac{1}{\delta}\}} Z(u_n - u_m) |\nabla u_n|^p \,dx\,dt\, =0. 
\end{align*}
Therefore we may conclude
\begin{align*}
&\lim\limits_{n,m \to \infty}  \mathbb{E}\int_{\{|u_n| \leq k\}} Z(u_n - u_m) |\nabla u_n|^p \,dx\,dt\,\\
= &\limsup\limits_{n\to \infty} \limsup\limits_{m \to \infty}  \mathbb{E}\int_{\{|u_n| \leq k\}} Z(u_n - u_m) |\nabla u_n|^p \,dx\,dt\,=0.
\end{align*}
Choosing $Z$ such that $Z(r)=1$ for $|r|\geq k'$ and $Z \geq 0$ on $\mathbb{R}$ such that $Z(0)=Z'(0)=0$, it follows
\begin{align*}
0 &\leq \lim\limits_{n,m \to \infty} \mathbb{E} J_{2,k,k'}^{n,m}\\
&= \lim\limits_{n,m \to \infty} \mathbb{E} \int_{\{|u_n| \leq k\} \cap \{|u_m| > k\} \cap \{|u_n - u_m| > k'\}} |\nabla u_n|^{p-2} \nabla u_n \cdot \nabla u_n \,dx\,dt \\
&\leq \lim\limits_{n,m \to \infty}  \mathbb{E} \int_{\{|u_n| \leq k\}} Z(u_n - u_m) |\nabla u_n|^p \,dx\,dt=0,
\end{align*}
which finally shows the validity of equality \eqref{22}.
Since equality \eqref{22} holds true, it follows that
\begin{align}\label{25}
\lim\limits_{n \to \infty} \mathbb{E} \int_0^T \int_D |\nabla T_k(u_n)|^{p-2} \nabla T_k(u_n) \cdot \nabla T_k(u_n) \,dx\,dt= \mathbb{E} \int_0^T \int_D \sigma_k \cdot \nabla T_k(u) \,dx\,dt.
\end{align}
Minty's trick yields $\sigma_k= |\nabla T_k(u)|^{p-2} \nabla T_k(u)$. We may conclude by using equality \eqref{25} that
\begin{align*}
\lim\limits_{n \to \infty} \Vert \nabla T_k(u_n) \Vert_{L^p(\Omega \times Q_T)^d}^p = \Vert \nabla T_k(u)\Vert_{L^p(\Omega \times Q_T)^d}^p.
\end{align*}
Since $L^p(\Omega \times Q_T)^d$ is uniformly convex and $\nabla T_k(u_n) \rightharpoonup \nabla T_k(u)$ in $L^p(\Omega \times Q_T)^d$ it yields
\begin{align*}
\nabla T_k(u_n) \to \nabla T_k(u) ~~~\textnormal{in} ~ L^p(\Omega \times Q_T)^d
\end{align*} 
which ends the proof of Lemma \ref{Lemma 6.3}. 

\end{proof}

\begin{lemma}\label{190206_lem01}
Let $(u_0^n)_n \subset L^2(\Omega \times D)$ be a sequence such that $u_0^n \to u_0$ in $L^1(\Omega \times D)$ and let $u_n$ be the unique strong solution to \eqref{1} with initial value $u_0^n$, i.e., \eqref{20} holds true. Then,
\begin{align}\label{190206_01}
\limsup_{k\rightarrow\infty}\limsup_{n\rightarrow\infty}\,\mathbb{E}\int_{\{k<|u_n|<k+1\}} |\nabla u_n|^p \,dx\,dt=0.
\end{align}
\end{lemma}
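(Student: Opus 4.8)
The plan is to test the equation for $u_n$ with the renormalization function $S=\tilde{\theta}_k^1$, where $\theta_k^1(r)=T_{k+1}(r)-T_k(r)$ and $\tilde{\theta}_k^1(s)=\int_0^s\theta_k^1(r)\,dr$; this is exactly the device already used at the end of the proof of Lemma \ref{Lemma 6.3}, now with shell width $1$ in place of $1/\delta$. Since $S'=\theta_k^1$ is bounded with $S'(0)=0$ and $S''=\chi_{\{k<|\cdot|<k+1\}}$ is bounded and piecewise continuous, we have $S\in W^{2,\infty}(\mathbb{R})$ and Proposition \ref{Proposition 4.2} applies to the strong solution $u_n$ of \eqref{20} with $\psi\equiv 1$, $G=|\nabla u_n|^{p-2}\nabla u_n$, $f\equiv 0$ and $g=\Phi$. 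Because $\psi\equiv 1$, the gradient term $\nabla[S'(u_n)\psi]$ reduces to $S''(u_n)\nabla u_n=\chi_{\{k<|u_n|<k+1\}}\nabla u_n$, so that $G\cdot\nabla[S'(u_n)]=\chi_{\{k<|u_n|<k+1\}}|\nabla u_n|^p$.

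Evaluating the resulting identity at $t=T$, taking expectations (the It\^{o} integral has zero mean, since $S'(u_n)\Phi\in L^2(\Omega\times Q_T)$) and discarding the nonnegative term $\mathbb{E}\int_D\tilde{\theta}_k^1(u_n(T))\,dx\ge 0$, I obtain the key inequality
\[
\mathbb{E}\int_0^T\!\!\int_D\chi_{\{k<|u_n|<k+1\}}|\nabla u_n|^p\,dx\,dt\le \mathbb{E}\int_D\tilde{\theta}_k^1(u_0^n)\,dx+\frac12\,\mathbb{E}\int_0^T\!\!\int_D\chi_{\{k<|u_n|<k+1\}}\Phi^2\,dx\,dt.
\]
It then remains to send $n\to\infty$ and afterwards $k\to\infty$ on the right-hand side, working along the subsequence furnished by Lemma \ref{Lemma 6.3} for which $u_n\to u$ a.e.\ in $\Omega\times Q_T$.

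For the initial-data term I use that $\tilde{\theta}_k^1$ is $1$-Lipschitz, whence $|\mathbb{E}\int_D(\tilde{\theta}_k^1(u_0^n)-\tilde{\theta}_k^1(u_0))\,dx|\le\|u_0^n-u_0\|_{L^1(\Omega\times D)}\to 0$, so $\limsup_n\mathbb{E}\int_D\tilde{\theta}_k^1(u_0^n)\,dx=\mathbb{E}\int_D\tilde{\theta}_k^1(u_0)\,dx$. For the noise term, the a.e.\ convergence $u_n\to u$ gives $\limsup_n\chi_{\{k<|u_n|<k+1\}}\le\chi_{\{k\le|u|\le k+1\}}$ pointwise, and since $\chi_{\{k<|u_n|<k+1\}}\Phi^2\le\Phi^2\in L^1(\Omega\times Q_T)$, the reverse Fatou lemma yields $\limsup_n\mathbb{E}\int_0^T\int_D\chi_{\{k<|u_n|<k+1\}}\Phi^2\,dx\,dt\le\mathbb{E}\int_0^T\int_D\chi_{\{k\le|u|\le k+1\}}\Phi^2\,dx\,dt$. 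Finally, letting $k\to\infty$, dominated convergence applies to both limiting terms: $\tilde{\theta}_k^1(u_0)\to 0$ a.e.\ with $0\le\tilde{\theta}_k^1(u_0)\le|u_0|\in L^1(\Omega\times D)$, and $\chi_{\{k\le|u|\le k+1\}}\to 0$ a.e.\ (as $u$ is a.e.\ finite) with domination by $\Phi^2\in L^1(\Omega\times Q_T)$. Since the left-hand side is nonnegative, this gives \eqref{190206_01}.

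I expect the only delicate point to be that $\chi_{\{k<|u_n|<k+1\}}$ need not converge on the level sets $\{|u|=k\}$ and $\{|u|=k+1\}$, which may carry positive measure; this is precisely why I relax to the closed shell $\{k\le|u|\le k+1\}$ via reverse Fatou instead of claiming pointwise convergence of the indicators. Everything else is a routine consequence of the a.e.\ convergence from Lemma \ref{Lemma 6.3} and the $L^1$-convergence of the initial data.
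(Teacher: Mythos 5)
Your core identity and the overall strategy (renormalize with the shell profile $\theta_k^1=T_{k+1}-T_k$, take expectations, kill the gradient term on the shell by the initial-data and noise terms) is exactly the paper's. However, there is one genuine gap at the very first step: the claim that $S=\tilde{\theta}_k^1\in W^{2,\infty}(\mathbb{R})$ because $S'$ and $S''$ are bounded is false. Indeed $\tilde{\theta}_k^1(r)=|r|-k-\tfrac12$ for $|r|\geq k+1$, so $S$ itself is unbounded, and $W^{2,\infty}(\mathbb{R})$ requires $S\in L^{\infty}(\mathbb{R})$; hence Proposition \ref{Proposition 4.2}, as stated, does not license your application. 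This is precisely what the paper's proof of this lemma is built to avoid: it multiplies the shell profile by the cutoff $h_l$, so that $S'=h_l\cdot\theta_k^1$ has compact support and the resulting $S$ is genuinely in $W^{2,\infty}(\mathbb{R})$, applies the It\^{o} formula, and only then passes to the limit $l\to\infty$; the extra terms produced by $h_l'$ live on $\{l<|u_n|<l+1\}$ and vanish by dominated convergence because $|\nabla u_n|^p$ and $\Phi^2$ belong to $L^1(\Omega\times Q_T)$ for the strong solution $u_n$. To repair your proof you must either insert this cutoff-and-limit step, or first prove an extension of Proposition \ref{Proposition 4.2} to Lipschitz $S$ with bounded, piecewise continuous $S''$ (its proof does go through, since only $S'$, $S''$ enter the estimates when $\psi\equiv 1$, but that is an additional argument you cannot simply skip).

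Apart from this, your argument is correct and treats the right-hand side differently from the paper, in a way worth comparing. Discarding the nonnegative terminal term $\mathbb{E}\int_D\tilde{\theta}_k^1(u_n(T))\,dx$ and keeping only the initial-data term is a harmless simplification of the paper's passage to the limit in $J_1$, and your $1$-Lipschitz estimate for $\tilde{\theta}_k^1$ is exactly what is needed. For the noise term the two routes genuinely diverge: the paper never uses a.e.\ convergence of $u_n$; it splits $\Phi^2$ at a level $\sigma$, bounds the measure of the shell by $\frac{1}{k}\Vert u_n\Vert_{L^1(\Omega\times Q_T)}$ (Chebyshev) using only the uniform $L^1$-bound on $(u_n)$, and sends $k\to\infty$ and then $\sigma\to\infty$. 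Your reverse-Fatou argument, with the correct relaxation to the closed shell $\{k\leq|u|\leq k+1\}$ to dodge the level-set issue, is a valid and arguably more transparent alternative, but it needs a.e.\ convergence of $u_n$, which is only available along a subsequence; since the lemma asserts a $\limsup$ over the full sequence, you should add the standard sub-subsequence argument (any subsequence realizing the $\limsup$ admits a further a.e.-convergent subsequence along which your subsequence-independent bound holds). With those two repairs the proof is complete.
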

\begin{proof}
For fixed $l>0$, let $h_l:\mathbb{R}\rightarrow\mathbb{R}$ be defined by
\begin{align*}
h_l(r)=\begin{cases} 0 &,~|r| \geq l+1, \\ 
l+1-|r| &,~ l<|r|<l+1\\
1 &,~ |r|\leq l.
\end{cases}
\end{align*}
We plug $S(r)=\int_0^{r} h_l(\overline{r})(T_{k+1}(\overline{r})-T_k(\overline{r}))\,d\overline{r}$ and $\Psi\equiv 1$ in \eqref{190206_03} and take expectation to obtain
\begin{align}\label{190206_04}
I_1+I_2+I_3=I_4+I_5,
\end{align}
where
\begin{align*}
I_1&=\mathbb{E}\int_D \int_{u_0^n}^{u_n(t)}h_l(r)(T_{k+1}(r)-T_k(r))\, dr\,dx,\\
I_2&=\mathbb{E}\int_{\{l<|u_n|<l+1\}}-\operatorname{sign}(u_n)(T_{k+1}(u_n)-T_k(u_n))|\nabla u_n|^p\,dx\,ds,\\
I_3&=\mathbb{E}\int_{\{k<|u_n|<k+1\}}h_l(u_n)|\nabla u_n|^p\,dx\,ds,\\
I_4&=\frac{1}{2}\mathbb{E}\int_{\{l<|u_n|<l+1\}} -\operatorname{sign}(u_n)(T_{k+1}(u_n)-T_k(u_n))\Phi^2\,dx\,ds,\\
I_5&=\frac{1}{2}\mathbb{E}\int_{\{k<|u_n|<k+1\}}h_l(u_n)\Phi^2\,dx\,ds
\end{align*}
for all $t\in [0,T]$.
We can pass to the limit with $l\rightarrow\infty$ in \eqref{190206_04} by Lebesgue's Dominated Convergence theorem. We obtain
\begin{align}\label{190206_05}
J_1+J_2=J_3
\end{align}
where
\begin{align*}
J_1&=\mathbb{E}\int_D\int_{u_0^n}^{u_n(t)}T_{k+1}(r)-T_k(r)\, dr\,dx,\\
J_2&=\mathbb{E}\int_{\{k<|u_n|<k+1\}}|\nabla u_n|^p\,dx\,ds,\\
J_3&=\frac{1}{2} \mathbb{E}\int_{\{k<|u_n|<k+1\}}\Phi^2\,dx\,ds.
\end{align*}
Since $u_n\rightarrow u$ in $L^1(\Omega;C([0,T];L^1(D)))$ and $u_0^n\rightarrow u_0$ in $L^1(\Omega\times D)$, for $n\rightarrow \infty$, it follows that
\begin{align}\label{190206_08}
\lim_{k\rightarrow\infty}\lim_{n\rightarrow\infty}J_1=\lim_{k\rightarrow\infty} \int_D \int_{u_0}^{u(t)}T_{k+1}(r)-T_k(r)\, dr\,dx=0.
\end{align}
Now, the term $J_3$ desires our attention. For any $\sigma>0$ we have
\begin{align}\label{190206_06}
J_3&=\frac{1}{2}\,\mathbb{E}\int_0^t\int_{\{k<|u_n|<k+1\}}\left(\chi_{\{\Phi^2>\sigma\}}+\chi_{\{\Phi^2\leq\sigma\}}\right)\Phi^2\,dx\,ds\nonumber\\
&\leq\frac{1}{2}\,\mathbb{E}\int_0^t\int_{\{k<|u_n|<k+1\}}\sigma \,dx\,ds+\frac{1}{2}\,\mathbb{E}\int_0^t\int_{\{\Phi^2>\sigma\}}\Phi^2\,dx\,ds\nonumber\\
&\leq \frac{\sigma}{2k}\Vert u_n\Vert_{L^1(\Omega\times Q_T)} +\mathbb{E}\int_0^t\int_{\{\Phi^2>\sigma\}}\Phi^2\,dx\,ds
\end{align}
Thanks to the convergence of $(u_n)$, there exists a constant $C\geq 0$ not depending on the parameters $k$, $n$ and $\sigma$ such that 
\[\Vert u_n\Vert_{L^1(\Omega\times Q_T)}\leq C.\]
Thus, 
\begin{align}\label{190206_07}
\limsup_{k\rightarrow\infty}\limsup_{n\rightarrow\infty} J_3\leq \mathbb{E}\int_0^t\int_{\{\Phi^2>\sigma\}}\Phi^2\,dx\,ds
\end{align}
and therefore, passing to the limit with $\sigma\rightarrow\infty$, from \eqref{190206_07} and the nonnegativity of $J_3$ it follows that
\begin{align}\label{190206_09}
\limsup_{k\rightarrow\infty}\limsup_{n\rightarrow\infty} J_3=0.
\end{align}
Combining \eqref{190206_05}, \eqref{190206_08} and \eqref{190206_09}, and using the nonnegativity of $J_2$, we arrive at \eqref{190206_01}.
\end{proof}
We have 
\[\chi_{\{k<|u_n|<k+1\}}\chi_{\{|u|\neq k\}}\chi_{\{|u|\neq k+1\}}\rightarrow \chi_{\{k<|u|<k+1\}}\chi_{\{|u|\neq k\}}\chi_{\{|u|\neq k+1\}}\]
for $n\rightarrow\infty$ in $L^r(\Omega\times Q_T)$ for any $1\leq r<\infty$ and a.e. in $\Omega\times Q_T$. From Lemma \ref{Lemma 6.3} we recall that for any $k>0$,
\begin{align*}
\nabla T_k(u_n) \to \nabla T_k(u) ~~~\textnormal{in} ~ L^p(\Omega \times Q_T)^d
\end{align*}
for $n\rightarrow\infty$, thus, passing to a not relabeled subsequence if necessary, also a.s. in $\Omega\times Q_T$. Since $\nabla T_k(u)=0$ a.s. on $\{|u|=m\}$ for any $m\geq 0$, from Fatou's Lemma it follows that 
\begin{align}\label{190206_02}
&\liminf_{n\rightarrow\infty}\, \mathbb{E}\int_{\{k<|u_n|<k+1\}} |\nabla u_n|^p \,dx\,dt\nonumber\\
&\geq \liminf_{n\rightarrow\infty} \mathbb{E}\,\int_{\{k<|u_n|<k+1\}} |\nabla u_n|^p\chi_{\{|u|\neq k\}}\chi_{\{|u|\neq k+1\}} \,dx\,dt\nonumber\\
&\geq\mathbb{E}\int_{\{k<|u|<k+1\}} |\nabla u|^p \chi_{\{|u|\neq k\}}\chi_{\{|u|\neq k+1\}} \,dx\,dt\nonumber\\
&=\mathbb{E}\int_{\{k<|u|<k+1\}} |\nabla u|^p \,dx\,dt
\end{align}
and the energy dissipation condition $(iii)$ follows combining \eqref{190206_01} with \eqref{190206_02}.
\section{Uniqueness of renormalized solutions}
\begin{theorem}\label{Theorem 7.1}
Let $u,v$ be renormalized solutions to \eqref{1} with initial data $u_0 \in L^1(\Omega \times D)$ and $v_0 \in L^1(\Omega \times D)$, respectively. Then we get
\begin{align}\label{34}
\int_D |u(t) - v(t)| \,dx \leq \int_D |u_0 - v_0| \,dx
\end{align}
a.s. in $\Omega$, for all $t \in [0,T]$.
\end{theorem}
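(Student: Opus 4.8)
The plan is to compare the two renormalized solutions directly through the renormalized It\^{o} formula of Proposition \ref{Proposition 4.2} (in the form recorded in Remark \ref{Remark 5.2}), applied to a common family of renormalizations $S_l=\int_0^{\cdot}h_l$, where $h_l$ is the cutoff introduced in Section 6, so that $S_l'=h_l\geq 0$, $S_l(r)=r$ for $|r|\leq l$, and $S_l(u)\to u$, $S_l(v)\to v$ as $l\to\infty$. By Remark \ref{Remark 5.2} each of $S_l(u)$, $S_l(v)$ satisfies an equation of the form \eqref{2}; subtracting them, the difference $w_l:=S_l(u)-S_l(v)$ satisfies \eqref{2} with $\tilde G=S_l'(u)|\nabla u|^{p-2}\nabla u-S_l'(v)|\nabla v|^{p-2}\nabla v\in L^{p'}$, $\tilde f=S_l''(u)[|\nabla u|^p-\tfrac12\Phi^2]-S_l''(v)[|\nabla v|^p-\tfrac12\Phi^2]\in L^1$ and $\tilde g=\Phi(S_l'(u)-S_l'(v))\in L^2$. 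I would then apply Proposition \ref{Proposition 4.2} once more, now with $u$ replaced by $w_l$, with $\psi\equiv 1$ and with $S=\Theta_\epsilon$ an even, convex $C^2$-approximation of $r\mapsto|r|$ satisfying $\Theta_\epsilon''\geq 0$, $\Theta_\epsilon'(0)=0$, and $\Theta_\epsilon''$ concentrating at $0$. This yields an identity for $\int_D\Theta_\epsilon(w_l(t))\,dx$ built from a boundary term, the diffusion term $\int_0^t\int_D\Theta_\epsilon''(w_l)\,\tilde G\cdot\nabla w_l$, the zeroth order term $\int_0^t\int_D\Theta_\epsilon'(w_l)\tilde f$, a martingale, and the new It\^{o} correction $\tfrac12\int_0^t\int_D\Theta_\epsilon''(w_l)\,\tilde g^2$.

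The two limits $\epsilon\to 0$ and $l\to\infty$ are taken successively. As $\epsilon\to0$, $\Theta_\epsilon(w_l)\to|w_l|$ and $\Theta_\epsilon'(w_l)\to\operatorname{sign}(w_l)$, so the boundary term tends to $\int_D|S_l(u(t))-S_l(v(t))|\,dx-\int_D|S_l(u_0)-S_l(v_0)|\,dx$. The It\^{o} correction is the genuinely new difficulty: because the renormalization is nonlinear, the additive and identical noise of $u$ and $v$ no longer cancels as it did in Theorem \ref{Theorem 3.1}. I would control it by the pointwise inequality $(S_l'(u)-S_l'(v))^2\leq 2\,|S_l(u)-S_l(v)|=2|w_l|$, which follows from $S_l'=h_l$, $|h_l'|\leq 1$ and the monotonicity of $S_l$. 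Since $\Theta_\epsilon''(w_l)$ is supported in $\{|w_l|\leq\epsilon\}$ and bounded there by $C/\epsilon$, the correction integrand is dominated by $C\Phi^2$ and vanishes pointwise on $\{w_l=0\}$ (where $h_l(u)=h_l(v)$ a.e.), so by dominated convergence it tends to $0$ for each fixed $l$.

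For the diffusion term I would split the domain into the core $\{|u|\leq l\}\cap\{|v|\leq l\}$ and its complement. On the core, $h_l(u)=h_l(v)=1$ and $w_l=u-v$, so the integrand reduces by the chain rule to the monotone quantity $(|\nabla u|^{p-2}\nabla u-|\nabla v|^{p-2}\nabla v)\cdot\nabla(u-v)\geq 0$; this contribution is nonnegative and may be kept on the left-hand side. On the complement, the factors $h_l(u)$, $h_l(v)$ confine the integrand to $\{l<|u|<l+1\}\cup\{l<|v|<l+1\}$, so after $\epsilon\to0$ it is controlled by the energy-dissipation quantities $\mathbb{E}\int_{\{l<|u|<l+1\}}|\nabla u|^p$ and $\mathbb{E}\int_{\{l<|v|<l+1\}}|\nabla v|^p$. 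Likewise, $\tilde f$ is supported in these transition sets, so the zeroth order term is bounded by the same energy-dissipation quantities together with the integrals of $\Phi^2$ over them.

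Finally, letting $l\to\infty$, the left-hand side converges to $\int_D|u(t)-v(t)|\,dx$ and the initial term to $\int_D|u_0-v_0|\,dx$; the transition-set contributions vanish by condition $(iii)$ of Definition \ref{Definition 5.1} and dominated convergence for the $\Phi^2$ parts; and the martingale $\int_0^t\int_D\operatorname{sign}(w_l)\Phi(h_l(u)-h_l(v))\,dx\,d\beta$ vanishes because $h_l(u)-h_l(v)\to0$ in $L^2(\Omega\times Q_T)$, hence, by the It\^{o} isometry and passage to a subsequence, a.s. in $\Omega$. This delivers the asserted a.s. $L^1$-contraction \eqref{34}. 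The main obstacle is the simultaneous treatment of the diffusion term and of the It\^{o} correction: both arise precisely because, in contrast to the strong-solution comparison of Theorem \ref{Theorem 3.1}, the nonlinear renormalization prevents the exact cancellation of the stochastic integral, and both must be absorbed using the energy-dissipation condition and the pointwise bound on $S_l'(u)-S_l'(v)$ as $\epsilon\to0$ and then $l\to\infty$.
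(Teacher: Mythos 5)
Your overall architecture (renormalize both solutions, subtract, apply the It\^{o} formula of Proposition \ref{Proposition 4.2} with a convex approximation of the absolute value, then invoke the energy-dissipation condition) parallels the paper's proof, and several of your steps are sound: the inequality $(h_l(u)-h_l(v))^2\le 2|S_l(u)-S_l(v)|$ is correct and disposes of the It\^{o} correction neatly, and your treatment of the zeroth-order term and of the martingale works because there the concentrating object is paired with the \emph{bounded} factor $\Theta_\epsilon'$, $|\Theta_\epsilon'|\le 1$. The fatal step is the diffusion term off the core. There the integrand is $\Theta_\epsilon''(w_l)\,\tilde G\cdot\nabla w_l$ with
$\tilde G\cdot\nabla w_l=\bigl(h_l(u)|\nabla u|^{p-2}\nabla u-h_l(v)|\nabla v|^{p-2}\nabla v\bigr)\cdot\bigl(h_l(u)\nabla u-h_l(v)\nabla v\bigr)$,
and this is \emph{not} of monotone form: since $h_l$ takes values strictly between $0$ and $1$ on the transition sets, $h_l(u)|\nabla u|^{p-2}\nabla u$ is not the $p$-Laplace flux of $S_l(u)$ (that would be $h_l(u)^{p-1}|\nabla u|^{p-2}\nabla u$; the two coincide only when the derivative of the renormalization is $\{0,1\}$-valued, or when $p=2$, in which case your argument would indeed go through). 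Off the core the integrand therefore has no sign; its negative part is bounded by $\chi_{\{l<|u|<l+1\}}|\nabla u|^p+\chi_{\{l<|v|<l+1\}}|\nabla v|^p$, but it is multiplied by $\Theta_\epsilon''(w_l)$, which is of size $1/\epsilon$ on $\{|w_l|<\epsilon\}$. Your claim that ``after $\epsilon\to 0$'' this is controlled by the energy-dissipation quantities is unjustified: $\epsilon^{-1}\int_{\{|w_l|<\epsilon\}}\chi_{\{l<|u|<l+1\}}|\nabla u|^p\,dx\,dt$ does not converge to $\int_{\{l<|u|<l+1\}}|\nabla u|^p$, nor to $0$; by the coarea heuristic it converges, when it converges at all, to a surface-type integral over the level set $\{w_l=0\}$, which can be strictly positive or infinite, and condition (iii) of Definition \ref{Definition 5.1} says nothing about such quantities. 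With your order of limits ($\epsilon\to 0$ at fixed $l$) the available bound degenerates to $C_l/\epsilon\to\infty$.

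This is exactly the difficulty the paper's proof is engineered to avoid, by two mechanisms absent from your argument. First, it renormalizes with $T_s^{\sigma}$ and sends the truncation-smoothing parameter $\sigma\to 0$ \emph{before} sharpening the absolute value (parameter $k$), so that the flux $(T_s^{\sigma})'(u)|\nabla u|^{p-2}\nabla u$ converges to $|\nabla T_s(u)|^{p-2}\nabla T_s(u)$ --- possible precisely because $(T_s^{\sigma})'\to\chi_{\{|\cdot|\le s\}}$ is $\{0,1\}$-valued; after this limit the \emph{entire} diffusion term is the monotone quantity
$\tfrac1k\int_{\{|T_s(u)-T_s(v)|<k\}}\bigl(|\nabla T_s(u)|^{p-2}\nabla T_s(u)-|\nabla T_s(v)|^{p-2}\nabla T_s(v)\bigr)\cdot\nabla\bigl(T_s(u)-T_s(v)\bigr)\ge 0$,
with no off-core remainder at all. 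Second, the concentrating factors $(T_s^{\sigma})''\sim 1/\sigma$ that do survive (the term $I_3^1$) are always multiplied by the bounded factor $\tfrac1k T_k(\cdot)$ and concentrate on level sets of $u$ alone (resp.\ $v$ alone), namely $\{s<|u|<s+\sigma\}$; such quantities are controlled by plugging a renormalization of $u$ into $u$'s own equation (the $J_1$--$J_4$ argument via Lemma 6 of \cite{DBFMHR}). In your scheme the concentration takes place on level sets of the difference $w_l=S_l(u)-S_l(v)$, which no renormalization of $u$ alone or of $v$ alone can reach, so no equation-based control is available. To repair the proof you would essentially have to abandon the fixed cutoff $h_l$ in favour of sharp truncations smoothed at a separate scale and reproduce the paper's order of limits.
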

\begin{proof}
This proof is inspired by the uniqueness proof in \cite{DBFMHR}. We know that $u$ satisfies the SPDE
\begin{align}\label{35}
&dS(u)-\operatorname{div}\,(S'(u)|\nabla u|^{p-2}\nabla u) \,dt+S''(u)|\nabla u|^p\,dt\nonumber\\
&=\Phi S'(u) \,d\beta +\frac{1}{2}S''(u)\Phi^2 \,dt
\end{align}
for all $S \in C^2(\mathbb{R})$ such that $\textnormal{supp}~ S'$ compact. Moreover, $v$ satisfies an analogous SPDE. Subtracting both equalities yields
\begin{align}\label{36}
&S(u(t)) - S(v(t)) = S(u_0) - S(v_0) + \int_0^t \textnormal{div}[S'(u) |\nabla u|^{p-2} \nabla u - S'(v)|\nabla v|^{p-2} \nabla v]\,ds  \notag \\
- &\int_0^t \left(S''(u)|\nabla u|^p  - S''(v)|\nabla v|^p\right)\,ds + \int_0^t \Phi (S'(u) - S'(v))  \,d\beta \\
+ &\frac{1}{2} \int_0^t \Phi^2 (S''(u) - S''(v)) \,ds \notag
\end{align}
in $W^{-1,p'}(D) + L^1(D)$ for all $t \in [0,T]$, a.s. in $\Omega$.\\
\newline
Now we set $S(r):= T_s^{\sigma}(r)$ for $r \in \mathbb{R}$ and $s,\sigma >0$ and define $T_s^{\sigma}$ as follows: Firstly, we define for all $r \in \mathbb{R}$
\begin{align*}
(T_s^{\sigma})'(r)= \begin{cases}
1 ,~&\textnormal{if}~|r| \leq s, \\
\frac{1}{\sigma}(s+\sigma - |r|), ~&\textnormal{if}~ s < |r| < s + \sigma, \\
0, ~&\textnormal{if}~ |r| \geq s+ \sigma.
\end{cases}
\end{align*}
Then we set $T_s^{\sigma}(r):= \int_0^r (T_s^{\sigma})'(\tau) \,d\tau$. Furthermore we have the weak derivative
\begin{align*}
(T_s^{\sigma})''(r)= \begin{cases}
-\frac{1}{\sigma} \textnormal{sgn}(r), ~&\textnormal{if}~ s < |r| < s + \sigma, \\
0, ~&\textnormal{otherwise}.
\end{cases}
\end{align*}
Applying Proposition \ref{Proposition 4.2} to equality \eqref{36} with $S(r)= \frac{1}{k} \tilde{T}_k(r)=\frac{1}{k}\int_0^r T_k(\overline{r})\,d\overline{r}$ (see also Remark \ref{Remark 5.2}) yields
\begin{align*}
&\int_D \bigg( \frac{1}{k} \tilde{T}_k( T_s^{\sigma}(u(t)) - T_s^{\sigma}(v(t))) - \frac{1}{k} \tilde{T}_k( T_s^{\sigma}(u_0) - T_s^{\sigma}(v_0)) \bigg) \psi \,dx \\
- &\int_0^t \langle \textnormal{div} ((T_s^{\sigma})'(u) |\nabla u|^{p-2} \nabla u - (T_s^{\sigma})'(v) |\nabla v|^{p-2} \nabla v), \frac{1}{k} T_k( T_s^{\sigma}(u) - T_s^{\sigma}(v))\psi \rangle \,dr \\
= &\int_D \int_0^t \bigg( - ((T_s^{\sigma})''(u) |\nabla u|^p - (T_s^{\sigma})''(v) |\nabla v|^p) + \frac{1}{2} \Phi^2 ((T_s^{\sigma})''(u) -(T_s^{\sigma})''(v)) \bigg) \cdot \\
\cdot &\frac{1}{k}T_k (T_s^{\sigma}(u) - T_s^{\sigma}(v)) \psi \,dr \,dx \\
+ &\int_D \int_0^t \Phi ((T_s^{\sigma})'(u) - (T_s^{\sigma})'(v)) \frac{1}{k} T_k (T_s^{\sigma}(u) - T_s^{\sigma}(v)) \psi \,d\beta \,dx \\
+ &\frac{1}{2} \int_D \int_0^t \Phi^2 (T_s^{\sigma}(u) - T_s^{\sigma}(v))^2 \frac{1}{k} \chi_{\{|T_s^{\sigma}(u) - T_s^{\sigma}(v)|<k\}} \psi \,dr\,dx
\end{align*}
for all $s,\sigma, k>0$ and all $\psi \in C^{\infty}([0,T] \times \overline{D})$. Using $\psi=1$ yields
\begin{align}\label{37}
&\int_D \bigg( \frac{1}{k} \tilde{T}_k( T_s^{\sigma}(u(t)) - T_s^{\sigma}(v(t))) - \frac{1}{k} \tilde{T}_k( T_s^{\sigma}(u_0) - T_s^{\sigma}(v_0)) \bigg) \,dx \notag \\
- &\int_0^t \langle \textnormal{div} ((T_s^{\sigma})'(u) |\nabla u|^{p-2} \nabla u - (T_s^{\sigma})'(v) |\nabla v|^{p-2} \nabla v), \frac{1}{k} T_k( T_s^{\sigma}(u) - T_s^{\sigma}(v)) \rangle \,dr \notag \\
= &\int_D \int_0^t \bigg( - ((T_s^{\sigma})''(u) |\nabla u|^p - (T_s^{\sigma})''(v) |\nabla v|^p) + \frac{1}{2} \Phi^2 ((T_s^{\sigma})''(u) -(T_s^{\sigma})''(v)) \bigg) \cdot \\
\cdot &\frac{1}{k}T_k (T_s^{\sigma}(u) - T_s^{\sigma}(v)) \,dr \,dx \notag \\
+ &\int_D \int_0^t \Phi ((T_s^{\sigma})'(u) - (T_s^{\sigma})'(v)) \frac{1}{k} T_k (T_s^{\sigma}(u) - T_s^{\sigma}(v)) \,d\beta\,dx \notag\\
+ &\frac{1}{2} \int_D \int_0^t \Phi^2 (T_s^{\sigma}(u) - T_s^{\sigma}(v))^2 \frac{1}{k}  \chi_{\{|T_s^{\sigma}(u) - T_s^{\sigma}(v)|<k\}} \,dr\,dx. \notag
\end{align}
a.s. in $\Omega$ for any $t\in [0,T]$. We write equality \eqref{37} as
\begin{align*}
I_1 + I_2= I_3+ I_4 + I_5.
\end{align*}
For $\omega\in \Omega$ and $t\in [0,T]$ fixed, we pass to the limit with $\sigma \to 0$ firstly, then we pass to the limit $k \to 0$ and finally we let $s \to \infty$. Before we do so, we have to give some remarks on $T_s^{\sigma}$.\\
By definition of $(T_s^{\sigma})'$ we see immediately that $(T_s^{\sigma})'(r) \to \chi_{\{|r|\leq s\}}$ pointwise for all $r \in \mathbb{R}$ as $\sigma \to 0$. Since $|(T_s^{\sigma})'| \leq 1$ on $\mathbb{R}$ we have,
\begin{align*}
(T_s^{\sigma})'(u) \to \chi_{\{|u|\leq s\}}
\end{align*}
in $L^1(Q_T)$ a.s. in $\Omega$ and a.e. in $\Omega \times Q_T$ 
as $\sigma \to 0$. An analogous result holds true for $v$ instead of $u$.\\
For $0<\sigma<1$ and fixed $s>0$, we have $\textnormal{supp}(T_s^{\sigma})' \subset [-s-1,s+1]$. Therefore $T_s^{\sigma}$ is bounded in $L^{\infty}(\mathbb{R})$ for fixed $s$ and we may conclude $ T_s^{\sigma}(u) \to T_s(u)$ a.e. in $\Omega \times Q_T$ and in $L^1(Q_T)$ a.s. in $\Omega$ as $\sigma \to 0$. Furthermore, we have $\nabla T_s^{\sigma}(u)= \nabla T_s^{\sigma}(T_{s+1}(u))= \nabla T_{s+1}(u) (T_s^{\sigma})'(u)$. Since $|(T_s^{\sigma})'| \leq 1$ on $\mathbb{R}$, we have
\begin{align*}
\nabla T_{s+1}(u) (T_s^{\sigma})'(u) \to \nabla T_{s+1}(u) \chi_{\{|u| \leq s\}} ~~~\textnormal{in}~L^p(Q_T)^d
\end{align*}
for $\sigma\rightarrow 0$ a.s. in $\Omega$. Since $\nabla T_{s+1}(u) \chi_{\{|u| \leq s\}}= \nabla T_s(u)$ we have
\begin{align}\label{38}
\nabla T_s^{\sigma}(u) \to \nabla T_s(u) ~~~\textnormal{in} ~L^p(Q_T)^d
\end{align}
for $\sigma\rightarrow 0$ a.s. in $\Omega$. \\
Let us consider $I_1$. By Lebesgue's Theorem it follows
\begin{align*}
\lim\limits_{\sigma \to 0} I_1= \int_D \bigg( \frac{1}{k} \tilde{T}_k( T_s(u(t)) - T_s(v(t))) - \frac{1}{k} \tilde{T}_k( T_s(u_0) - T_s(v_0)) \bigg) \,dx
\end{align*}
a.s. in $\Omega$, for all $t \in [0,T]$. Since $\frac{1}{k} \tilde{T}_k \to | \cdot |$ pointwise in $\mathbb{R}$ as $k \to 0$, we may conclude
\begin{align*}
\lim\limits_{k \to 0} \lim\limits_{\sigma \to 0} I_1 = \int_D |T_s(u(t)) - T_s(v(t))| - |T_s(u_0) - T_s(v_0)| \,dx
\end{align*}
a.s. in $\Omega$, for all $t \in [0,T]$. Finally, since $u(t),v(t),u_0,v_0 \in L^1(D)$ for all $t \in [0,T]$, a.s. in $\Omega$ again Lebesgue's Theorem yields
\begin{align}\label{39}
\lim\limits_{s \to \infty}\lim\limits_{k \to 0} \lim\limits_{\sigma \to 0} I_1 = \int_D |u(t) - v(t)| - |u_0 - v_0| \,dx
\end{align}
a.s. in $\Omega$, for all $t \in [0,T]$.\\
\newline
Next we want to show that
\begin{align*}
\liminf\limits_{\sigma \to 0} I_2 \geq 0
\end{align*}
a.s. in $\Omega$. \\
Since $\textnormal{supp}(T_s^{\sigma})' \subset [-s-1,s+1]$ for $0< \sigma < 1$ we can estimate
\begin{align*}
I_2 = &\int_D \int_0^t ((T_s^{\sigma})'(u) |\nabla u|^{p-2} \nabla u - (T_s^{\sigma})'(v) |\nabla v|^{p-2} \nabla v) \frac{1}{k} \nabla T_k( T_s^{\sigma}(u) - T_s^{\sigma}(v)) \,dr \,dx \\
= &\int_D \int_0^t \bigg((T_s^{\sigma})'(u) |\nabla T_{s+1}(u)|^{p-2} \nabla T_{s+1}(u) - (T_s^{\sigma})'(v) |\nabla T_{s+1}(v)|^{p-2} \nabla T_{s+1}(v) \bigg) \cdot \\
\cdot &\frac{1}{k} \nabla T_k( T_s^{\sigma}(u) - T_s^{\sigma}(v)) \,dr \,dx.
\end{align*}
Hence
\begin{align}\label{40}
\lim\limits_{\sigma \to 0} I_2 &= \frac{1}{k} \int_{\{|T_s(u) - T_s(v)|<k\}} \bigg(|\nabla T_{s}(u)|^{p-2} \nabla T_{s}(u) - |\nabla T_{s}(v)|^{p-2} \nabla T_{s}(v) \bigg) \cdot \\
&\cdot \nabla ( T_s(u) - T_s(v)) \,dr \,dx \geq 0 \notag
\end{align}
a.s. in $\Omega$, for all $t \in [0,T]$.\\
\newline
Considering $I_5$ we can easily estimate
\begin{align*}
0 \leq I_5 \leq \frac{1}{2} \int_D \int_0^t \Phi^2 \cdot k \,dr \, dx
\end{align*}
a.s. in $\Omega$. Thus we get
\begin{align}\label{41}
\limsup\limits_{k \to 0} \limsup\limits_{\sigma \to 0} I_5=0
\end{align}
a.s. in $\Omega$.\\
We write
\begin{align*}
I_3=I_3^1 + I_3^2,
\end{align*}
where
\begin{align*}
I_3^1 &= - \int_D \int_0^t ((T_s^{\sigma})''(u) |\nabla u|^p - (T_s^{\sigma})''(v) |\nabla v|^p) \frac{1}{k}T_k (T_s^{\sigma}(u) - T_s^{\sigma}(v)) \,dr \,dx,\\
I_3^2 &= \int_D \int_0^t \frac{1}{2} \Phi^2 ((T_s^{\sigma})''(u) -(T_s^{\sigma})''(v)) \frac{1}{k}T_k (T_s^{\sigma}(u) - T_s^{\sigma}(v)) \,dr \,dx.
\end{align*}
For $0< \sigma < 1$ we have
\begin{align*}
|I_3^1| \leq \frac{1}{\sigma} \int_{\{s < |u| < s+\sigma \}} |\nabla u|^p \,dr\,dx+ \frac{1}{\sigma}\int_{\{s < |v| < s+\sigma \}} |\nabla v|^p\,dr\,dx
\end{align*}
and we want show that there is a sequence $(s_j)_{j \in \mathbb{N}} \subset \mathbb{N}$ such that $s_j \to \infty$ as $j \to \infty$ and
\begin{align*}
\lim\limits_{j \to \infty} \limsup\limits_{\sigma \to 0}\frac{1}{\sigma} \int_{\{s_j < |u| < s_j+\sigma \}} |\nabla u|^p \,dr\,dx+ \frac{1}{\sigma}\int_{\{s_j < |v| < s_j+\sigma \}} |\nabla v|^p\,dr\,dx =0.
\end{align*}
According to Lemma 6 in \cite{DBFMHR} it is sufficient to show that for any $s \in \mathbb{N}$ there exists a nonnegative function $F \in L^1(\Omega\times (0,t) \times D)$ such that for fixed $\omega \in \Omega$
\begin{align*}
&\frac{1}{\sigma} \int_{\{s < |u| < s+\sigma \}} |\nabla u|^p \,dr\,dx+ \frac{1}{\sigma}\int_{\{s < |v| < s+\sigma \}} |\nabla v|^p\,dr\,dx \\
&\leq \frac{1}{\sigma}\left(\int_{\{s \leq |u| \leq s+\sigma \}} F \,dr \,dx +\int_{\{s \leq |v| \leq s+\sigma \}} F \,dr \,dx\right) +\epsilon(\sigma,s,\omega),
\end{align*}
where $\limsup\limits_{\sigma \to 0} \epsilon(\sigma,s,\omega) \to 0$ as $s \to \infty$ to conclude that 
\begin{align*}
\lim\limits_{j \to \infty} \limsup\limits_{\sigma \to 0} |I_3^1|=0,
\end{align*}
where $s$ is exchanged by $s_j$ in $I_3^1$.\\
For symmetry reasons, we only have to show the existence of a nonnegative function $F \in L^1(\Omega \times (0,t) \times D)$ such that for fixed $\omega \in \Omega$
\begin{align*}
\frac{1}{\sigma} \int_{\{s < |u| < s+\sigma \}} |\nabla u|^p \,dr\,dx \leq \frac{1}{\sigma} \int_{\{s \leq |u| \leq s+\sigma \}} F \,dr \,dx + \epsilon(\sigma,s,\omega).
\end{align*}
To this end, we plug $S(u(t))=\frac{1}{\sigma} \int_0^{u(t)} h_l(\tau)(T_{s+ \sigma}(\tau)-T_s(\tau))\,d\tau$ and $\Psi\equiv 1$ in the renormalized formulation for $u$ to obtain
\begin{align*}
L_1+L_2+L_3=L_4+L_5+L_6
\end{align*}
a.s. in $\Omega$, where
\begin{align*}
L_1 &:= \frac{1}{\sigma} \int_D \int_{u_0}^{u(t)} h_l(\tau)(T_{s+ \sigma}(\tau) - T_s(\tau))  \, d\tau\, dx , \\
L_2 &:= \frac{1}{\sigma} \int_D \int_0^t - \operatorname{sign}(u) \chi_{\{l < |u| < l+1\}} (T_{s+ \sigma}(u) - T_s(u)) |\nabla u|^p \, dr \, dx, \\
L_3 &:= \frac{1}{\sigma} \int_D \int_0^t h_l(u) \chi_{\{s < |u| < s + \sigma\}} |\nabla u|^p \, dr\, dx,  \\
L_4 &:= \frac{1}{\sigma}\int_0^t\int_D  h_l(u) (T_{s+ \sigma}(u) - T_s(u)) \Phi \, dx \, d\beta, \\
L_5 &:= \frac{1}{2\sigma} \int_D \int_0^t - \operatorname{sign}(u) \chi_{\{l < |u| < l+1\}} (T_{s+ \sigma}(u) - T_s(u)) \Phi^2  \, dr\, dx, \\
L_6 &:= \frac{1}{2\sigma} \int_D \int_0^t h_l(u) \chi_{\{s < |u| < s + \sigma\}} \Phi^2  \, dr\, dx.
\end{align*}
It is straightforward to pass to the limit with $l \to \infty$ for a.e. $\omega\in\Omega$ in $L_1$, $L_3$, $L_4$, $L_5$ and $L_6$. We have
\begin{align*}
|L_2|\leq \frac{\sigma}{\sigma}\int_{\{l<|u|<l+1\}}|\nabla u|^p\,dr\,dx
\end{align*}
a.s. in $\Omega$. In order to pass to the limit with $l\rightarrow\infty$ in $L_2$, we recall that from the energy dissipation condition $(iii)$ it follows that, passing to a not relabeled subsequence if necessary, 
\[\lim_{l\rightarrow\infty}\int_{\{l<|u|<l+1\}}|\nabla u|^p\,dr\,dx=0\]
a.s. in $\Omega$ and therefore $\lim\limits_{l \to \infty} L_2=0$ a.s. in $\Omega$. After this passage to the limit the remaining terms are
\begin{align*}
J_1 + J_2 = J_3 + J_4
\end{align*}
a.s. in $\Omega$, where
\begin{align*}
J_1 &:= \frac{1}{\sigma} \int_D \int_{u_0}^{u(t)} (T_{s+ \sigma}(\tau) - T_s(\tau)) \, d\tau\, dx, \\
J_2 &:= \frac{1}{\sigma} \int_{\{s < |u| < s + \sigma\}} |\nabla u|^p \, dr\, dx,  \\
J_3 &:= \frac{1}{\sigma} \int_0^t \int_D (T_{s+ \sigma}(u) - T_s(u)) \Phi \, dx \, d\beta,  \\
J_4 &:= \frac{1}{\sigma} \int_{\{s < |u| < s + \sigma\}} \frac{1}{2} \Phi^2 \, dr \, dx .
\end{align*}
We set $F=: \frac{1}{2} \Phi^2$.\\
It is left to show that $\lim\limits_{s \to \infty}\limsup\limits_{\sigma\to 0} \epsilon(\sigma, s,\omega)=0$, where
\begin{align*}
\epsilon(\sigma,s,\omega)= \frac{1}{\sigma} \int_D \int^{u(t)}_{u_0} (T_{s+ \sigma}(\tau) - T_s(\tau))  \, d\tau \, dx  + \frac{1}{\sigma} \int_0^t \int_D (T_{s+ \sigma}(u) - T_s(u)) \Phi \, dx \, d\beta.
\end{align*}
Since, $|\frac{1}{\sigma} (T_{s+ \sigma}(\tau) - T_s(\tau))| \leq 1$ for $0<\sigma\leq 1$, it follows that
\begin{align*}
\left|\int_{u_0}^{u(t)}\frac{1}{\sigma}(T_{s+ \sigma}(\tau) - T_s(\tau))\,d\tau\right|\leq |u(t)-u_0|\in L^1(\Omega\times D).
\end{align*}
Moreover,
\begin{align*}
\frac{1}{\sigma} (T_{s+ \sigma}(\tau) - T_s(\tau)) \to \operatorname{sign}(\tau) \chi_{\{|\tau| \geq s \}}
\end{align*}
a.e. in $\{|\tau| \neq s\}$ and from Lebesgue's dominated convergence theorem it follows that
\begin{align*}
&\lim\limits_{\sigma \to 0}\int_D \int^{u(t)}_{u_0} \frac{1}{\sigma} (T_{s+ \sigma}(\tau) - T_s(\tau)) \, d\tau \, dx\\
&=\lim\limits_{\sigma \to 0}\int_D \int^{u(t)}_{u_0} \frac{1}{\sigma} (T_{s+ \sigma}(\tau) - T_s(\tau))\chi_{\{|\tau| \neq s\}} \, d\tau \, dx  \\
&=\int_D \int^{u(t)}_{u_0} \operatorname{sign}(\tau) \chi_{\{|\tau| \geq s \}}\chi_{\{|\tau| \neq s\}} \, d\tau \, dx \to 0
\end{align*}
as $s \to \infty$ a.s. in $\Omega$. Moreover, since $\frac{1}{\sigma^2}(T_{s+ \sigma}(u) - T_s(u))^2 \Phi^2 \to \chi_{\{|u| \geq s \}} \Phi^2$ a.e. in $\{ |u| \neq s \}$ and $\frac{1}{\sigma^2}(T_{s+ \sigma}(u) - T_s(u))^2 \Phi^2 \leq \Phi^2\in L^2(\Omega\times Q_T)$ we obtain
\begin{align*}
&\lim\limits_{\sigma \to 0} \mathbb{E} \,\bigg | \int_D \int_0^t \frac{1}{\sigma} (T_{s+ \sigma}(u) - T_s(u)) \Phi \, dx \, d\beta  \bigg |^2 \\
&=\lim\limits_{\sigma \to 0} \mathbb{E} \,\bigg |\int_0^t  \int_D\frac{1}{\sigma} (T_{s+ \sigma}(u) - T_s(u)) \Phi\chi_{\{|u| \neq s\}} \, dx \, d\beta  \bigg |^2 \\
&=\lim\limits_{\sigma \to 0} \mathbb{E} \,\int_0^t \int_D \frac{1}{\sigma^2} (T_{s+ \sigma}(u) - T_s(u))^2 \Phi^2\chi_{\{|u| \neq s\}} \, dx \, dr \\
&=\mathbb{E}\,\int_0^t \int_D \chi_{\{|u| \geq s \}} \Phi^2\chi_{\{|u| \neq s\}}  \, dx \, dr \to 0
\end{align*}
as $s \to \infty$. Passing to suitable subsequences in $\sigma$ and $s$ we may conclude
\begin{align*}
\lim\limits_{s \to \infty}  \lim\limits_{\sigma \to 0} \int_0^t \int_D \frac{1}{\sigma} (T_{s+ \sigma}(u) - T_s(u)) \Phi \, dx \, d\beta=0
\end{align*}
a.s. in $\Omega$.
It follows that
\begin{align*}
\lim\limits_{s \to \infty} \lim\limits_{\sigma \to 0} \epsilon(\sigma,s,\omega) =0.
\end{align*}
Therefore we get
\begin{align}\label{42}
\lim\limits_{j \to \infty} \limsup\limits_{\sigma \to 0} |I_3^1|=0.
\end{align}
Now let us consider the integrand of $I_3^2$ pointwise in $Q_t$ for a fixed $w \in \Omega$. We have $(T_s^{\sigma})''(u) \to 0$ a.e. in $Q_T$ as $\sigma \to 0$. Hence the whole integrand of $I_3^2$ tends to $0$ a.e. in $Q_t$ as $\sigma \to 0$. W.l.o.g. assume that $u \geq v$ at some point in $Q_t$. Then we have $\frac{1}{k}T_k (T_s^{\sigma}(u) - T_s^{\sigma}(v)) \geq 0$. Furthermore, we have $(T_s^{\sigma})''(u) -(T_s^{\sigma})''(v)>0$ if and only if $s<v<s+\sigma <u$ or $v<-s-\sigma<u<-s$. In both cases we can estimate easily that $ T_s^{\sigma}(u) - T_s^{\sigma}(v) \leq \frac{\sigma}{2}$. Since $|(T_s^{\sigma})''(u) -(T_s^{\sigma})''(v)| \leq \frac{2}{\sigma}$ the results above yield
\begin{align*}
\frac{1}{2} \Phi^2 ((T_s^{\sigma})''(u) -(T_s^{\sigma})''(v)) \frac{1}{k}T_k (T_s^{\sigma}(u) - T_s^{\sigma}(v)) \leq \frac{1}{2k} \Phi^2
\end{align*}
a.e. in $Q_t$ and $\frac{1}{2k} \Phi^2 \in L^1(Q_t)$. Therefore Fatou's Lemma yields
\begin{align}\label{43}
\limsup\limits_{\sigma \to 0} I_3^2 \leq 0
\end{align}
a.s. in $\Omega$.\\
Now we consider the term $I_4$. Applying the It\^{o} isometry yields
\begin{align*}
&\mathbb{E} \bigg| \int_D \int_0^t \Phi \cdot ((T_s^{\sigma})'(u) - (T_s^{\sigma})'(v)) \frac{1}{k} T_k (T_s^{\sigma}(u) - T_s^{\sigma}(v)) \,d\beta \,dx \bigg|^2 \\
= &\mathbb{E} \int_D \int_0^t |\Phi ((T_s^{\sigma})'(u) - (T_s^{\sigma})'(v)) \frac{1}{k} T_k (T_s^{\sigma}(u) - T_s^{\sigma}(v))|^2 \,dr \,dx
\end{align*}
for all $t \in [0,T]$. As $|(T_s^{\sigma})'| \leq 1 $ and $|\frac{1}{k}T_k| \leq 1$ we may conclude by applying the Lebesgue's Theorem
\begin{align*}
\lim\limits_{s \to \infty} \lim\limits_{k \to 0} \lim\limits_{\sigma \to 0} &\mathbb{E} \int_D \int_0^t |\Phi \cdot ((T_s^{\sigma})'(u) - (T_s^{\sigma})'(v)) \frac{1}{k} T_k (T_s^{\sigma}(u) - T_s^{\sigma}(v))|^2 \,dr \,dx \\
= \lim\limits_{s \to \infty} \lim\limits_{k \to 0} &\mathbb{E} \int_D \int_0^t |\Phi \cdot (\chi_{\{|u| \leq s\}} - \chi_{\{|v| \leq s\}}) \frac{1}{k} T_k (T_s(u) - T_s(v))|^2 \,dr \,dx \\
= \lim\limits_{s \to \infty} &\mathbb{E} \int_D \int_0^t |\Phi \cdot(\chi_{\{|u| \leq s\}} - \chi_{\{|v| \leq s\}}) \operatorname{sign}_0(T_s(u) - T_s(v))|^2 \,dr \,dx \\
= &0
\end{align*}
for all $t \in [0,T]$. Therefore we obtain
\begin{align*}
\lim\limits_{s \to \infty} \lim\limits_{k \to 0} \lim\limits_{\sigma \to 0} \int_D \int_0^t \Phi \cdot ((T_s^{\sigma})'(u) - (T_s^{\sigma})'(v)) \frac{1}{k} T_k (T_s^{\sigma}(u) - T_s^{\sigma}(v)) \,d\beta \,dx =0 ~~~\textnormal{in}~L^2(\Omega).
\end{align*}
Passing to suitable subsequences in $\sigma,k$ and $s$ it follows that 
\begin{align}\label{44}
\lim\limits_{s \to \infty} \lim\limits_{k \to 0} \lim\limits_{\sigma \to 0} \int_D \int_0^t \Phi \cdot ((T_s^{\sigma})'(u) - (T_s^{\sigma})'(v)) \frac{1}{k} T_k (T_s^{\sigma}(u) - T_s^{\sigma}(v)) \,d\beta \,dx =0
\end{align}
a.s. in $\Omega$, for all $t \in [0,T]$. \\
From \eqref{39} - \eqref{44} it follows
\begin{align*}
\int_D |u(t) - v(t)| \,dx \leq \int_D |u_0 - v_0| \,dx
\end{align*}
a.s. in $\Omega$, for all $t \in [0,T]$.
\end{proof}
\section{Markov property}
Note that it is possible to replace the starting time $0$ by a starting time $r \in [0,T]$. In this case, we consider the filtration starting at time $r$, i.e., $(\mathcal{F}_t)_{t \in [r,T]}$. Then, $\tilde{\beta}_t:= \beta_t - \beta_r$, $t \in [r,T]$, is a Brownian motion with respect to $(\mathcal{F}_t)_{t\in [r,T]}$ such that $\sigma(\tilde{\beta_t}, ~t\geq r)$ is independent of $\mathcal{F}_r$ (see, e.g., Remark 3.2. in \cite{PB}). Moreover, the augmentation $\tilde{\mathcal{F}_t}$ of $\sigma(\tilde{\beta_t}, ~t\geq r)$ is right-continuous and independent of $\mathcal{F}_r$. Furthermore, we have $d\beta_t= d\tilde{\beta}_t$ and all results and arguments still hold true in the case of a starting time $r \in [0,T]$ and $\mathcal{F}_r$-measurable initial conditions $u_r \in L^1(\Omega \times D)$.
In this section, we denote by $u(t,r,u_r)$ the unique renormalized solution of \eqref{1} starting in $u_r$ at time $r$ for $t,r \in [0,T]$ with $r \leq t$ and $u_r \in L^1(\Omega \times D)$ $\mathcal{F}_r$-measurable.
\begin{proposition}\label{Proposition 8.1}
For all $r,s,t \in [0,T]$ with $r \leq s \leq t$ and all $u_r \in L^1(\Omega \times D)$ $\mathcal{F}_r$-measurable we have
\begin{align*}
u(t,s, u(s,r,u_r))= u(t,r,u_r)
\end{align*}
a.s. in $\Omega$.
\end{proposition}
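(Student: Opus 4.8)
The plan is to deduce the flow identity from the uniqueness result in Theorem~\ref{Theorem 7.1}, together with the observation that the restriction of a renormalized solution to a later subinterval is again a renormalized solution starting from its own value at the left endpoint. Write $w(t) := u(t,r,u_r)$ for $t \in [r,T]$; this is the renormalized solution on $[r,T]$ with initial value $u_r$, so in particular $w(s) = u(s,r,u_r) \in L^1(\Omega \times D)$ is $\mathcal{F}_s$-measurable. By the very definition adopted in this section, $u(t,s,u(s,r,u_r))$ is the unique renormalized solution on $[s,T]$ with initial value $w(s)$ at time $s$, constructed with respect to the filtration $(\mathcal{F}_t)_{t \in [s,T]}$ and the shifted Brownian motion $\tilde{\beta}_t := \beta_t - \beta_s$. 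It therefore suffices to show that $w|_{[s,T]}$ is also a renormalized solution on $[s,T]$ with this same initial value, and then to invoke uniqueness.

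First I would verify the three conditions of Definition~\ref{Definition 5.1} for $w|_{[s,T]}$. Conditions $(i)$ and $(iii)$ are immediate: $T_k(w) \in L^p(\Omega; L^p(r,T;W^{1,p}_0(D)))$ restricts to $L^p(\Omega; L^p(s,T;W^{1,p}_0(D)))$, and since the integrand in the energy dissipation condition is nonnegative, the integral over $(s,T) \times D$ is dominated by the corresponding integral over $(r,T) \times D$, which vanishes as $k \to \infty$ because $w$ is a renormalized solution on $[r,T]$. The essential point is condition $(ii)$: I would write the renormalized identity~\eqref{reneq190204} for $w$ once at time $t$ and once at time $s$ and subtract the two. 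All deterministic time-integrals combine as $\int_{\cdot}^{t} - \int_{\cdot}^{s} = \int_s^t$, and for the stochastic integral the increment property of the It\^o integral gives the same splitting, after which one uses $d\beta = d\tilde{\beta}$, recorded at the beginning of this section, to rewrite the integral against $\tilde{\beta}$. The resulting identity is precisely the renormalized formulation on $[s,T]$ with initial datum $w(s)$, and $w$ is adapted to $(\mathcal{F}_t)_{t \in [s,T]}$ since it is $\mathcal{F}_t$-adapted.

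Once $w|_{[s,T]}$ and $u(\cdot,s,w(s))$ are both recognized as renormalized solutions on $[s,T]$ starting from $w(s)$ at time $s$, the uniqueness theorem applies: as noted in the introductory remark of this section, Theorem~\ref{Theorem 7.1} holds verbatim for the starting time $s$, so the $L^1$-contraction principle yields $\int_D |w(t) - u(t,s,w(s))| \, dx \leq \int_D |w(s) - w(s)| \, dx = 0$ for all $t \in [s,T]$, a.s. in $\Omega$. Hence $w(t) = u(t,s,w(s))$, that is, $u(t,r,u_r) = u(t,s,u(s,r,u_r))$ a.s. in $\Omega$, which is the assertion.

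The step I expect to be the main obstacle is the rigorous justification that the entire renormalized machinery transfers to the shifted setting: that $w|_{[s,T]}$ is adapted to the correct filtration, that $\tilde{\beta}$ is a Brownian motion with respect to $(\mathcal{F}_t)_{t \ge s}$, and that the stochastic integral may be rewritten against $\tilde{\beta}$ while preserving the renormalized structure used in Proposition~\ref{Proposition 4.2} and Theorem~\ref{Theorem 7.1}. This is conceptually exactly what the introductory remark of this section asserts, but it must be applied consistently, using the independence of $\tilde{\mathcal{F}}_t$ from $\mathcal{F}_s$ and the martingale property of $\tilde{\beta}$, so that the subtracted identity is a genuine renormalized formulation to which the uniqueness result is applicable.
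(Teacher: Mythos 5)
Your proposal is correct and follows essentially the same route as the paper: the paper likewise rewrites the renormalized identity for $u(\cdot,r,u_r)$ so that the integrals run from $s$ to $t$, concludes that $u(\cdot,r,u_r)$ restricted to $[s,T]$ is a renormalized solution starting from $u(s,r,u_r)$ at time $s$, and then invokes uniqueness (Theorem~\ref{Theorem 7.1}), relying on the introductory remark of Section~8 for the shifted filtration and Brownian motion. Your additional explicit verification of conditions $(i)$ and $(iii)$ of Definition~\ref{Definition 5.1} is a point the paper leaves implicit, but it does not change the argument.
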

\begin{proof}
The proof is similar to the proof in \cite{PB} or \cite{WLMR}. Let $r \in [0,T]$, $\psi \in \mathcal{C}^{\infty}([r,T] \times \overline{D})$ and $S \in \mathcal{C}^2(\mathbb{R})$ such that $S'$ has compact support with $S'(0)=0$ or $\psi(t,x)=0$ for all $(t,x) \in [r,T] \times \partial D$. Now we fix $s,t \in [0,T]$ with $r \leq s \leq t$ and $u_r \in L^1(\Omega \times D)$ $\mathcal{F}_r$-measurable. Since $ u(\cdot,r,u_r)$ is the unique renormalized solution to \eqref{1} starting in $u_r$ at time $r$ we have
\begin{align*}
&\int_D S(u(t,r,u_r))\psi(t) \,dx \\
= &\int_D S(u_r) \psi(r) \,dx - \int_r^t \int_D S''(u(\tau,r,u_r)) |\nabla u(\tau,r,u_r))|^p \psi \,dx\,d\tau \\
- &\int_r^t \int_D S'(u(\tau,r,u_r)) |\nabla u(\tau,r,u_r)|^{p-2} \nabla u(\tau,r,u_r) \cdot \nabla \psi \,dx\,d\tau\\
+ &\int_r^t \int_D S'(u(\tau,r,u_r)) \psi \Phi \,dx\,d\beta + \int_r^t \int_D S(u(\tau,r,u_r)) \psi_t \,dx\,d\tau 
\\
+ \frac{1}{2} &\int_r^t \int_D  S''(u(\tau,r,u_r)) \psi \Phi^2 \,dx\,d\tau 
\end{align*}
\begin{align*}
= &\int_D S(u(s,r,u_r))\psi(s) \,dx - \int_s^t \int_D S''(u(\tau,r,u_r)) |\nabla u(\tau,r,u_r))|^p \psi \,dx\,d\tau \\
- &\int_s^t \int_D S'(u(\tau,r,u_r)) |\nabla u(\tau,r,u_r)|^{p-2} \nabla u(\tau,r,u_r) \cdot \nabla \psi \,dx\,d\tau\\
+ &\int_s^t \int_D S'(u(\tau,r,u_r)) \psi \Phi \,dx\,d\beta + \int_s^t \int_D S(u(\tau,r,u_r)) \psi_t \,dx\,d\tau 
\\
+ \frac{1}{2} &\int_s^t \int_D  S''(u(\tau,r,u_r)) \psi \Phi^2 \,dx\,d\tau 
\end{align*}
a.s. in $\Omega$. Therefore $u(t,r,u_r)$ is a renormalized solution to \eqref{1} starting in $u(s,r,u_r)$ at time $s$. Uniqueness yields the result.
\end{proof}
\begin{theorem}\label{Theorem 8.2}
Let $u_r \in L^1(\Omega \times D)$, $r \in [0,T]$ be $\mathcal{F}_r$-measurable. The unique renormalized solution $u(t)=u(t,r,u_r)$, $t \in [r,T]$, of \eqref{1} starting in $u_r$ at time $r$ satisfies the Markov property in the following sense:\\
For every bounded and $\mathcal{B}(L^1(D))$-measurable function $G:L^1(D) \to \mathbb{R}$ and all $s,t \in [r,T]$ with $s \leq t$ we have
\begin{align*}
\mathbb{E} [G(u(t))| \mathcal{F}_s](\omega)= \mathbb{E} [G(u(t,s,u(s,r,u_r)(\omega)))]
\end{align*}
for a.e. $\omega \in \Omega$.
\end{theorem}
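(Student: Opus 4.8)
The plan is to combine the flow property (Proposition \ref{Proposition 8.1}) with a freezing argument resting on the independence of the post-$s$ noise from $\mathcal{F}_s$ and on the $L^1$-contraction principle (Theorem \ref{Theorem 7.1}). By Proposition \ref{Proposition 8.1} we have $u(t)=u(t,r,u_r)=u(t,s,u(s,r,u_r))$ a.s., so it suffices to prove that for the $\mathcal{F}_s$-measurable initial datum $v:=u(s,r,u_r)$ one has $\mathbb{E}[G(u(t,s,v))\mid\mathcal{F}_s]=\Psi(v)$ a.s., where $\Psi(w):=\mathbb{E}[G(u(t,s,w))]$ is defined for deterministic $w\in L^1(D)$. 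Evaluating $\Psi$ at $v(\omega)$ then reproduces exactly the right-hand side of the assertion, since $\Psi(v)(\omega)=\mathbb{E}[G(u(t,s,u(s,r,u_r)(\omega)))]$.

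First I would set up the deterministic solution map $w\mapsto u(\cdot,s,w)$ for $w\in L^1(D)$. As recorded in the introduction to Section 8, a solution started at time $s$ is driven by the increments $\tilde{\beta}_\tau=\beta_\tau-\beta_s$, $\tau\in[s,t]$, whose generated $\sigma$-algebra is independent of $\mathcal{F}_s$. Hence for a fixed deterministic $w$ the random variable $G(u(t,s,w))$ is $\sigma(\tilde{\beta}_\tau:\tau\in[s,t])$-measurable and therefore independent of $\mathcal{F}_s$, so that $\mathbb{E}[G(u(t,s,w))\mid\mathcal{F}_s]=\mathbb{E}[G(u(t,s,w))]=\Psi(w)$. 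The contraction of Theorem \ref{Theorem 7.1} gives $\sup_{\tau\in[s,t]}\|u(\tau,s,w_1)-u(\tau,s,w_2)\|_{L^1(D)}\le\|w_1-w_2\|_{L^1(D)}$ a.s., whence $w\mapsto u(t,s,w)$ is $1$-Lipschitz from $L^1(D)$ into $L^1(\Omega;L^1(D))$; combined with an a.s.\ convergent subsequence this shows that $\Psi$ is continuous, hence Borel, whenever $G$ is continuous and bounded.

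Next comes the freezing step. For a simple $\mathcal{F}_s$-measurable datum $v=\sum_i w_i\chi_{A_i}$ with $w_i\in L^1(D)$ and $(A_i)\subset\mathcal{F}_s$ a finite partition of $\Omega$, I would use the pathwise uniqueness contained in Theorem \ref{Theorem 7.1} to identify $u(\cdot,s,v)=u(\cdot,s,w_i)$ a.s.\ on $A_i$, exploiting the locality in $\omega$ of the renormalized formulation, so that both processes solve the equation on $A_i$ with the same initial value $w_i$. Since each $A_i\in\mathcal{F}_s$ is independent of $G(u(t,s,w_i))$, this yields $\mathbb{E}[G(u(t,s,v))\mid\mathcal{F}_s]=\sum_i\chi_{A_i}\Psi(w_i)=\Psi(v)$. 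A general $\mathcal{F}_s$-measurable $v\in L^1(\Omega\times D)$ is approximated in $L^1(\Omega\times D)$ by simple functions $v_n$, and the contraction principle lets me pass to the limit $u(t,s,v_n)\to u(t,s,v)$ in $L^1(D)$ a.s.\ along a subsequence; for $G$ continuous and bounded, dominated convergence then transfers the identity to $v$. Finally I would drop the continuity hypothesis on $G$ by the functional monotone class theorem: the family of bounded $\mathcal{B}(L^1(D))$-measurable $G$ for which the identity holds is a vector space, stable under bounded monotone limits, and contains the multiplicatively closed, measure-determining class $C_b(L^1(D))$, hence all bounded Borel $G$.

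I expect the main obstacle to be the freezing step itself, namely making rigorous that a solution with $\mathcal{F}_s$-measurable initial value coincides $\omega$-wise with the deterministically started solution. This hinges on the locality in $\omega$ of the renormalized formulation together with the pathwise contraction of Theorem \ref{Theorem 7.1}, and on the Borel measurability of the deterministic solution map $w\mapsto u(t,s,w)$, which is what allows $\Psi$ to be defined and then evaluated at the random value $v(\omega)$. Once the freezing identity and the continuity of $\Psi$ for continuous bounded $G$ are in place, the extension to bounded Borel $G$ via the monotone class theorem and the final assembly through Proposition \ref{Proposition 8.1} are routine.
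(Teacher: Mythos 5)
Your proof is correct, and it rests on the same three pillars as the paper's -- the flow property of Proposition \ref{Proposition 8.1}, the independence of the shifted filtration $\tilde{\mathcal{F}}_t$ from $\mathcal{F}_s$, and the pathwise $L^1$-contraction of Theorem \ref{Theorem 7.1} -- but the execution is genuinely different. The paper never re-derives the freezing identity: it checks that $\phi(x,\omega)=u(t,s,x)(\omega)$ is a Carath\'{e}odory map ($\tilde{\mathcal{F}}_t$-measurable in $\omega$ for each fixed $x$, by running the existence Theorem \ref{Theorem 6.2} with the filtration $\tilde{\mathcal{F}}$; continuous in $x$ for a.e.\ $\omega$, by Theorem \ref{Theorem 7.1}), hence jointly $\mathcal{B}(L^1(D))\otimes\tilde{\mathcal{F}}_t$-measurable, and then cites the abstract freezing lemma (Lemma 4.1 in \cite{PB}), which yields the conclusion at once for every bounded Borel $G$. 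You instead reprove that lemma in this concrete setting: simple $\mathcal{F}_s$-measurable data, $\omega$-wise identification on each $A_i$, approximation of general $v$, and a monotone class argument to pass from $\mathcal{C}_b(L^1(D))$ to all bounded Borel $G$. What each buys: the paper's argument is shorter and delegates the measure theory to the cited lemma, while yours is self-contained and makes explicit that everything hinges on the contraction of Theorem \ref{Theorem 7.1} holding $\omega$-wise rather than merely in expectation. Two remarks. First, the step you flag as the main obstacle is in fact immediate: no ``locality of the renormalized formulation'' is needed, because applying Theorem \ref{Theorem 7.1} globally to the pair of solutions with data $v$ and $w_i$ gives $\int_D|u(t,s,v)-u(t,s,w_i)|\,dx\le\int_D|v-w_i|\,dx$ a.s., and the right-hand side vanishes a.s.\ on $A_i$, which is exactly the identification you want. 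Second, for the independence claim you should phrase measurability with respect to the augmented filtration $\tilde{\mathcal{F}}_t$ rather than the raw $\sigma(\tilde{\beta}_\tau,\ \tau\in[s,t])$, and justify it as the paper does: existence can be carried out with the filtration $\tilde{\mathcal{F}}$, and pathwise uniqueness identifies that solution with $u(\cdot,s,w)$; the introductory remarks of Section 8 guarantee that $\tilde{\mathcal{F}}_t$ is still independent of $\mathcal{F}_s$.
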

\begin{proof}
We apply Lemma 4.1 in \cite{PB} (The freezing Lemma). To this end we set for fixed $r,t,s \in [0,T]$ with $r \leq s \leq t$, $u_r \in L^1(\Omega \times D)$ $\mathcal{F}_r$-measurable and a fixed bounded and $\mathcal{B}(L^1(D))$-measurable function $G:L^1(D) \to \mathbb{R}$: $\mathcal{D}= \mathcal{F}_s$, $\mathcal{G}=\tilde{\mathcal{F}}_t$, $E=L^1(D)$, $\mathcal{E}=\mathcal{B}(L^1(D))$, $X=u(s)=u(s,r,u_r)$ and $\psi: L^1(D) \times \Omega \to \mathbb{R}$, $\psi(x,\omega)= G(u(t,s,x)(\omega))$.\\
It is only left to prove that $\psi$ is $\mathcal{B}(L^1(D)) \otimes \tilde{\mathcal{F}}_t$-measurable. Since $G$ is $\mathcal{B}(L^1(D))$ measurable it is left to show that $\phi: L^1(D) \times \Omega \to L^1(D)$, $\phi(x,\omega)= u(t,s,x)(\omega)$ is $\mathcal{B}(L^1(D)) \otimes \tilde{\mathcal{F}}_t-\mathcal{B}(L^1(D))$-measurable. To this end we show that $\phi$ is Carath\'{e}odory, i.e.,
\begin{align*}
&(i)~ \Omega \ni \omega \mapsto \phi(x,\omega) ~\textnormal{is}~ \tilde{\mathcal{F}}_t-\textnormal{mesaurable for all}~x \in L^1(D), \\
&(ii)~ L^1(D) \ni x \mapsto \phi(x,\omega) ~\textnormal{is continuous for almost every}~\omega \in \Omega.
\end{align*}
Since it is possible to choose the filtration $\tilde{\mathcal{F}}_t$ instead of the filtration $(\mathcal{F}_t)_{t \in [s,T]}$, Theorem \ref{Theorem 6.2} yields that for fixed $x \in L^1(D)$ the function $u(t,s,x)$ is $\tilde{\mathcal{F}}_t$-measurable. Moreover, Theorem \ref{Theorem 7.1} yields that the mapping in (ii) is a contraction for almost every $\omega \in \Omega$, especially it is continuous.\\
Now, Lemma 4.1. in \cite{PB} is applicable and yields the assertion.
\end{proof}
For $s,t \in [0,T]$, $s \leq t$ and $x \in L^1(D)$ we set $P_{s,t}: B_b(L^1(D)) \to B_b(L^1(D))$,
\begin{align*}
P_{s,t}(\varphi)(x) = \mathbb{E} [\varphi(u(t,s,x))],
\end{align*}
where $B_b(L^1(D))$ denotes the space of all bounded Borel functions from $L^1(D)$ to $\mathbb{R}$. Moreover, we set $P_t:= P_{0,t}$.\\
\newline
As a consequence of Theorem \ref{Theorem 8.2} we obtain the Chapman-Kolmogorov property:
\begin{corollary}\label{Corollary 8.3}
For $r,s,t \in [0,T]$, $r \leq s \leq t$, $x \in L^1(D)$ and $\varphi \in B_b(L^1(D))$ we have
\begin{align}
P_{r,t}(\varphi)(x)= P_{r,s}(P_{s,t}(\varphi))(x).
\end{align}
\end{corollary}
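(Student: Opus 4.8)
The plan is to derive Chapman--Kolmogorov directly from the Markov property of Theorem~\ref{Theorem 8.2} combined with the tower property of conditional expectation. First I would unwind the definitions: by construction $P_{r,t}(\varphi)(x)=\mathbb{E}[\varphi(u(t,r,x))]$, where $x\in L^1(D)$ is regarded as a deterministic, hence $\mathcal{F}_r$-measurable, initial datum at time $r$. Since $\mathcal{F}_s\subset\mathcal{F}_t$ and $\varphi(u(t,r,x))$ is bounded (as $\varphi\in B_b(L^1(D))$), the tower property gives
\begin{align*}
P_{r,t}(\varphi)(x)=\mathbb{E}\big[\varphi(u(t,r,x))\big]=\mathbb{E}\big[\,\mathbb{E}[\varphi(u(t,r,x))\mid\mathcal{F}_s]\,\big].
\end{align*}

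Next I would apply Theorem~\ref{Theorem 8.2} with $G=\varphi$ and initial datum $u_r=x$ at time $r$, which identifies the inner conditional expectation pointwise as
\begin{align*}
\mathbb{E}[\varphi(u(t,r,x))\mid\mathcal{F}_s](\omega)=\mathbb{E}\big[\varphi\big(u(t,s,u(s,r,x)(\omega))\big)\big]=\big(P_{s,t}\varphi\big)\big(u(s,r,x)(\omega)\big)
\end{align*}
for a.e. $\omega\in\Omega$, where the last equality is merely the definition of $P_{s,t}$ evaluated at the frozen point $u(s,r,x)(\omega)$. Substituting this back into the tower identity and recognising the resulting outer expectation as the definition of $P_{r,s}$ applied to $P_{s,t}\varphi$ yields
\begin{align*}
P_{r,t}(\varphi)(x)=\mathbb{E}\big[(P_{s,t}\varphi)(u(s,r,x))\big]=P_{r,s}\big(P_{s,t}(\varphi)\big)(x),
\end{align*}
which is the claim.

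The only point requiring care is to ensure that $P_{s,t}\varphi$ is again an admissible argument, i.e. that $P_{s,t}\varphi\in B_b(L^1(D))$, so that $P_{r,s}(P_{s,t}\varphi)$ is well defined. Boundedness is immediate since $|P_{s,t}\varphi|\leq\|\varphi\|_{\infty}$. For measurability I would invoke the Carath\'eodory property of the map $(y,\omega)\mapsto u(t,s,y)(\omega)$ established in the proof of Theorem~\ref{Theorem 8.2}: the joint $\mathcal{B}(L^1(D))\otimes\tilde{\mathcal{F}}_t$-measurability of $\phi$ there, combined with the dominated convergence theorem, shows that $y\mapsto\mathbb{E}[\varphi(u(t,s,y))]$ is Borel measurable. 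I expect this measurability bookkeeping to be the only mild obstacle; once it is in place, the identity is a one-line consequence of the Markov property and the tower rule.
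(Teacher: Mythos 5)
Your proof is correct and follows essentially the same route as the paper: unwind the definition of $P_{r,t}$, insert the tower property of conditional expectation with respect to $\mathcal{F}_s$, identify the inner conditional expectation via Theorem~\ref{Theorem 8.2}, and recognise the result as $P_{r,s}(P_{s,t}(\varphi))(x)$. Your extra check that $P_{s,t}\varphi\in B_b(L^1(D))$ (boundedness plus Borel measurability via the joint measurability of $(y,\omega)\mapsto u(t,s,y)(\omega)$ from the proof of Theorem~\ref{Theorem 8.2}) is a sensible piece of added rigor that the paper leaves implicit in its definition of $P_{s,t}$ as a map from $B_b(L^1(D))$ to itself.
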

\begin{proof}
Let $r,s,t \in [0,T]$, $r \leq s \leq t$, $x \in L^1(D)$ and $\varphi \in B_b(L^1(D))$. From Theorem \ref{Theorem 8.2} it follows that
\begin{align*}
P_{r,t}(\varphi)(x) &= \mathbb{E} [\varphi(u(t,r,x))] = \mathbb{E} [ \mathbb{E} [\varphi(u(t,r,x)) |\mathcal{F}_s ]] = \mathbb{E} [ \mathbb{E} [\varphi(u(t,s,u(s,r,x))) ] ] \\
&=\mathbb{E} [ P_{s,t}(\varphi)(u(s,r,x)) ] = P_{r,s}(P_{s,t}(\varphi))(x).
\end{align*}
\end{proof}
\begin{corollary}\label{Corollary 8.4}
For all $s,t \in [0,T]$, $s \leq t$, we get
\begin{align*}
P_{s,t}=P_{0,t-s}.
\end{align*}
In particular, $(P_t)_{t \in [0,T]}$ is a semigroup.
\end{corollary}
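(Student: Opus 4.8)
The plan is to establish the time-homogeneity identity $P_{s,t} = P_{0,t-s}$ first, and then to read off the semigroup property from the Chapman--Kolmogorov relation of Corollary \ref{Corollary 8.3}.

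For the identity I would fix $x \in L^1(D)$ and $s \le t$ in $[0,T]$, and exploit two structural features: the $p$-Laplace operator in \eqref{1} is autonomous, and the increments of $(\beta_t)_{t\in[0,T]}$ are stationary and independent. As in the preamble of this section, the renormalized solution $u(\cdot,s,x)$ starting from $x$ at time $s$ is driven by the shifted Brownian motion $\tilde\beta_\tau = \beta_\tau - \beta_s$. Applying the time shift $\tau \mapsto s+\tau$ and setting $w(\tau) := u(s+\tau, s, x)$ for $\tau \in [0,t-s]$, I would verify that $w$ satisfies on $[0,t-s]$ the renormalized formulation \eqref{reneq190204} of problem \eqref{1} with initial value $x$ at time $0$, now driven by $\hat\beta_\tau := \beta_{s+\tau} - \beta_s$. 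Since $(\hat\beta_\tau)_{\tau \in [0,t-s]}$ is again a Brownian motion with the same law as $(\beta_\tau)_{\tau \in [0,t-s]}$, the processes $w$ and $u(\cdot,0,x)$ solve the same renormalized problem driven by noises of identical law.

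I would then invoke the uniqueness of renormalized solutions (Theorem \ref{Theorem 7.1}): pathwise uniqueness together with the equal law of the driving noise forces the laws of $w(t-s) = u(t,s,x)$ and of $u(t-s,0,x)$ to coincide. Hence, for every $\varphi \in B_b(L^1(D))$ and every $x \in L^1(D)$,
\begin{align*}
P_{s,t}(\varphi)(x) = \mathbb{E}[\varphi(u(t,s,x))] = \mathbb{E}[\varphi(u(t-s,0,x))] = P_{0,t-s}(\varphi)(x),
\end{align*}
which is exactly $P_{s,t} = P_{0,t-s}$. The semigroup property then follows quickly: taking $r=0$, $s=t_1$ and $t = t_1+t_2$ in Corollary \ref{Corollary 8.3} gives $P_{0,t_1+t_2} = P_{0,t_1}\circ P_{t_1,t_1+t_2}$, and the identity just proved rewrites $P_{t_1,t_1+t_2}$ as $P_{0,t_2} = P_{t_2}$, so that $P_{t_1+t_2} = P_{t_1}\circ P_{t_2}$ for all admissible $t_1,t_2$.

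The step I expect to be the main obstacle is making the time-shift argument rigorous at the level of renormalized solutions. One has to check that the shift $\tau \mapsto s+\tau$ carries the whole family of test-function identities \eqref{reneq190204}, together with condition (i) and the energy dissipation condition (iii) for $u(\cdot,s,x)$, over to the corresponding family for an initial-time-$0$ problem, and that the data $\Phi$ is time-homogeneous so that the shifted equation is again literally \eqref{1} rather than a genuinely non-autonomous problem. Once the shifted object is identified as a renormalized solution of the same problem driven by an equal-law noise, the equality of laws is a soft consequence of the pathwise uniqueness from Theorem \ref{Theorem 7.1}.
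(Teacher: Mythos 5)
Your proposal is correct and follows essentially the same route as the paper: a time shift identifying $u(s+\tau,s,x)$ with the renormalized solution driven by $\hat\beta_\tau=\beta_{s+\tau}-\beta_s$, followed by equality of laws of the two solutions and (for the semigroup property) Corollary \ref{Corollary 8.3}. One caveat: the step you describe as a ``soft consequence'' of Theorem \ref{Theorem 7.1} --- deducing equality of \emph{laws} from pathwise uniqueness when the two solutions are driven by different Brownian motions that merely agree in law --- is a Yamada--Watanabe-type implication, not a direct application of the $L^1$-contraction (which compares two solutions driven by the \emph{same} noise on the same probability space); the paper closes exactly this point by citing \cite{O}, Theorem 2.
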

\begin{proof}
Similar as in Proposition \ref{Proposition 8.1} we can show that $u(\tau +s,s,x)=u^{\hat{\beta}}(\tau,0,x)$ for $s \in [0,T]$, $\tau \in [0,T-s]$, where $\hat{\beta}(\tau)= \beta(\tau+s) - \beta(s)$ and $u^{\hat{\beta}}(\tau,0,x)$ is the unique renormalized solution to \eqref{1} with respect to the Brownian motion $\hat{\beta}$ and initial value $x \in L^1(D)$. Since renormalized solutions to \eqref{1} are pathwise unique, they are jointly unique in law (see, e.g., \cite{O}, Theorem 2) and therefore we have
\begin{align*}
P_{s,s+\tau}= P_{0,\tau}.
\end{align*}
Setting $t= \tau + s$ yields the assertion.
\end{proof}
Now we show that $P_{s,t}$ is Feller and $(P_t)_{t \in [0,T]}$ is a Feller semigroup (see, e.g., \cite{GDPJZ}, p. 247):
\begin{proposition}
For all $s,t \in [0,T]$, $s \leq t$ we have $P_{s,t} (\mathcal{C}_b(L^1(D))) \subset \mathcal{C}_b(L^1(D))$.
\end{proposition}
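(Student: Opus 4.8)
The plan is to verify the two defining properties of membership in $\mathcal{C}_b(L^1(D))$ separately: boundedness of $P_{s,t}(\varphi)$ is immediate from the boundedness of $\varphi$, while the continuity of $P_{s,t}(\varphi)$ will be extracted from the pathwise $L^1$-contraction principle of Theorem \ref{Theorem 7.1} together with Lebesgue's dominated convergence theorem. Throughout I regard a deterministic datum $x\in L^1(D)$ as a (trivially $\mathcal{F}_s$-measurable) element of $L^1(\Omega\times D)$, so that the renormalized solution $u(t,s,x)$ is well defined by the starting-time discussion preceding this section.

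For boundedness, I would simply observe that for every $\varphi\in\mathcal{C}_b(L^1(D))$ and every $x\in L^1(D)$,
\[
|P_{s,t}(\varphi)(x)| = \left|\mathbb{E}[\varphi(u(t,s,x))]\right| \leq \|\varphi\|_{\infty},
\]
whence $\|P_{s,t}(\varphi)\|_{\infty}\leq\|\varphi\|_{\infty}$ and $P_{s,t}(\varphi)$ is bounded.

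For continuity, let $(x_n)_n\subset L^1(D)$ with $x_n\to x$ in $L^1(D)$. Applying Theorem \ref{Theorem 7.1} to the renormalized solutions $u(\cdot,s,x_n)$ and $u(\cdot,s,x)$ gives
\[
\int_D |u(t,s,x_n) - u(t,s,x)| \,dx \leq \int_D |x_n - x| \,dx \longrightarrow 0
\]
a.s. in $\Omega$. Hence $u(t,s,x_n)\to u(t,s,x)$ in $L^1(D)$ a.s. in $\Omega$, and by the continuity of $\varphi$ on $L^1(D)$ we obtain $\varphi(u(t,s,x_n))\to\varphi(u(t,s,x))$ a.s. in $\Omega$.

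Finally, the uniform domination $|\varphi(u(t,s,x_n))|\leq\|\varphi\|_{\infty}\in L^1(\Omega)$ permits me to invoke the dominated convergence theorem, yielding
\[
P_{s,t}(\varphi)(x_n) = \mathbb{E}[\varphi(u(t,s,x_n))] \longrightarrow \mathbb{E}[\varphi(u(t,s,x))] = P_{s,t}(\varphi)(x),
\]
so $P_{s,t}(\varphi)$ is continuous and thus belongs to $\mathcal{C}_b(L^1(D))$. I do not expect a genuine obstacle here, as the contraction principle does the heavy lifting; the only point requiring care is that Theorem \ref{Theorem 7.1} delivers a.s. (pathwise) $L^1$-convergence of the solutions rather than merely convergence in law or in mean, and it is precisely this pathwise convergence that allows $\varphi$ to be passed through the limit pointwise in $\omega$ before taking expectations.
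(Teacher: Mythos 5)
Your proof is correct and follows essentially the same route as the paper: boundedness is immediate from $|\varphi|\leq\|\varphi\|_{\infty}$, and continuity comes from the pathwise $L^1$-contraction of Theorem \ref{Theorem 7.1} combined with the continuity of $\varphi$ and Lebesgue's dominated convergence theorem. Your write-up merely makes explicit the intermediate a.s.\ convergence $u(t,s,x_n)\to u(t,s,x)$ in $L^1(D)$, which the paper compresses into a single sentence.
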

\begin{proof}
Let $s,t \in [0,T]$, $s \leq t$ and $\varphi \in \mathcal{C}_b(L^1(D))$. Let $(x_n) \subset L^1(D)$ such that $x_n \to x$ in $L^1(D)$. Theorem \ref{Theorem 7.1} and the continuity of $\varphi$ yields
\begin{align*}
\varphi (u(t,s,x_n)) \to \varphi(u(t,s,x))
\end{align*}
a.e. in $\Omega$. Since $\varphi$ is bounded this convergence is also a convergence in $L^1(\Omega)$ by Lebesgue's Theorem. Therefore we have
\begin{align*}
P_{s,t}(\varphi)(x_n) = \mathbb{E} (\varphi (u(t,s,x_n))) \to \mathbb{E} (\varphi (u(t,s,x))) = P_{s,t}(\varphi)(x).
\end{align*}
Since $|P_{s,t}(\varphi)(x)| \leq \Vert \varphi \Vert_{\infty} < \infty$ for all $x \in L^1(D)$ we may conclude $P_{s,t}(\varphi) \in \mathcal{C}_b(L^1(D))$.
\end{proof}
\begin{proposition}
The family $P_{s,t}$, $s,t \in [0,T]$, $s \leq t$, has the e-property in the sense of \cite{TKSPTZ}, i.e.:\\
For all $\varphi \in \textnormal{Lip}_b(L^1(D))$, $x \in L^1(D)$ and $\epsilon>0$ there exists $\delta >0$ such that for all $z \in B(x, \delta)$ and all $0 \leq s \leq t \leq T$:
\begin{align*}
|P_{s,t}(\varphi)(x) - P_{s,t}(\varphi)(z)| < \epsilon,
\end{align*}
where $\textnormal{Lip}_b(L^1(D))$ denotes the space of all bounded Lipschitz continuous functions from $L^1(D)$ to $\mathbb{R}$.
\end{proposition}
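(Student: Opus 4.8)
The plan is to deduce the e-property directly from the $L^1$-contraction principle of Theorem~\ref{Theorem 7.1}, exploiting the crucial feature that its contraction estimate is uniform in the time variables $s$ and $t$. First I would fix $\varphi \in \textnormal{Lip}_b(L^1(D))$ with Lipschitz constant $L \geq 0$, a point $x \in L^1(D)$ and $\epsilon > 0$. For an arbitrary $z \in L^1(D)$ I view both $x$ and $z$ as deterministic (hence trivially $\mathcal{F}_s$-measurable) initial data at time $s$, so that $u(\cdot,s,x)$ and $u(\cdot,s,z)$ are the corresponding unique renormalized solutions to \eqref{1} in the sense established in the previous sections.

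Applying Theorem~\ref{Theorem 7.1} to these two solutions yields, a.s. in $\Omega$ and for every $t \in [s,T]$, the bound $\|u(t,s,x) - u(t,s,z)\|_{L^1(D)} \leq \|x - z\|_{L^1(D)}$. Using the Lipschitz continuity of $\varphi$ and taking expectation, I would then estimate
\begin{align*}
|P_{s,t}(\varphi)(x) - P_{s,t}(\varphi)(z)| &\leq \mathbb{E}\,|\varphi(u(t,s,x)) - \varphi(u(t,s,z))| \\
&\leq L\,\mathbb{E}\,\|u(t,s,x) - u(t,s,z)\|_{L^1(D)} \\
&\leq L\,\|x - z\|_{L^1(D)}.
\end{align*}
The decisive observation is that the final right-hand side depends neither on $s$ nor on $t$. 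Hence, choosing $\delta := \epsilon/(L+1)$, every $z \in B(x,\delta)$ satisfies $|P_{s,t}(\varphi)(x) - P_{s,t}(\varphi)(z)| \leq L\delta < \epsilon$ simultaneously for all $0 \leq s \leq t \leq T$, which is exactly the e-property.

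There is essentially no analytic obstacle: the content of the assertion is already encoded in the $L^1$-contraction principle, and the argument is a one-line consequence once that principle is available for deterministic initial conditions. The only points requiring a word of care are to note that Theorem~\ref{Theorem 7.1} does apply to the constant initial data $x,z$ (they are $\mathcal{F}_s$-measurable and lie in $L^1(\Omega \times D)$), and to record the uniformity in $(s,t)$ explicitly, since it is precisely this uniformity — rather than any quantitative refinement — that distinguishes the e-property from the mere continuity of $x \mapsto P_{s,t}(\varphi)(x)$ already used in the Feller property above.
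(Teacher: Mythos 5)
Your proposal is correct and coincides with the paper's own argument: both deduce the e-property directly from the $L^1$-contraction of Theorem \ref{Theorem 7.1} applied to the deterministic initial data $x$ and $z$, combined with the Lipschitz bound on $\varphi$ and the observation that the resulting estimate $L\Vert x-z\Vert_1$ is uniform in $s$ and $t$. The only (immaterial) difference is your choice $\delta=\epsilon/(L+1)$ versus the paper's $\delta=\epsilon/L$.
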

\begin{proof}
Let $\varphi \in \textnormal{Lip}_b(L^1(D))$, $x \in L^1(D)$ and $\epsilon>0$ and let $L>0$ be a Lipschitz constant of  $\varphi$. We set $\delta:= \frac{\epsilon}{L}$. Then, for all $z \in B(x, \delta)$ and all $0 \leq s \leq t \leq T$ Theorem \ref{Theorem 7.1} yields
\begin{align*}
|P_{s,t}(\varphi)(x) - P_{s,t}(\varphi)(z)| &= | \mathbb{E} (\varphi(u(t,s,x)) - \varphi(u(t,s,z)))| \\
&\leq L \cdot \mathbb{E} \Vert u(t,s,x) - u(t,s,z) \Vert_1 \\
&\leq L \cdot \Vert x - z \Vert_1 < L \cdot \delta = \epsilon.
\end{align*}
\end{proof}
As in \cite{WLMR} we define for $x \in L^1(D)$
\begin{align*}
\mathbb{P}_x:= P \circ (u(\cdot, 0, x))^{-1},
\end{align*}
i.e., $\mathbb{P}_x$ is the distribution of the unique renormalized solution to \eqref{1} with initial condition $x \in L^1(D)$, defined as a probability measure on $\mathcal{C}([0,T]; L^1(D))$. We equip $\mathcal{C}([0,T]; L^1(D))$ with the $\sigma$-Algebra 
\begin{align*}
\mathcal{G}:= \sigma(\pi_s, ~s \in [0,T])
\end{align*}
and filtration
\begin{align*}
\mathcal{G}_t:= \sigma(\pi_s, ~s \in [0,t]), ~t \in [0,T],
\end{align*}
where $\pi_t: \mathcal{C}([0,T]; L^1(D)) \to L^1(D)$, $\pi_t(w):= w (t)$. Finally we can prove the following property of $\mathbb{P}_x$:
\begin{proposition}
$\mathbb{P}_x$, $x \in L^1(D)$, is a time-homogenous Markov process on $\mathcal{C}([0,T]; L^1(D))$ with respect to the filtration $(\mathcal{G}_t)_{t \in [0,T]}$, i.e., for all $s,t \in [0,T]$ such that $s+t \leq T$ and all $\varphi \in B_b(L^1(D))$ we have
\begin{align}\label{46}
\mathbb{E}_x (\varphi(\pi_{t+s})|\mathcal{G}_s) = \mathbb{E}_{\pi_s} (\varphi(\pi_t))
\end{align}
$\mathbb{P}_x$-a.s., where $\mathbb{E}_x$ and $\mathbb{E}_x( \cdot|\mathcal{G}_s)$ denote the expectation and the conditional expectation with respect to $\mathbb{P}_x$, respectively.
\end{proposition}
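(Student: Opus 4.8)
The plan is to reduce the path-space Markov property \eqref{46} to the Markov property on $\Omega$ already established in Theorem \ref{Theorem 8.2}, exploiting that $\mathbb{P}_x$ is by construction the pushforward of $P$ under the solution map $\Omega\ni\omega\mapsto u(\cdot,0,x)(\omega)\in\mathcal{C}([0,T];L^1(D))$. First I would record the transfer principle: for every bounded $\mathcal{G}$-measurable $\Psi$ on $\mathcal{C}([0,T];L^1(D))$ one has $\mathbb{E}_x[\Psi]=\mathbb{E}[\Psi(u(\cdot,0,x))]$, which for cylinder functions is immediate and extends to all of $\mathcal{G}$ by a monotone class argument. Applying this to $\Psi=\varphi\circ\pi_t$ gives $\mathbb{E}_y(\varphi(\pi_t))=\mathbb{E}[\varphi(u(t,0,y))]=P_{0,t}(\varphi)(y)=P_t(\varphi)(y)$, so that the right-hand side of \eqref{46} equals $P_t(\varphi)(\pi_s)$. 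Hence it suffices to prove
\[
\mathbb{E}_x\bigl(\varphi(\pi_{t+s})\mid\mathcal{G}_s\bigr)=P_t(\varphi)(\pi_s)\qquad\mathbb{P}_x\text{-a.s.}
\]

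By the defining property of conditional expectation, this reduces to showing that for every bounded $\mathcal{G}_s$-measurable $\Psi$,
\[
\mathbb{E}_x\bigl[\varphi(\pi_{t+s})\,\Psi\bigr]=\mathbb{E}_x\bigl[P_t(\varphi)(\pi_s)\,\Psi\bigr].
\]
Pulling both sides back to $\Omega$ through the solution map and writing $\tilde\Psi:=\Psi(u(\cdot,0,x))$, the identity becomes $\mathbb{E}[\varphi(u(t+s,0,x))\,\tilde\Psi]=\mathbb{E}[P_t(\varphi)(u(s,0,x))\,\tilde\Psi]$. The key observation is that, since $u(\cdot,0,x)$ is $(\mathcal{F}_\tau)$-adapted, the restriction of the path to $[0,s]$ generates a sub-$\sigma$-algebra of $\mathcal{F}_s$, so $\tilde\Psi$ is $\mathcal{F}_s$-measurable. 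Conditioning on $\mathcal{F}_s$ and invoking Theorem \ref{Theorem 8.2} (with $r=0$, target time $t+s$) together with the cocycle property of Proposition \ref{Proposition 8.1} yields
\[
\mathbb{E}\bigl[\varphi(u(t+s,0,x))\mid\mathcal{F}_s\bigr](\omega)=\mathbb{E}\bigl[\varphi(u(t+s,s,u(s,0,x)(\omega)))\bigr]=P_{s,t+s}(\varphi)\bigl(u(s,0,x)(\omega)\bigr).
\]
The time-homogeneity from Corollary \ref{Corollary 8.4}, namely $P_{s,t+s}=P_{0,t}=P_t$, turns the right-hand side into $P_t(\varphi)(u(s,0,x)(\omega))$; substituting this, using $\mathcal{F}_s$-measurability of $\tilde\Psi$, and transferring back via $u(s,0,x)(\omega)=\pi_s(u(\cdot,0,x)(\omega))$ gives exactly the desired equality.

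I expect the main obstacle to be the careful bookkeeping between the two filtrations rather than any analytic difficulty: one must justify that $\Psi\circ u(\cdot,0,x)$ is $\mathcal{F}_s$-measurable whenever $\Psi$ is $\mathcal{G}_s$-measurable (which rests on adaptedness of the renormalized solution), and confirm that $y\mapsto P_t(\varphi)(y)$ is Borel measurable so that $P_t(\varphi)(\pi_s)$ is genuinely $\mathcal{G}_s$-measurable. The latter measurability is precisely the content of the Carath\'eodory argument used in the proof of Theorem \ref{Theorem 8.2}. Once these measurability points are in place, the remaining computation is purely formal, being driven entirely by Theorem \ref{Theorem 8.2} and Corollary \ref{Corollary 8.4}.
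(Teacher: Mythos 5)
Your proof is correct, and its probabilistic core---conditioning on $\mathcal{F}_s$, applying Theorem \ref{Theorem 8.2} with $r=0$ and target time $t+s$, and converting $P_{s,t+s}$ into $P_t$ via Corollary \ref{Corollary 8.4}---is exactly the computation the paper performs. Where you genuinely diverge is in the path-space bookkeeping. The paper (i) establishes the $\mathcal{G}_s$-measurability of $\mathbb{E}_{\pi_s}(\varphi(\pi_t))$ by a monotone class argument over $\varphi$ (indicators, linear combinations, monotone limits), and (ii) verifies the conditional-expectation identity only against finite-dimensional cylinder functions $G(\pi_{t_1},\dots,\pi_{t_n})$ with $t_i\leq s$, leaving the extension from cylinder functions to all of $\mathcal{G}_s$ implicit. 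You bypass both steps: the change-of-variables formula for the image measure $\mathbb{P}_y$ identifies the right-hand side as $P_t(\varphi)(\pi_s)$ for every $\varphi\in B_b(L^1(D))$ at once, and you test against arbitrary bounded $\mathcal{G}_s$-measurable $\Psi$, which is legitimate because the pullback of $\mathcal{G}_s$ under the solution map equals $\sigma(u(\tau,0,x):\tau\leq s)\subset\mathcal{F}_s$ by adaptedness---this is the one observation the paper's cylinder-function route does not need, since there the $\mathcal{F}_s$-measurability of the pulled-back test function is evident factor by factor. Both arguments ultimately rest on the same measurability fact, namely that $P_t$ maps $B_b(L^1(D))$ into itself; you correctly trace this to the Carath\'eodory argument in the proof of Theorem \ref{Theorem 8.2}, though note that one still needs the routine extra step of integrating out $\omega$ (Fubini for bounded jointly measurable functions) to pass from joint measurability of $(x,\omega)\mapsto\varphi(u(t,0,x)(\omega))$ to Borel measurability of $x\mapsto P_t(\varphi)(x)$. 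Two minor remarks: your citation of Proposition \ref{Proposition 8.1} is redundant, since Theorem \ref{Theorem 8.2} already encapsulates the flow property; and your transfer principle requires no monotone class argument, being the standard change-of-variables identity for pushforward measures. Overall your route is somewhat tighter than the paper's, trading two monotone-class arguments for one $\sigma$-algebra pullback.
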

\begin{proof}
We start the proof by showing the right-hand side of \eqref{46} to be $\mathcal{G}_s$-measurable. This will be done by applying a so-called monotone class argument. To this end we set
\begin{align*}
\mathcal{H}:= \{ \varphi: L^1(D) \to \mathbb{R}, ~\mathbb{E}_{\pi_s}(\varphi(\pi_t)): \mathcal{C}([0,T]; L^1(D)) \to \mathbb{R} ~\textnormal{is}~ \mathcal{G}_s-\textnormal{measurable} \}
\end{align*}
and we show that $B_b(L^1(D)) \subset \mathcal{H}$. Firstly we mention and prove that $\mathcal{H}$ satisfies the following properties:
\begin{align*}
i&)~ \textnormal{If}~ A \in \mathcal{B}(L^1(D)), ~\textnormal{then}~ \chi_A \in \mathcal{H}, \\
ii&)~ \textnormal{If}~ f,g \in \mathcal{H} ~\textnormal{and}~c\in \mathbb{R}, ~\textnormal{then}~ f+g \in \mathcal{H} ~\textnormal{and}~cf \in \mathcal{H}, \\
iii&) ~\textnormal{If} ~f_n \in \mathcal{H}, ~0 \leq f_n \nearrow f ~\textnormal{and}~ f ~\textnormal{bounded, then} ~f \in \mathcal{H}.
\end{align*}
To i): Let $A \in \mathcal{B}(L^1(D))$. Then we have for arbitrary $w \in C([0,T];L^1(D))$:
\begin{align*}
\mathbb{E}_{\pi_s} (\chi_A(\pi_t))(w) &= \mathbb{E}_{w(s)} (\chi_A(\pi_t)) = \int \chi_A(\pi_t) \, dP_{w(s)} = \mathbb{E} [\chi_A(u(t,0,w(s)))] \\ 
&= P_t(\chi_A)(w(s)).
\end{align*}
Now let $B \in \mathcal{B}(\mathbb{R})$ and set $\tilde{B} := [P_t(\chi_A)]^{-1} (B) \in \mathcal{B}(L^1(D))$. Then
\begin{align*}
[\mathbb{E}_{\pi_s} (\chi_A(\pi_t))]^{-1}(B) &= \{w \in \mathcal{C}([0,T];L^1(D)), ~P_t(\chi_A)(w(s)) \in B\} \\
&= \{w \in \mathcal{C}([0,T];L^1(D)), ~w(s) \in \tilde{B}\} \\
&= (\pi_s)^{-1}(\tilde{B}) \in \mathcal{G}_s.
\end{align*}
To ii): This is obvious since the sum and the product of real valued measurable functions is again measurable.\\
To iii): Let $f_n \in \mathcal{H}$ and $0 \leq f_n \nearrow f$, where $f$ is a bounded function. Then for arbitrary $w \in \mathcal{C}([0,T];L^1(D))$ we have
\begin{align*}
\mathbb{E}_{\pi_s} (f_n(\pi_t))(w) = \mathbb{E} [f_n(u(t,0,w(s)))] \to \mathbb{E} [f(u(t,0,w(s)))] = \mathbb{E}_{\pi_s} (f(\pi_t))(w)
\end{align*}
by the monotone convergence theorem. As a pointwise limit of $\mathcal{G}_s$-measurable functions it follows that $\mathbb{E}_{\pi_s} (f(\pi_t))$ is $\mathcal{G}_s$-measurable.\\
Now, properties i) and ii) yield that $\mathcal{H}$ contains all simple and Borel measurable functions and property iii) yields that $\mathcal{H}$ contains all bounded and Borel measurable functions, i.e., we may conclude $B_b(L^1(D)) \subset \mathcal{H}$. This means that for all $\varphi \in B_b(L^1(D))$ the function $\mathbb{E}_{\pi_s} (\varphi(\pi_t))$ is $\mathcal{G}_s$-measurable.\\
\newline
For the rest of the proof we follow the ideas in \cite{GDPJZ}. Let, for arbitrary $n \in \mathbb{N}$, $G: L^1(D)^n \to \mathbb{R}$ be a bounded $\otimes_{i=1}^{n} \mathcal{B}(L^1(D))$-measurable function and $0 \leq t_1 < ... < t_n \leq s$. Then from Theorem \ref{Theorem 8.2} and Corollary \ref{Corollary 8.4} it follows
\begin{align*}
&\mathbb{E}_x [ G(\pi_{t_1},..., \pi_{t_n}) \varphi(\pi_{t+s})] \\
= &\mathbb{E}[ G(u(t_1,0,x),..., u(t_n,0,x)) \varphi(u(t+s,0,x))] \\
= &\mathbb{E}\bigg[ \mathbb{E}[ G(u(t_1,0,x),..., u(t_n,0,x)) \varphi(u(t+s,0,x)) | \mathcal{F}_s ]\bigg]\\
= &\mathbb{E}\bigg[ G(u(t_1,0,x),..., u(t_n,0,x)) \mathbb{E} [ \varphi(u(t+s,0,x)) |\mathcal{F}_s ]\bigg] \\
= &\mathbb{E}\bigg[ G(u(t_1,0,x),..., u(t_n,0,x)) \mathbb{E} [ \varphi(u(t+s,s,u(s,0,x)))]\bigg] \\
= &\mathbb{E}\bigg[ G(u(t_1,0,x),..., u(t_n,0,x)) \mathbb{E} [ \varphi(u(t,0,u(s,0,x)))]\bigg]\\
= &\mathbb{E}_x\bigg[ G(\pi_{t_1},..., \pi_{t_n}) \mathbb{E} [ \varphi(u(t,0,\pi_s))]\bigg] \\
= &\mathbb{E}_x\bigg[ G(\pi_{t_1},..., \pi_{t_n}) \mathbb{E}_{\pi_s} [ \varphi(\pi_t))]\bigg].
\end{align*}
This yields the assertion.
\end{proof}

\section{Appendix}
\subsection{Proof of Lemma \ref{190212_lem01}}\label{proof}
For $n\in\mathbb{N}$, we define the following disjoint subdivision of $D$:
\[D_n:=\{x\in D \ | \ \operatorname{dist}(x,\partial D)\geq \frac{2}{n}\},\]
\[B_n:=\{x\in D \ | \ \operatorname{dist}(x,\partial D)\leq \frac{1}{n}\}\]
\[H_n:=\{x\in D \ | \ \frac{1}{n}<\operatorname{dist}(x,\partial D)<\frac{2}{n}\}.\]
In particular, $(D_n)_{n\in\mathbb{N}}$ is an increasing sequence of domains in $D$ such that $D_n\subset\subset D_{n+1}\subset D$ for all $n\in\mathbb{N}$ with
\[\bigcup_{n\in\mathbb{N}}D_n=D.\]
We choose a sequence of cutoff functions $(\varphi_n)_{n\in\mathbb{N}}:D\rightarrow\mathbb{R}$ such that $\varphi_n\in C^{\infty}_c(D)$, $0\leq \varphi_n\leq 1$ in $D$, $\varphi_n\equiv 1$ on $D_n$, $\varphi_n\equiv 0$ on $B_n$ and $|\nabla\varphi_n|\leq 2n$  for all $n\in\mathbb{N}$. Let $(\rho_n)_{n\in\mathbb{N}}\subset C^{\infty}_c(\mathbb{R}^d)$ be a sequence of symmetric mollifiers with support in $[-\frac{1}{n},\frac{1}{n}]$. For $n\in\mathbb{N}$ we define the linear operator 
\[\Pi_n:W^{-1,p'}(D)+L^1(D)\rightarrow W^{1,p}_0(D)\cap L^{\infty}(D),\]
\[v\mapsto (\varphi_nv)\ast\rho_n.\] 
We recall that $v\in W^{-1,p'}(D)+L^1(D)$ iff there exist $G\in L^{p'}(D)^d$, $f\in L^1(D)$ such that $v=-\operatorname{div}\,G+f$ in $\mathcal{D}'(D)$ and, according to the multiplication and convolution of distributions (see, e.g., \cite{Wloka}, Def. 1.5., p. 15 and Def. 1.6., p. 20)
\begin{align}\label{190319_01}
\Pi_n(v)(x)&=((\varphi_n f)\ast\rho_n)(x)+\langle -\operatorname{div}\,G, \varphi_n(\cdot)\rho_n(x-\cdot)\rangle_{W^{-1,p'}(D),W^{1,p}_0(D)}\nonumber\\
&=\int_{D}\rho_n(x-y)\varphi_n(y)f(y)\, dy+\int_DG(y)\nabla_y[\rho_n(x-y)\varphi_n(y)]\,dy
\end{align}
for all $x\in\mathbb{R}^d$. From the definition of $\Pi_n$ it follows immediately that $\Pi_n$ is linear and from \eqref{190319_01} we get that $\Pi_n(v)$ is a smooth function with $\Pi_n(v)=0$ on $D^C$ for all $n\in\mathbb{N}$. A straightforward calculation shows that, for arbitrary $v=-\operatorname{div}\, G+f\in W^{-1,p'}(D)+L^1(D)$, there exists a constant $C\geq 0$ not depending on $f$ and $G$ that may depend on $n\in\mathbb{N}$, such that  
\begin{align}\label{190321_01}
&\Vert \Pi_n v\Vert_{W^{1,p}_0(D)\cap L^{\infty}(D)}=\nonumber\\
&\max(\Vert \Pi_n(v)\Vert_{L^{\infty}(D)},\Vert \Pi_n(v)\Vert_{W^{1,p}_0(D)})\leq C (\Vert f\Vert_{L^1(D)}+\Vert -\operatorname{div}\, G\Vert_{W^{-1,p'}(D)})
\end{align} 
and, passing to the infimum over all $f\in L^1(D)$, $G\in L^{p'}(D)^d$ such that $v=f-\operatorname{div}\, G$ in \eqref{190321_01}, we get that $\Pi_n$ is a bounded linear operator from $W^{-1,p'}(D)+L^1(D)$ into $W^{1,p}_0(D)\cap L^{\infty}(D)$ for any $n\in\mathbb{N}$. For $F\in \{W^{1,p}_0(D), L^2(D), L^1(D)\}$ and every $v\in F\subset W^{-1,p'}(D)+L^1(D)$, from the classical properties of the convolution and Young inequality it follows that $\Pi_n\in L(F)$ for any $n\in\mathbb{N}$ and $\Pi_n(v)\rightarrow v$ for $n\rightarrow\infty$ in $F$ for $n\rightarrow\infty$. For arbitrary $v\in W^{-1,p'}(D)+L^1(D)$, 
\[\lim_{n\rightarrow\infty}\Vert \Pi_n(v)-v\Vert_{W^{-1,p'}(D)+L^1(D)}=0\]
iff
\begin{align}\label{190322_06}
\lim_{n\rightarrow\infty}\left(\Vert \Pi_n(f)-f\Vert_{L^1(D)}+\Vert\Pi_n(-\operatorname{div}\, G)-(-\operatorname{div}\, G)\Vert_{W^{-1,p'}(D)}\right)=0
\end{align}
for all $f\in L^1(D)$, $G\in L^{p'}(D)^d$ such that $v=f-\operatorname{div}\, G$. Thus, to conclude the proof, the convergence of $\Pi_n(-\operatorname{div}\, G)$ to $-\operatorname{div}\, G$ for $n\rightarrow\infty$ in $W^{-1,p'}(D)$ for arbitrary $G\in L^{p'}(D)^d$ deserves our attention. For $g\in W^{1,p}_0(D)$, we have
\begin{align*}
&\left|\langle \Pi_n(-\operatorname{div}\, G),g\rangle_{W^{-1,p'}(D),W^{1,p}_0(D)}\right|\\
&=\left|\int_D g(x)\langle-\operatorname{div}\, G,\varphi_n(\cdot)\rho_n(x-\cdot)\rangle_{W^{-1,p'}(D),W^{1,p}_0(D)}\,dx\right|\\
&=\left|\int_D\int_D G(y)\cdot[(\nabla_y\varphi_n(y))\rho_n(x-y)+(\nabla_y\rho_n(x-y))\varphi_n(y)]\,dy \,g(x)\,dx\right|\\
&\leq I_1^n+I_2^n,
\end{align*}
where 
\begin{align*}
I_1^n&=\left|\int_D\int_D G(y)\cdot(\nabla_y\varphi_n(y))\rho_n(x-y)\,dy\,g(x)\,dx\right|,\\
I_2^n&=\left|\int_D\int_D G(y)\cdot(\nabla_y\rho_n(x-y))\varphi_n(y)\,dy \,g(x)\,dx\right|.
\end{align*}
Recalling that $\nabla_y\rho_n(x-y)=-\nabla_x\rho_n(x-y)$ using Fubini's theorem and Young's inequality it follows that
\begin{align}\label{190321_02}
I_2^n&=\left|\int_D\varphi_n(y)G(y)\cdot\int_D\nabla_x\rho_n(x-y)g(x)\,dx\,dy\right|\nonumber\\
&=\left|\int_D\varphi_n(y)G(y)\cdot\nabla_y[\rho_n\ast g](y)\,dy\right|\nonumber\\
&\leq \Vert G\Vert_{L^{p'}(D)^d}\Vert\nabla g\Vert_{L^p(D)}
\end{align}
for all $n\in\mathbb{N}$. Thanks to Fubini's theorem and to the properties of $\nabla\varphi_n$ and using H\"older and Young's inequality we get
\begin{align}
I_1^n&=\left|\int_{H_n}\int_D\rho_n(x-y)g(x)\,dx\,G(y)\cdot(\nabla_y\varphi_n(y))\,dy\right|\nonumber\\
&\leq \int_{H_n}|n(\rho_n\ast g)(y)||G(y)|\,dy\nonumber\\
&\leq \Vert G\Vert_{L^{p'}(H_n)^d}\Vert n(\rho_n\ast g)\Vert_{L^p(H_n)}\nonumber\\
&\leq\Vert G\Vert_{L^{p'}(H_n)^d}\left[\int_{H_n}\left(\frac{|g(y)|}{\frac{1}{n}}\right)^p\,dy\right]^{1/p}.
\end{align}
Recalling that for all $y\in H_n$ we have $\operatorname{dist}(y,\partial D)<\frac{2}{n}$ it follows that 
\begin{align*}
I_1^n\leq 2^p\Vert G\Vert_{L^{p'}(H_n)^d}\left[\int_{D}\left(\frac{|g(y)|}{\operatorname{dist}(y,\partial D)}\right)^p\,dy\right]^{1/p}.
\end{align*}
Now, using Hardy's inequality we conclude that there exists a constant $C\geq 0$ not depending on $n\in\mathbb{N}$ such that
\begin{align}\label{190322_01}
I_1^n\leq C\Vert G\Vert_{L^{p'}(H_n)^d}\Vert \nabla g\Vert_{L^p(D)^d}.
\end{align}
From \eqref{190321_02} and \eqref{190322_01} it follows that 
\begin{align}\label{190322_03}
\Vert \Pi_n(-\operatorname{div}\,G)\Vert_{W^{-1,p'}(D)}\leq \Vert G\Vert_{L^{p'}(D)^d}+C\left(\int_{H_n}|G(y)|^{p'}\,dy\right)^{1/p'}
\end{align}
for all $n\in\mathbb{N}$, and therefore $\Vert \Pi_n(-\operatorname{div}\,G)\Vert_{W^{-1,p'}(D)}$ is bounded with respect to $n\in\mathbb{N}$ for any $v=-\operatorname{div}\,G\in W^{-1,p'}(D)$. The proof of Theorem 4.15 in \cite{LH} yields that
\begin{align*}
\Pi_n(-\operatorname{div}\,G) \to - \operatorname{div}\,G 
\end{align*}
in $\mathcal{D}'(D) = (C_0^{\infty}(D))^*$.
Hence by density of $C_0^{\infty}(D)$ in $W_0^{1,p}(D)$ and boundedness of $\Pi_n(-\operatorname{div}\,G)$ in $W^{-1,p'}(D)$ we get
\begin{align}\label{190322_04}
\lim_{n\rightarrow\infty}\Pi_n(-\operatorname{div}\,G)=-\operatorname{div}\,G
\end{align}
weakly in $W^{-1,p'}(D)$. Finally, we remark that from \eqref{190322_03} we also get
\begin{align}\label{190322_05}
\limsup_{n\rightarrow\infty}\Vert \Pi_n(-\operatorname{div}\,G)\Vert_{W^{-1,p'}(D)}\leq\Vert -\operatorname{div}\,G\Vert_{W^{-1,p'}(D)}.
\end{align}
Now, from \eqref{190322_05} and the uniform convexity of $W^{-1,p'}(D)$ it follows that \eqref{190322_04} holds strongly in $W^{-1,p'}(D)$ and therefore \eqref{190322_06} holds true. In particular, we have obtained $\Pi_n\in L(F)$ and $\Pi_n(v)\rightarrow v$ for $v\in F$ and $n\rightarrow\infty$ in the case $F=W^{-1,p'}(D)$ and $F=W^{-1,p'}(D)+L^1(D)$.

\subsection{The It\^{o} product rule}
In the well-posedness theory of renormalized solutions in the deterministic setting (see, e.g., \cite{DB}), the product rule is a crucial part. In the following lemma, we propose an It\^{o} product rule for strong solutions to \eqref{1}. In the following, we will call a function $f:\mathbb{R}\rightarrow\mathbb{R}$ \textit{piecewise continuous}, iff it is continuous except for finitely many points.

\begin{prop}\label{itoproduct}
For $1<p<\infty$, $u_0$, $v_0\in L^2(\Omega\times D)$ $\mathcal{F}_0$-measurable let $u$ be a strong solution to \eqref{1} with initial datum $u_0$ and $v$ be a strong solution to \eqref{1} with initial datum $v_0$ respectively. 
Then, for any $H\in \mathcal{C}^2_b(\mathbb{R})$ and any $Z\in W^{2,\infty}(\mathbb{R})$ with $Z''$ piecewise continuous such that $Z(0)=Z'(0)=0$
\begin{align}\label{181201_04}
&(Z((u-v)(t)),H(u(t)))_2=(Z(u_0-v_0),H(u_0))_2\nonumber\\
&+\int_0^t\langle \Delta_p(u)-\Delta_p(v),H(u)Z'(u-v)\rangle_{W^{-1,p'}(D),W^{1,p}_0(D)}\, ds\nonumber\\
&+\int_0^t\langle \Delta_p(u),H'(u)Z(u-v)\rangle_{W^{-1,p'}(D),W_0^{1,p}(D)}\, ds+\int_0^t(\Phi H'(u),Z(u-v))_2\,d\beta\nonumber\\
&+\frac{1}{2}\int_0^t\int_D\Phi^2H''(u)Z(u-v)\, dx\,ds
\end{align}
for all $t\in [0,T]$ a.s. in $\Omega$.
\end{prop}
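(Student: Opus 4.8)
The plan is to exploit a structural simplification---subtracting the two equations makes the noise cancel---and then to follow the regularization scheme of the proof of Proposition~\ref{Proposition 4.2}. First I would subtract equation~\eqref{1} for $v$ from the one for $u$. Since both strong solutions carry the \emph{same} It\^o term $\int_0^t\Phi\,d\beta$, this term cancels, so the difference $w:=u-v$ satisfies the pathwise (noise-free) equation
\begin{align*}
w(t)-(u_0-v_0)=\int_0^t\bigl(\Delta_p(u)-\Delta_p(v)\bigr)\,ds,\qquad \Delta_p(u):=\operatorname{div}\,(|\nabla u|^{p-2}\nabla u),
\end{align*}
in $W^{-1,p'}(D)$, whereas $u$ still satisfies $du=\Delta_p(u)\,dt+\Phi\,d\beta$. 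This is the decisive observation: in the It\^o product rule for $Z(w)H(u)$, both the quadratic variation of $Z(w)$ and the cross-variation of $Z(w)$ with $H(u)$ vanish, so the only second-order correction is the one produced by $H(u)$, which is precisely the final term in~\eqref{181201_04}.

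Next I would regularize exactly as in Proposition~\ref{Proposition 4.2}: with $(\Pi_n)$ from Lemma~\ref{190212_lem01} set $u_n:=\Pi_n(u)$, $w_n:=\Pi_n(w)$, and apply $\Pi_n$ to both equations above. For each fixed $x\in D$ this yields two real-valued It\^o processes $t\mapsto u_n(t,x)$ and $t\mapsto w_n(t,x)$, smooth in $x$, the second with vanishing diffusion coefficient. For fixed $x$ I would then apply the classical finite-dimensional It\^o product rule to $Z(w_n(t,x))H(u_n(t,x))$; because $w_n$ has no Brownian part, the only surviving It\^o correction is $\tfrac12 Z(w_n)H''(u_n)\Pi_n(\Phi)^2$. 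Integrating over $D$ and using the (stochastic) Fubini theorem produces the analogue of~\eqref{181201_04} with $u,v,w$ replaced by their regularizations and the dual pairings replaced by spatial integrals.

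Finally I would pass to the limit $n\to\infty$ using the convergence properties of $\Pi_n$ in Lemma~\ref{190212_lem01}, following the limit passages of Proposition~\ref{Proposition 4.2}: the terminal and initial terms converge since $u_n(t)\to u(t)$ and $w_n(t)\to w(t)$ in $L^1(D)$ while $H$ is bounded and $Z$ is Lipschitz; the stochastic term converges by the It\^o isometry; and the correction term converges from $\Pi_n(\Phi)^2\to\Phi^2$ in $L^1$ together with the boundedness and continuity of $H''$. The hypotheses $Z(0)=Z'(0)=0$ enter precisely here: since $u_n=v_n=0$ on $\partial D$, also $w_n=0$ there, so $Z(w_n)=Z'(w_n)=0$ on $\partial D$, which guarantees $H'(u_n)Z(w_n)\in W^{1,p}_0(D)$ and $H(u_n)Z'(w_n)\in W^{1,p}_0(D)$ and legitimizes the pairings $\langle\,\cdot\,,\,\cdot\,\rangle_{W^{-1,p'}(D),W^{1,p}_0(D)}$.

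The main obstacle is the convergence of these two test-function factors in $L^p(0,t;W^{1,p}_0(D))$, which is needed to pass to the limit in the two drift pairings against $\Pi_n(\Delta_p(u)-\Delta_p(v))\to\Delta_p(u)-\Delta_p(v)$ and $\Pi_n(\Delta_p(u))\to\Delta_p(u)$ in $W^{-1,p'}(D)$. By the Sobolev chain rule,
\begin{align*}
\nabla\bigl[H(u_n)Z'(w_n)\bigr]=H'(u_n)Z'(w_n)\nabla u_n+H(u_n)Z''(w_n)\nabla w_n,
\end{align*}
and the difficulty is that $Z''$ is merely piecewise continuous. This is handled just as for the piecewise continuous $S''$ in Proposition~\ref{Proposition 4.2}: on each level set $\{w=c\}$ one has $\nabla w=0$ a.e., so the finitely many jump points of $Z''$ do not contribute, and dominated convergence together with the uniform bound $\|\Pi_n\|_{L(W^{1,p}_0(D))}\le C_U$ from the Uniform Boundedness Principle yields $H(u_n)Z'(w_n)\to H(u)Z'(w)$ and $Z(w_n)H'(u_n)\to Z(w)H'(u)$ in $L^p(0,t;W^{1,p}_0(D))$ a.s. in $\Omega$. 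Combining all the limits gives~\eqref{181201_04}.
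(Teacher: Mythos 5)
Your proposal is correct and follows essentially the same route as the paper's proof: subtracting the two equations so that the noise cancels, regularizing with $\Pi_n$ from Lemma \ref{190212_lem01}, applying the pointwise It\^{o} product rule for fixed $x\in D$ (the quadratic and cross variations involving $u-v$ vanishing since it has no martingale part), integrating over $D$, and passing to the limit term by term, with $Z(0)=Z'(0)=0$ securing membership of the test factors in $W^{1,p}_0(D)$. The only minor deviation is technical: where you establish \emph{strong} convergence of $H(u_n)Z'(u_n-v_n)$ and $H'(u_n)Z(u_n-v_n)$ in $L^p(0,t;W^{1,p}_0(D))$ via a.e.\ convergence, domination and the level-set argument for the piecewise continuous $Z''$, the paper only proves boundedness and identifies the weak limit through strong convergence in $L^p((0,t)\times D)$, pairing this weak convergence with the strong convergence of $\Pi_n(\Delta_p(u)-\Delta_p(v))$ in $L^{p'}(0,t;W^{-1,p'}(D))$ --- both arguments suffice.
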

\begin{proof}
We fix $t\in [0,T]$. Since $u$, $v$ are strong solutions to \eqref{1}, it follows that
\begin{align}\label{181201_03}
u(t)=u_0+\int_0^t\Delta_p(u)\,ds+\int_0^t\Phi\,d\beta,
\end{align}
\begin{align*}
v(t)=v_0+\int_0^t\Delta_p(v)\,ds+\int_0^t\Phi\,d\beta
\end{align*}
and consequently
\begin{align}\label{181201_02}
(u-v)(t)=u_0-v_0+\int_0^t \Delta_p(u)-\Delta_p(v)\,ds
\end{align}
holds in $L^2(D)$, a.s. in $\Omega$. 
For $n\in\mathbb{N}$ we define $\Pi_n$ according to Lemma \ref{190212_lem01} and set $\Phi_n:=\Pi_n(\Phi)$, $u^{n}_0:=\Pi_n(u_0)$,  $v^{n}_0:=\Pi_n(v_0)$, $u_{n}:=\Pi_n(u)$, $v_{n}:=\Pi_n(v)$, $U_n:=\Pi_n(\Delta_p(u))$, $V_n:=\Pi_n(\Delta_p(v))$. Applying $\Pi_n$ on both sides of \eqref{181201_02} yields
\begin{align}\label{181203_01}
(u_{n}-v_{n})(t)=u^{n}_0-v^{n}_0+\int_0^t U_n-h_{n}\,ds
\end{align}
and applying $\Pi_n$ on both sides of \eqref{181201_03} yields
\begin{align}\label{181203_02}
u_{n}(t)=u_0^{n}+\int_0^t g_{n}\,ds+\int_0^t \Phi_n \,d\beta\nonumber\\
\end{align}
in $W^{1,p}_0(D)\cap L^2(D)\cap \mathcal{C}^{\infty}(\overline{D})$ a.s. in $\Omega$.
The pointwise It\^{o} formula in \eqref{181203_01} and \eqref{181203_02} leads to
\begin{align}\label{181212_01}
Z(u_{n}-v_n)(t)=Z(u^{n}_0-v_0^{n})+\int_0^t (U_n-h_{n})Z'(u_n-v_n)\,ds
\end{align}
and
\begin{align}\label{181212_02}
H(u_{n})(t)=H(u^{n}_0)+\int_0^t g_{n}H'(u_{n})\,ds
+\int_0^t\Phi_n H'(u_n)\,d\beta+\frac{1}{2}\int_0^t\Phi_n^2H''(u_{n})\, ds
\end{align}
in $D$, a.s. in $\Omega$. From \eqref{181212_01}, \eqref{181212_02} and the  product rule for It\^{o} processes, which is just an easy application of the classic two-dimensional It\^{o} formula (see, e.g., \cite{PB}, Proposition 8.1, p. 218), applied pointwise in $t$ for fixed $x\in D$ it follows that
 \begin{align}\label{181212_03}
&Z(u_{n}-v_{n})(t)H(u_{n})(t)=Z(u^{n}_0-v^{n}_0)H(u^{n}_0)\nonumber\\
 &+\int_0^t (g_{n}-V_n)Z'(u_n-v_n)H(u_n)\,ds+\int_0^t U_n H'(u_n)Z(u_n-v_n)\,ds\nonumber\\
 &+\int_0^t \Phi_n H'(u_n)Z(u_n-v_n)\,d\beta+\frac{1}{2}\int_0^t\Phi_n^2H''(u_n)Z(u_n-v_n)\,ds
\end{align}
in $D$, a.s. in $\Omega$. Integration  over $D$ in \eqref{181212_03} yields
\begin{align}\label{181212_04}
I_1=I_2+I_3+I_4+I_5+I_6
\end{align}
where
\begin{align*}
I_1&=(Z((u_n-v_n)(t)),H((u_n)(t))_2, \nonumber\\
I_2&=(Z(u^{n}_0-v^{n}_0),H(u^n_0))_2, \nonumber\\
I_3&=\int_0^t\int_D(g_{n}-V_n)Z'(u_n-v_n)H(u_n)\,dx\,ds, \nonumber\\
I_4&=\int_0^t\int_D U_n H'(u_n)Z(u_n-v_n)\,dx\,ds, \nonumber\\
I_5&=\int_0^t (\Phi_n H'(u_n),Z(u_n-v_n))_2\,d\beta, \nonumber\\
I_6&=\frac{1}{2}\int_0^t\int_D\Phi_n^2 H''(u_n)Z(u_n-v_n)\,dx\,ds
\end{align*}
a.s. in $\Omega$. For any fixed $s\in [0,t]$ and almost every $\omega\in \Omega$, $u_n(\omega,s)\rightarrow u(\omega,s)$ and $v_n(\omega,s)\rightarrow v(\omega,s)$ for $n\rightarrow\infty$ in $L^2(D)$. Since $Z$, $H$, $H'$ are continuous and bounded functions, it follows that
\begin{align}\label{181212_05}
\lim_{n\rightarrow\infty}I_1=(Z((u-v)(t)),H'(u(t))_2,
\end{align}
\begin{align}\label{181212_06}
\lim_{n\rightarrow\infty}I_2=(Z(u_0-v_0),H'(u_0))_2
\end{align}
in $L^2(\Omega)$ and a.s. in $\Omega$. 
Note that
\begin{align*}
I_3=\int_0^t\langle(U_n-V_n),Z'(u_n-v_n)H(u_n)\rangle_{W^{-1,p'}(D),W_0^{1,p}(D)}\,ds
\end{align*}
a.s. in $\Omega$ and from the properties of $\Pi_n$ it follows that
\[\lim_{n\rightarrow\infty}U_n(\omega,s)-V_n(\omega,s)=\Delta_p(u(\omega,s))-\Delta_p(v(\omega,s))\]
in $W^{-1,p'}(D)$ for all $s\in [0,t]$ and a.e. $\omega\in\Omega$. Recalling the convergence result for $(\Pi_n)$ from Lemma \ref{190212_lem01}, there exists a constant $C_1\geq 0$ not depending on $s,\omega$ and $n\in\mathbb{N}$ such that
\begin{align*}
\Vert U_n(\omega,s)-V_n(\omega,s)\Vert_{W^{-1,p'}(D)}&=\Vert \Pi_n(\Delta_p(u(\omega,s))-\Delta_p(v(\omega,s)))\Vert_{W^{-1,p'}(D)}\\
&\leq C_1 \Vert \Delta_p(u(\omega,s))-\Delta_p(v(\omega,s))\Vert_{W^{-1,p'}(D)}.
\end{align*}
Since the right-hand side of the above equation is in $L^{p'}(\Omega\times (0,t))$, from Lebesgue's dominated convergence theorem it follows that
\[\lim_{n\rightarrow\infty} U_n-V_n=\Delta_p(u)-\Delta_p(v)\]
in $L^{p'}(\Omega\times (0,t);W^{-1,p'}(D))$ and, with a similar reasoning, also in $L^{p'}(0,t;W^{-1,p'}(D))$ a.s. in $\Omega$. From the chain rule for Sobolev functions it follows that
\begin{align}\label{190213_01}
\nabla(Z'(u_n-v_n)H(u_n))=Z''(u_n-v_n)\nabla(u_n-v_n)H(u_n)+Z'(u_n-v_n)H'(u_n)\nabla u_n
\end{align}
a.s. in $(0,t)\times \Omega$. Moreover, there exists a constant $C_2=C_2(\Vert Z'\Vert_{\infty},\Vert Z''\Vert_{\infty},\Vert H\Vert_{\infty},\Vert H'\Vert_{\infty})\geq 0$ such that
\begin{align}\label{181218_01}
\int_0^t \Vert \nabla(Z'(u_n-v_n)H(u_n))\Vert_p^p \, ds\leq C_2\int_0^t(\Vert\nabla u\Vert^p_p+\Vert\nabla v\Vert^p_p)\, ds
\end{align}
a.s. in $\Omega$. Consequently, for almost every $\omega\in\Omega$ there exists $\chi(\omega)\in L^p(0,t;W^{1,p}_0(D))$ such that, passing to a not relabeled subsequence that may depend on $\omega\in\Omega$, 
\begin{align}\label{190212_02}
Z'(u_n-v_n)H(u_n)\rightharpoonup\chi(\omega)
\end{align}
weakly in $L^p(0,t; W^{1,p}_0(D))$. Since in addition, 
\[\lim_{n\rightarrow\infty}Z'(u_n-v_n)H(u_n)\rightarrow Z'(u-v)H(u)\]
in $L^p((0,t)\times D)$ a.s. in $\Omega$, we get
\begin{align}\label{190212_03}
\chi(\omega)=Z'(u-v)H(u)
\end{align}
in $L^p(0,t;W^{1,p}_0(D))$ a.s. in $\Omega$ and the weak convergence in \eqref{190212_02} holds for the whole sequence. Therefore,
\[Z'(u_n-v_n)H(u_n)\rightharpoonup Z'(u-v)H(u)\]
for $n\rightarrow\infty$ weakly in $L^p(0,t;W^{1,p}_0(D))$ for almost every $\omega\in\Omega$. Resuming the above results it follows that
\begin{align}\label{181218_03}
\lim_{n\rightarrow\infty}I_3=\int_0^t \langle \Delta_p(u)-\Delta_p(v) ,Z'(u-v)H(u)\rangle_{W^{-1,p'}(D),W_0^{1,p}(D)}\,ds
\end{align}
a.s. in $\Omega$. With analogous arguments we get
\begin{align}\label{181212_12}
\lim_{n\rightarrow\infty}I_4=\int_0^t\langle \Delta_p(u),H'(u)Z(u-v)\rangle_{W^{-1,p'}(D),W_0^{1,p}(D)}\, ds
\end{align}
a.s. in $\Omega$. By It\^{o} isometry,
\begin{align*}
&\mathbb{E}\left|\int_0^t\int_D\Phi_n H'(u_n)Z(u_n-v_n)-\Phi H'(u)Z(u-v)\,dx\,d\beta\right|^2\\
&=\mathbb{E}\int_0^t\int_D |\Phi_n H'(u_n)Z(u_n-v_n)-\Phi H'(u)Z(u-v)|^2\,dx\,ds.
\end{align*}
From the convergence
\[\Phi_n H'(u_n)Z(u_n-v_n)\rightarrow
\Phi H'(u)Z(u-v)\]
in $L^2(D)$ for $n\rightarrow\infty$ a.s. in $\Omega\times (0,t)$ and since, for almost any $(\omega,s)$, there exists a constant $C_3\geq 0$ not depending on the parameters $n,s,\omega$ such that
\[\Vert \Phi_n(\omega,s) H'(u_n(\omega,s))Z(u_n(\omega,s)-v_n(\omega,s))\Vert_2\leq C_3\Vert \Phi(\omega,s)\Vert_2\]
for all $n\in\mathbb{N}$, a.s. in $\Omega\times (0,t)$, it follows that
\[\lim_{n\rightarrow\infty}\Phi_n H'(u_n)Z(u_n-v_n)=
\Phi H'(u)Z(u-v)\]
in $L^2(\Omega\times(0,t)\times D)$ and consequently  
\begin{align}\label{181212_13}
\lim_{n\rightarrow\infty} I_5=\int_0^t\int_D\Phi H'(u)Z(u-v)\,dx\,d\beta
\end{align}
in $L^2(\Omega)$ and, passing to a subsequence if necessary, also a.s. in $\Omega$. According to the properties of $(\Pi_n)$, $\Phi_n^2\rightarrow\Phi^2$ in $L^1((0,t)\times D)$ for $n\rightarrow\infty$ a.s. in $\Omega$. From the boundedness and the continuity of $H''$ and $Z$ we get
\[\lim_{n\rightarrow\infty} H''(u_n)Z(u_n-v_n)=H''(u)Z(u-v)\]
in $L^{q}((0,t)\times D)$ for all $1\leq q<\infty$ and weak-$\ast$ in  $L^{\infty}((0,t)\times D)$ a.s. in $\Omega$, thus it follows that
\begin{align}\label{181213_01}
\lim_{n\rightarrow\infty}I_6=\frac{1}{2}\int_0^t\int_D\Phi^2 H''(u)Z(u-v)\,dx\,ds
\end{align}
a.s. in $\Omega$. Passing to a subsequence if necessary, taking the limit in \eqref{181212_03} for $n\rightarrow\infty$ a.s. in $\Omega$ 
the assertion follows from \eqref{181212_05}-\eqref{181213_01}.
\end{proof}
\begin{cor}\label{cor1}
Proposition \ref{itoproduct} still holds true for $H\in W^{2,\infty}(\mathbb{R})$ such that $H''$ is piecewise continuous.
\end{cor}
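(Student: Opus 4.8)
The plan is to deduce Corollary \ref{cor1} from Proposition \ref{itoproduct} by approximating $H$ by functions in $\mathcal{C}^2_b(\mathbb{R})$ and passing to the limit in \eqref{181201_04}. I would fix a symmetric mollifier $(\rho_\varepsilon)_{\varepsilon>0}\subset \mathcal{C}^\infty_c(\mathbb{R})$ and set $H_\varepsilon:=H\ast\rho_\varepsilon$. Since $H\in W^{2,\infty}(\mathbb{R})$, convolution commutes with differentiation, so $H_\varepsilon\in\mathcal{C}^\infty(\mathbb{R})$ with $H_\varepsilon^{(l)}=H^{(l)}\ast\rho_\varepsilon$ and $\|H_\varepsilon^{(l)}\|_\infty\le\|H^{(l)}\|_\infty$ for $l=0,1,2$; in particular $H_\varepsilon\in\mathcal{C}^2_b(\mathbb{R})$. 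As $H$ and $H'$ are Lipschitz (because $H''\in L^\infty$), one has $H_\varepsilon\to H$ and $H_\varepsilon'\to H'$ uniformly on $\mathbb{R}$, while $H_\varepsilon''\to H''$ at every continuity point of $H''$, hence everywhere except on the finite jump set $N$, with $|H_\varepsilon''|\le\|H''\|_\infty$. For each $\varepsilon$, Proposition \ref{itoproduct} applied to the pair $(H_\varepsilon,Z)$ yields \eqref{181201_04} with $H$ replaced by $H_\varepsilon$, and it remains to let $\varepsilon\to0$ in the six terms.

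For the two inner-product terms, $H_\varepsilon(u(t))\to H(u(t))$ and $H_\varepsilon(u_0)\to H(u_0)$ in $L^2(D)$ by uniform convergence and boundedness, and since $Z((u-v)(t)),Z(u_0-v_0)\in L^2(D)$ (using $|Z(r)|\le\|Z'\|_\infty|r|$), these terms converge to the asserted limits. For the two elliptic pairings with the fixed functions $\Delta_p(u)-\Delta_p(v),\Delta_p(u)\in L^{p'}(0,t;W^{-1,p'}(D))$ (a.s.\ in $\Omega$), I would prove strong convergence of the test functions $H_\varepsilon(u)Z'(u-v)$ and $H_\varepsilon'(u)Z(u-v)$ in $L^p(0,t;W_0^{1,p}(D))$ a.s. By the chain rule,
\[
\nabla[H_\varepsilon(u)Z'(u-v)]=H_\varepsilon'(u)Z'(u-v)\nabla u+H_\varepsilon(u)Z''(u-v)\nabla(u-v),
\]
\[
\nabla[H_\varepsilon'(u)Z(u-v)]=H_\varepsilon''(u)Z(u-v)\nabla u+H_\varepsilon'(u)Z'(u-v)\nabla(u-v).
\]
The first right-hand side converges in $L^p$ by dominated convergence, since every $H_\varepsilon$-factor converges uniformly and the remaining (fixed) factors are the $L^p$-functions already present in \eqref{181201_04}. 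In the second right-hand side the only factor converging merely a.e.\ and boundedly is $H_\varepsilon''(u)$, but it multiplies $\nabla u$; since $\nabla u=0$ a.e.\ on every level set $\{u=a\}$, in particular on $\{u\in N\}$, we have $H_\varepsilon''(u)\nabla u\to H''(u)\nabla u$ a.e., and dominated convergence again gives $L^p$-convergence. Thus both pairings pass to the limit.

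For the stochastic integral the It\^{o} isometry gives
\[
\mathbb{E}\Big|\int_0^t(\Phi(H_\varepsilon'(u)-H'(u)),Z(u-v))_2\,d\beta\Big|^2=\mathbb{E}\int_0^t\big|(\Phi(H_\varepsilon'(u)-H'(u)),Z(u-v))_2\big|^2\,ds,
\]
which tends to $0$ by dominated convergence ($H_\varepsilon'\to H'$ uniformly, $|H_\varepsilon'|\le\|H'\|_\infty$), so the stochastic terms converge in $L^2(\Omega)$ and, along a subsequence, a.s. The remaining second-order correction $\tfrac12\int_0^t\int_D\Phi^2H_\varepsilon''(u)Z(u-v)\,dx\,ds$ is the delicate term and the one I expect to be the main obstacle: here $H_\varepsilon''(u)$ is \emph{not} multiplied by $\nabla u$, so the level-set argument is unavailable and $H_\varepsilon''(u)$ converges only on $\{u\notin N\}$. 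Its integrand is dominated by $\tfrac12\|H''\|_\infty\Phi^2|Z(u-v)|$, the integrable function that also controls the corresponding term of \eqref{181201_04}, so dominated convergence handles the contribution of $\{u\notin N\}$. The only possible defect is on $\{u\in N\}$, where $H_\varepsilon''(u)$ tends to the symmetric average of the one-sided limits of $H''$. Since $N$ is finite and modifying $H''$ on $N$ leaves $H\in W^{2,\infty}$ unchanged, I would close the argument by taking for $H''$ the symmetric-average representative at its jumps, with which \eqref{181201_04} holds verbatim (equivalently, by discarding the null sets $\{u=a\}$, $a\in N$). Passing to a common subsequence for the a.s.\ statements and collecting the limits of all six terms then gives \eqref{181201_04} for the given $H$.
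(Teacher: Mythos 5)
Your overall strategy is the same as the paper's: approximate $H$ by a sequence in $\mathcal{C}^2_b(\mathbb{R})$ with uniform $W^{2,\infty}$-bounds and pass to the limit in each of the six terms of \eqref{181201_04}; the limit passages you spell out (uniform convergence for the $H$- and $H'$-terms, strong $L^p(0,t;W^{1,p}_0(D))$-convergence of the test functions against the fixed functionals $\Delta_p(u)-\Delta_p(v)$, $\Delta_p(u)$, It\^{o} isometry plus a subsequence for the stochastic term, dominated convergence for the correction term) are exactly what the paper's three-line proof leaves implicit. The one genuine difference is the choice of approximating sequence, and it is not cosmetic. The paper asserts a sequence $(H_\delta)$ with $H''_\delta\to H''$ pointwise \emph{everywhere} in $\mathbb{R}$; this is achievable precisely because $H''$ has only finitely many discontinuities, so near each jump $a$ one can interpolate continuously through the prescribed value $H''(a)$ instead of mollifying. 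With such a sequence every term converges to the term built from the \emph{given} representative of $H''$, and no question about the jump set $N$ ever arises. Your symmetric mollification converges only off $N$ (and to the symmetric average on $N$), which forces you to (a) invoke $\nabla u=0$ a.e.\ on level sets for the terms carrying $H''_\varepsilon(u)\nabla u$ --- correct, and necessary in your setup --- and (b) commit to the symmetric-average representative of $H''$ in the It\^{o} correction term. Point (b) means you prove the corollary for one particular representative, while the paper's version holds for an arbitrary piecewise continuous representative. This matters downstream: Lemma \ref{Lemma 5.6} and Lemma \ref{Lemma 6.3} apply the product rule to $(H_k^{\delta})''$ whose values at the jumps $|r|=k$, $|r|=k+\tfrac1\delta$ are one-sided, not symmetric averages. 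Your version still suffices for those applications, but only after the additional remark that there every occurrence of $H''$ that survives the estimates is multiplied by $\nabla u_n$ or $|\nabla u_n|^p$, so the representative is irrelevant for them; alternatively, one can note that the paper's stronger version, applied to two representatives, shows a posteriori that $\int_0^t\int_D\chi_{\{u=a\}}\Phi^2 Z(u-v)\,dx\,ds=0$, i.e.\ the formula is in fact representative-independent --- but neither you nor the paper proves this directly, so the distinction has to be tracked.

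One statement in your proof is wrong and should be deleted: the parenthetical claim that the sets $\{u=a\}$, $a\in N$, are null sets in $\Omega\times Q_T$. Nothing gives this; for instance, with $u_0=0$ and $\Phi\equiv 0$ the strong solution of \eqref{1} is $u\equiv 0$, so $\{u=0\}$ has full measure. It is exactly because such level sets can be large that your argument needs the Sobolev-level-set identity $\nabla u=0$ a.e.\ on $\{u=a\}$ for the gradient terms and a definite choice of representative for the $\Phi^2$-term. Since the remark is offered only as an ``equivalent'' reformulation and your actual fix (fixing the symmetric-average representative) does not rely on it, removing it does not damage the proof --- but as written it asserts a false equivalence.
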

\begin{proof}
There exists an approximating sequence $(H_{\delta})_{\delta>0}\subset \mathcal{C}^2_b(\mathbb{R})$ such that $\Vert H_{\delta}\Vert_{\infty}\leq \Vert H\Vert_{\infty}$, $\Vert H'_{\delta}\Vert_{\infty}\leq \Vert H'\Vert_{\infty}$, $\Vert H''_{\delta}\Vert_{\infty}\leq \Vert H''\Vert_{\infty}$ for all $\delta>0$ and $H_{\delta}\rightarrow H$, $H'_{\delta}\rightarrow H'$ uniformly on compact subsets, $H_{\delta}''\rightarrow H''$ pointwise in $\mathbb{R}$ for $\delta\rightarrow 0$. With this convergence we are able to pass to the limit with $\delta\rightarrow 0$ in \eqref{181201_04}.
\end{proof}

\end{document}